\pgfplotsset{compat=newest}
\DeclareMathAlphabet{\bbold}{U}{bbold}{m}{n}  
\newcommand{\id}{\ensuremath{\bbold{1} }}     
\newtcolorbox{bluebox}[1][]%
{left=0mm, right=0mm, bottom=0mm, top=0mm, sharp corners, boxrule=.8pt, before skip=\topsep, after skip=\topsep, colback=cyan!5, colframe=cyan, coltitle=black, fonttitle=\bfseries, title=#1, breakable}
\newcommand{\abs}[1]{\left| #1\right|}
\theoremstyle{plain}
\newtheorem{theorem}{Theorem}[section]
\newtheorem*{theorem*}{Theorem}
\newtheorem{proposition}[theorem]{Proposition}
\newtheorem*{lemma*}{Lemma}
\newtheorem{corollary}[theorem]{Corollary}
\theoremstyle{definition}
\newtheorem{remark}[theorem]{Remark}
\newtheorem*{remark*}{Remark}
\newtheorem{example}[theorem]{Example}
\newcommand{\R}{\mathbb{R}}
\newcommand{\C}{\mathbb{C}}
\newcommand{\N}{\mathbb{N}}
\newcommand{\mD}{\mathcal{D}}
\newcommand{\mH}{\mathcal{H}}
\newcommand{\mL}{\mathcal{L}}
\newcommand{\mT}{\mathcal{T}}
\newcommand{\rd}{\mathrm d}
\newcommand{\mfs}{\mathfrak s}
\newcommand{\nuext}{\nu_{\mathrm{ext} }}
\newcommand{\nuint}{\nu_{\mathrm{int} }}
\newcommand{\Msup}{\widehat m}
\newcommand{\Mlim}{M}
\providecommand{\norm}[1]{\left\lVert#1\right\rVert}
\newcommand{\scalar}[2]{\langle #1, #2\rangle}
\newcommand{\hp}[2]{\left\langle #1, #2\right\rangle}
\providecommand{\ceil}[1]{\left\lceil#1\right\rceil}
\providecommand{\floor}[1]{\left\lfloor#1\right\rfloor}
\DeclareMathOperator{\re}{Re}
\DeclareMathOperator{\sgn}{sgn}
\DeclareMathOperator{\linspan}{span}
\newcommand{\define}[1]{\emph{#1}}
\newcommand{\essspec}{\sigma_{\mathrm{ess}}}    
\newcommand{\discspec}{\sigma_{\mathrm{disc}}}  
\newcommand{\pointspec}{\sigma_{\mathrm{p}}}  
\newcommand{\acspec}{\sigma_{\mathrm{ac}}}
\begin{document}

\title{On the existence of eigenvalues of a one-dimensional Dirac operator}
\author{Daniel Sánchez-Mendoza\thanks{ETSI Navales, Universidad Politécnica de Madrid, España. Email: daniel.sanchezmen@upm.es},
Monika Winklmeier\thanks{Departamento de Matem\'aticas, Universidad de los Andes, Bogot\'a.}
}
\maketitle

\begin{abstract}
    The aim of this paper is to study the existence of eigenvalues in the gap of the essential spectrum of the one-dimensional Dirac operator in the presence of a bounded potential.
    We employ a generalized variational principle to prove existence of such eigenvalues, estimate how many eigenvalues there are, and give upper and lower bounds for them.
\end{abstract}

\section{Introduction} 

In this paper we study the one-dimensional Dirac operator given by the differential expression
\begin{equation}
\label{eq:DiracDefIntro}
    \tau  = 
   \begin{pmatrix}
      0 & \frac{\rd}{\rd x} \\
      -\frac{\rd}{\rd x}  & 0
   \end{pmatrix}
   + V(x)
   =
   \begin{pmatrix}
      0 & \frac{\rd}{\rd x} \\
      -\frac{\rd}{\rd x}  & 0
   \end{pmatrix}
   +
   \begin{pmatrix}
      M_1(x) & W(x) \\
      W(x) & -M_2(x)
   \end{pmatrix}
\end{equation}
for $x\in\R$ or $x\in \R_+$. Such operators are used in quantum mechanics to describe fermionic particles.
Our main assumptions are that $M_1,\, M_2$ and $W$ are real valued bounded measurable functions whose limits for $|x|\to \infty$ exist, 
and that $\inf\{M_1\} > \sup\{-M_2\}$.
It is well known that the minimal operator associated with $\tau$ on $\R$ is essentially selfadjoint.
If we consider $\tau$ on the halfline $\R_+$, then it is symmetric and has defect indices $(1,1)$, so we need a boundary condition at $0$ in order to obtain a selfadjoint operator, see Proposition~\ref{prop:defT}. Let us denote by $T$ the selfadjoint realisation of $\tau$ on $\R$. Then the eigenvalues of the matrix $\lim_{x\to\pm\infty} V(x)$ determine the essential spectrum of the Dirac operator which turns out to be of the form $\essspec(T) = (-\infty, \lambda_{e-}] \cup [\lambda_{e+}, \infty)$, while the gap $(-m_2, m_1)$ between the ranges of $M_1$ and $-M_2$ belongs to its resolvent set $\rho(T)$, see Proposition~\ref{prop:essspecT} and Proposition~\ref{prop:EssspecResT}.
Therefore, discrete eigenvalues of $T$ may appear only in $(\lambda_{e-}, -m_2]\cup [m_1, \lambda_{e+})$.
Also for selfadjoint realisations of $\tau$ on $\R_+$ with boundary condition parameter $\alpha$, denoted $T_{\alpha+}$, the essential spectrum is determined by the limit of $V$ at infinity.
Moreover, $(-m_2, m_1)\subseteq\rho(T_{\alpha+})$
provided that $\alpha\in\{0,\pi/2\}$. For a different choice of $\alpha$ this is not necessarily true, 
see for instance Remark~\ref{rem:EigenvalueAlpha}.
\smallskip

Operators given by \eqref{eq:DiracDefIntro} appear, for example, as the radial part of the three-dimensional Dirac operator with a radial symmetric potential after a separation ansatz.
More precisely, the so called partial wave operators are given by, see~\cite[Thm. 4.14]{thaller},
\begin{equation*}
    h_{m_j, \kappa_j}
    =
    \begin{pmatrix}
        mc^2 + \Phi_{sc}(x) + \Phi_{el}(x) & c\{- \frac{\rd}{\rd x} + \frac{\kappa_j}{x} \} + \Phi_{am}(x) \\[1ex]
        c\{ \frac{\rd}{\rd x} + \frac{\kappa_j}{x} \} + \Phi_{am}(x)  & -mc^2 - \Phi_{sc}(x) + \Phi_{el}(x) 
    \end{pmatrix}
\end{equation*}
for $x\in \R_+$.
The parameters $j\in \{\frac{1}{2},\, \frac{3}{2},\, \dots\}$, $m_j \in \{-j,\, -j+1,\, \dots, j\}$, $\kappa_j = \pm (j+\frac{1}{2})$ stem from the separation ansatz and describe the angular moment.
The constants $m$ and $c$ are the mass of the particle and the velocity of light.
The functions $\Phi_{sc}$, $\Phi_{el}$,  $\Phi_{am}$ are interpreted as scalar potential, electric potential and the anomalous magnetic moment.
If we set $c=1$ and apply the unitary transformation 
$U = \begin{psmallmatrix} 1 & 0 \\ 0 & -1 \end{psmallmatrix}$
gives $Uh_{m_j, \kappa_j} U^{*} = \tau$
with
$M_1(x) = m + \Phi_{sc}(x) + \Phi_{el}(x)$,
$M_2(x) = m + \Phi_{sc}(x) - \Phi_{el}(x)$,
$W(x) = -\frac{\kappa_j}{x} - \Phi_{am}(x)$.
Since in this work we always assume that $M_j$ and $W$ are bounded, we can deal with this operator only in the unphysical situation when $\kappa = 0$. We will remove the boundness hypothesis of $M_j$ and $W$ in a future work so we can handle the physically relevant cases.
\smallskip

Operators of the form \eqref{eq:DiracDefIntro} arise also in the description of Dirac fermions in graphene, see for instance \cite{JakubskyKrejcirik2014} and references therein.
Jakubsk\'{y} and Krej\v{c}i\v{r}\'{\i}k studied their spectrum under the assumption that $M_1(x) = M_2(x) = M\geq0$. 
In this case, $M$ and $\pm \frac{\rd}{\rd x} + W$ commute and the square of the Dirac operator is a diagonal operator.
If $W$ is differentiable, then the entries on the diagonal are the Schr\"odinger operators
$H_\pm = M^2 +(\pm\frac{\rd}{\rd x} + W)(\mp\frac{\rd}{\rd x} + W) = M^2 - \frac{\rd^2}{\rd x^2} + W^2 \pm W'$.
With the help of the min-max principle for semibounded operators, they could show existence of eigenvalues of $H_\pm$ and then conclude the existence of eigenvalues of the Dirac operator in the gap of its essential spectrum.
See also Remark~\ref{rem:ADequalMM}.
\smallskip

Recently, there has been interest in one-dimensional Dirac operators which are not derived from the Dirac operator in higher dimensions, see \cite{DasguptaKhuranaTahvildarZadeh2023}.  
The authors consider a one-dimensional Dirac fermion on the real line with a screened Coulomb potential and study its eigenvalues in the gap of its essential spectrum.
In their case, $W(x)=0$, $M_1(x) = m - e\Phi(x)$, $M_2(x) = m + e\Phi(x)$ where
$\Phi(x) = \frac{Q}{2}\mu e^{-|x|/\mu}$.
They transform the Dirac eigenvalue equation expression into an autonomous dynamical system and show that for suitable choices of the physical constants, eigenvalues appear in the gap of the essential spectrum.
\smallskip

In the work \cite{KramerKrejcirik2024} the authors study the Dirac operator $T_{\alpha+}$ on the halfline $\R_+$ with a possibly unbounded and complex potential $V(x)=\begin{psmallmatrix}M&0\\0&-M\end{psmallmatrix}+Q(x)$ for some $M>0$, and ask when it does not have eigenvalues. 
They find sufficient conditions on $Q$ and $\alpha$ which guarantee that $T_{\alpha,+}$ does not have eigenvalues.
While in our work we need the boundary value parameter 
$\alpha$ to be equal to $0$ or $\frac{\pi}{2}$ so that the variational principle works, 
the possible values of $\alpha$ that are allowed in \cite{KramerKrejcirik2024} are $\alpha \in (0, \pi/2)$. 
The reason for this is that their method relies on an 
explicit formula for the resolvent of the free Dirac operator $(T_{\alpha,+})|_{Q=0}$ which does not hold otherwise.
In fact, the free Dirac operator has a discrete eigenvalue for $\alpha\in(\frac{\pi}{2}, \pi)$.
See Proposition~\ref{prop:defT} and Remark~\ref{rem:EigenvalueAlpha}.
\smallskip

In order to obtain information on the discrete spectrum of a linear semibounded operator, very often a min-max or max-min principle is used. We now state these classical variational principles, for the proof see \cite[Thm. 12.1]{schmuedgen} or \cite[Thm.~XIII.2]{ReedSimon4}. 
Let $A$ be a lower semibounded selfadjoint linear operator on a Hilbert space $H$ with associated sesquilinear form $\mathfrak a$ and set $\lambda_e \coloneqq\inf \essspec(A)$. Then $\sigma(A)\cap(-\infty,\lambda_e)$ consists of at most countably many eigenvalues of finite multiplicities which may accumulate only at $\lambda_{e}$. Let $(\lambda_n)_{n=1}^\infty$ be the increasing sequence of these eigenvalues counted with multiplicity, extended by $\lambda_{e}$ if there are only finitely many. Then
\begin{equation}
   \label{eq:classicMinMax}
   \lambda_n = 
   \min_{ \substack{ L\subseteq\mD(\mathfrak a) \\ \dim L = n} }
   \max_{ \substack{ x\in L \\ x\neq 0} } p(x)
   =
   \max_{ \substack{ L\subseteq H \\ \dim L = n-1 }}
   \min_{ \substack{ x\in \mD(\mathfrak a)\cap L^\perp \\ x\neq 0} }
   p(x),\qquad n\in \N,
\end{equation}
where $p(x)$ is the Rayleigh quotient
\begin{equation*}
    p(x) \coloneqq \frac{\mathfrak a[x,x]}{\|x\|^2},
    \qquad x\in \mD(\mathfrak a)\setminus\{0\}.
\end{equation*}
\begin{remark}
   For every $\lambda\in\R$ we can define the quadratic form 
   \begin{equation}
   \label{eq:classicalzeros}
       \mathfrak a(\lambda)[x] \coloneqq \mathfrak a[x,x] - \lambda \|x\|^2,\qquad x\in \mD(\mathfrak a)\setminus\{0\}.
   \end{equation}
   Clearly, for fixed $x\in \mD(\mathfrak a)\setminus\{0\}$, the function $\lambda\mapsto a(\lambda)[x]$
   is decreasing and has a unique zero equal to $p(x)$.
   Hence the classical variational principles in \eqref{eq:classicMinMax} can be written as min-max or max-min over the zeros of the function in \eqref{eq:classicalzeros}.
\end{remark}
However, the classical variational principles do not work for the discrete spectrum in a gap of the essential spectrum of a selfadjoint operator. This is because the min-max or max-min expression in \eqref{eq:classicMinMax} will always be less than or equal to the infimum of the essentials spectrum. In particular, for our Dirac operator these expression will always give $-\infty$.
\smallskip

We employ a generalized variational principle in order to obtain information on the eigenvalues of the Dirac operator that are within the essential spectral gap, see \eqref{eq:def:mun} and Theorem~\ref{thm:varprinciple:abstract}. For the generalized variational principle to be applicable, the operator must be a block operator matrix whose domain is the direct sum of subspaces. This condition is fulfilled by the operator $T$ on $\R$, as well as $T_{\alpha+}$ on $\R_+$ if $\alpha\in\{0,\pi/2\}$.
Formally the generalized variational principle looks like the classical one but instead of the zeros of \eqref{eq:classicalzeros} the zeros of a more complicated functional appear in \eqref{eq:classicMinMax}.
Such principles have been investigated in \cite{BEL2000}, \cite{EL04}.
Applications to Dirac operators can be found e.g. in \cite{KLT04}, \cite{winklmeier}.
See also the book \cite{tretter}.
\smallskip

In Section~\ref{sec:SetUp} we will define the one-dimensional Dirac operator on $\R$ and on the halflines $\R_\pm$, set our assumptions on the potential, and fix notation.
In Section~\ref{sec:varprinciple} we state the variational principle in an abstract setting and obtain a first sufficient condition for the existence of at least one eigenvalue in the essential spectral gap (Corollary~\ref{cor:abstractEVexistencis}).
Moreover, the special case of constant $M_1 = M_2$ is discussed in Proposition~\ref{prop:ADequalMM}.
In Sections~\ref{sec:DiracRealLine} and \ref{sec:DiracHalfLine} we derive from the abstract variational principle estimates on the number and location of eigenvalues in the essential spectral gap of the Dirac operator on $\R$ and on $\R_+$.
The main theorems are Theorem~\ref{thm:teoremafavoritoglobal} where the eigenvalues of the Dirac operator are compared to those of an associated Sturm-Liouville operator,
and Theorem~\ref{thm:teoremafavorito} which can be considered a localized version of the previous one.
In particular, it shows that deep enough valleys in $W$ produce eigenvalues of the Dirac operators.
We test our results on a toy example that can be solved analytically (Example~\ref{ex:Analytic}) and on the one-dimensional hydrogenic Dirac operator studied in \cite{DasguptaKhuranaTahvildarZadeh2023}. 
We find that our numerical results agree very well with theirs.
\smallskip

{\bfseries Notation.}
   Let $A$ be a closed linear operator defined on a Hilbert space $H$.
   We denote its domain, resolvent and spectrum by $\mD(A)$, $\rho(A)$ and $\sigma(A)$. We use $\pointspec(A)$ and $\discspec(A)$ for its point and discrete spectrum; and for its essential spectrum we use the definition from \cite[Sec. XVII.2]{GohGolMarKaa90}
   \begin{align*}
   \essspec(A)\coloneqq\{\lambda\in\C : A-\lambda \text{ is not Fredholm}\}.
   \end{align*}
   If $A$ is selfadjoint, the spectral subspace associated to an interval $I$ is denoted by $\mathcal L_{I}A$.
   For a pencil of closed linear operators $(S(\lambda))_{\lambda\in\Omega}$, where $\Omega$ an open subset of $\C$, we define its resolvent set and its spectrum by
   \begin{align*}
    \rho(S) &= \{\lambda\in\Omega : 0\in \rho(S(\lambda)) \},
    \\
    \sigma(S) &= \{\lambda\in\Omega : 0\in \sigma(S(\lambda)) \}.
   \end{align*}
   The sets $\pointspec(S)$, $\discspec(S)$ and $\essspec(S)$ are defined similarly.

\section{The one-dimensional Dirac operator}
\label{sec:SetUp}

In this section we define the one-dimensional Dirac operator given by
\begin{equation*}
   \tau  = 
   \begin{pmatrix}
      0 & \frac{\rd}{\rd x} \\
      -\frac{\rd}{\rd x}  & 0
   \end{pmatrix}
   + V(x)
   =
   \begin{pmatrix}
      0 & \frac{\rd}{\rd x} \\
      -\frac{\rd}{\rd x}  & 0
   \end{pmatrix}
   +
   \begin{pmatrix}
      M_1(x) & W(x) \\
      W(x) & -M_2(x)
   \end{pmatrix}
    \end{equation*}
on $\R$ and on $\R_\pm$, as well as some relevant quantities for its spectrum.
We will always assume the following conditions for the Dirac operator defined on $\R$, with the obvious modifications for $\R_\pm$:

\begin{enumerate}[label={\upshape (A\arabic*)}]

   \item\label{item:A1}
   $M_1, M_2, W$ are bounded measurable real valued functions.

   \item\label{item:A2}
   $-m_2 < m_1$ where
   $m_i \coloneqq \displaystyle \inf_{x\in\R} M_i(x)$\ for $i=1,2$.

   \item\label{item:A3}
   The following limits exist:
   \begin{equation*}
      W_{\pm} \coloneqq \lim_{x\to\pm\infty} W(x),
      \quad
      \Mlim_{j\pm} \coloneqq \lim_{x\to\pm\infty} M_j(x).
   \end{equation*}
\end{enumerate}

We set
\begin{equation*}
   V_\pm \coloneqq \lim_{x\to\pm\infty} V(x)=
   \begin{pmatrix}
      \Mlim_{1\pm} & W_\pm \\
      W_\pm & -\Mlim_{2\pm}
   \end{pmatrix}.
\end{equation*}
Clearly, the eigenvalues of $V_+$ are
\begin{equation*}
   \lambda_+^{\pm} = \frac{\Mlim_{1+} - \Mlim_{2+}}{2} \pm \sqrt{ \left( \frac{\Mlim_{1+} + \Mlim_{2+}}{2} \right)^2 + W_+^2}\ ,
\end{equation*}
and those of $V_-$ are 
\begin{equation*}
   \lambda_-^{\pm} = \frac{\Mlim_{1-} - \Mlim_{2-}}{2} \pm \sqrt{ \left( \frac{\Mlim_{1-} + \Mlim_{2-}}{2} \right)^2 + W_-^2}\ .
\end{equation*}
From these eigenvalues we define
\begin{equation}\label{eq:def:lambda_e}
    \lambda_{e-} \coloneqq \max\{ \lambda_+^-,\  \lambda_-^- \},
    \qquad
    \lambda_{e+} \coloneqq \min\{ \lambda_+^+,\  \lambda_-^+ \},
\end{equation}
and note that \ref{item:A2} and \eqref{eq:def:lambda_e} imply that 
\begin{equation}
\label{eq:lambdainequalities}
   \lambda_\pm^-\leq\lambda_{e-} \leq -m_2 <m_1\leq \lambda_{e+}\leq \lambda_\pm^+.
\end{equation}
In the special case $M_1 = M_2$, the above simplifies to
\begin{equation*}
    \lambda_+^\pm = \pm\sqrt{ \Mlim_{1+}^2 + W_+^2 },
    \qquad 
    \lambda_-^\pm = \pm\sqrt{ \Mlim_{1-}^2 + W_-^2 }
\end{equation*}
and 
$\lambda_{e+} = -\lambda_{e-} = \min\left\{ \sqrt{ \Mlim_{1-}^2 + W_-^2 }\, ,\, \sqrt{ \Mlim_{1+}^2 + W_+^2 } \right\}$.
\smallskip

Now we define $\tau$ as an operator on the Hilbert spaces $L_2(\R)^2$ and $L_2(\R_\pm)^2$
\begin{proposition}
   \label{prop:defT}
   Assume that 
   \ref{item:A1}, \ref{item:A2}, \ref{item:A3} hold.
   \begin{enumerate}[label={\upshape (\roman*)}]
      \item
      The operator 
      \begin{equation*}
	 T = 
      \begin{pmatrix}
        0 & \frac{\rd}{\rd x} \\
        -\frac{\rd}{\rd x} & 0
	 \end{pmatrix}
	 + V
    =
	 \begin{pmatrix}
	    M_1 & \frac{\rd}{\rd x} + W \\
	    -\frac{\rd}{\rd x} + W & -M_2
	 \end{pmatrix}
      \end{equation*}
      with domain
      \begin{equation*}
	 \mD(T) = \left\{ 
	 \Psi \in L_2(\R)^2 :
	 \Psi\text{ is absolutely continuous, } \tau\Psi \in L_2(\R)^2
	 \right\}
	 = H^1(\R)^2
      \end{equation*}
      is selfadjoint and all its eigenvalues are simple.

      \item 
      For every $\alpha\in [0,\pi)$ the operator 
      \begin{equation*}
	 T_{\alpha\pm} = 
	 \begin{pmatrix}
        0 & \frac{\rd}{\rd x} \\
        -\frac{\rd}{\rd x} & 0
	 \end{pmatrix}
	 + V
    =
	 \begin{pmatrix}
	    M_1 & \frac{\rd}{\rd x} + W \\
	    -\frac{\rd}{\rd x} + W & -M_2
	 \end{pmatrix}
      \end{equation*}
      with domain
      \begin{align*}
	 \mD(T_{\alpha\pm}) 
	 & = \left\{ 
	 \Psi \in L_2(\R_{\pm})^2 :
      \begin{aligned}
	    & \Psi\text{ is absolutely continuous, } \tau\Psi \in L_2(\R_{\pm})^2,\\
	    & \Psi(0) = c
	    \begin{pmatrix}
	    \cos\alpha \\ \sin\alpha
	    \end{pmatrix}\text{ for some } c\in\C
      \end{aligned}
	 \right\}
	 \\[2ex]
	 & = \left\{ 
	 \Psi \in H^1(\R_{\pm})^2 :
	 \Psi(0) = c
	 \begin{pmatrix}
	    \cos\alpha \\ \sin\alpha
	 \end{pmatrix}\text{ for some } c\in\C
	 \right\}
      \end{align*}
      is selfadjoint and all its eigenvalues are simple.
      All selfadjoint realizations of $\tau$ on $\R_\pm$ are of this form.

   \end{enumerate}

\end{proposition}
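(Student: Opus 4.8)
The plan is to write $\tau$ as $T_0+V$, where $T_0$ is the free Dirac operator (the block matrix differential operator appearing in the statement with $V\equiv 0$) and $V$ is the matrix multiplication operator, to treat the boundary behaviour at $0$ on $\R_\pm$ via Weyl's limit-point/limit-circle theory, and to deduce simplicity of the eigenvalues from a Wronskian identity. \emph{Self-adjointness on $\R$.} First I would verify that $T_0$ with domain $H^1(\R)^2$ is selfadjoint: after a Fourier transform $T_0$ becomes multiplication by the Hermitian matrix $\begin{psmallmatrix}0 & \mathrm{i}\xi\\ -\mathrm{i}\xi & 0\end{psmallmatrix}$, whose natural multiplication domain is exactly the Fourier image of $H^1(\R)^2$; equivalently, the constant unitary $\tfrac{1}{\sqrt{2}}\begin{psmallmatrix}1 & 1\\ \mathrm{i} & -\mathrm{i}\end{psmallmatrix}$ diagonalises $T_0$ into two copies of $\pm\mathrm{i}\,\tfrac{\rd}{\rd x}$, each selfadjoint on $H^1(\R)$. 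By \ref{item:A1} the matrix $V(x)$ is real symmetric for a.e.\ $x$ and uniformly bounded, so $V$ is a bounded selfadjoint operator, and the Kato--Rellich theorem (relative bound $0$) gives that $T=T_0+V$ is selfadjoint on $\mD(T_0)=H^1(\R)^2$. The identification $\{\Psi\in L_2(\R)^2:\Psi\text{ absolutely continuous},\ \tau\Psi\in L_2(\R)^2\}=H^1(\R)^2$ is then automatic: the left-hand conditions force $V\Psi\in L_2(\R)^2$, hence $T_0\Psi=\tau\Psi-V\Psi\in L_2(\R)^2$, i.e.\ $\Psi'\in L_2(\R)^2$.

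\emph{Self-adjointness on $\R_\pm$.} By the reflection $x\mapsto -x$ it suffices to treat $\R_+$. The endpoint $0$ is regular, so near $0$ every solution of $(\tau-z)\Psi=0$ is square-integrable. Near $+\infty$, assumption \ref{item:A3} turns the eigenvalue equation into an asymptotically constant first-order system $Y'=A_+(z)Y+o(1)Y$, and one checks that $\tr A_+(z)=W_+-W_+=0$ while $\det A_+(z)=\det(V_+-z)\neq 0$ for $z\in\C\setminus\R$; thus $A_+(z)$ has eigenvalues $\pm\mu(z)$ with $\mu(z)\neq 0$ and nonzero real part, the system has an exponential dichotomy for large $x$, and in particular it possesses a solution growing exponentially, which is not square-integrable near $+\infty$. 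Hence $\tau$ is in the limit-point case at $+\infty$, the minimal operator associated with $\tau$ on $L_2(\R_+)^2$ has deficiency indices $(1,1)$, and by Glazman--Krein--Naimark its selfadjoint realisations correspond bijectively to the Lagrangian (one-dimensional isotropic) subspaces of the boundary form produced by integration by parts, $\langle\tau\Psi,\Phi\rangle-\langle\Psi,\tau\Phi\rangle=\psi_1(0)\overline{\phi_2(0)}-\psi_2(0)\overline{\phi_1(0)}$ (the boundary terms at $+\infty$ vanish because elements of the maximal domain lie in $H^1$ near $+\infty$). The condition $\Psi(0)\in\linspan\{(\cos\alpha,\sin\alpha)^{\top}\}$ defines a one-dimensional subspace of $\C^2$ on which this form vanishes, hence a Lagrangian one, so $T_{\alpha+}$ is selfadjoint; conversely, since the form evaluated at $v\in\C^2$ equals $2\mathrm{i}\,\im(v_1\overline{v_2})$, every one-dimensional isotropic subspace is spanned by a real vector, i.e.\ equals $\linspan\{(\cos\alpha,\sin\alpha)^{\top}\}$ for a unique $\alpha\in[0,\pi)$, which yields the last assertion of (ii). The $H^1$-description of $\mD(T_{\alpha\pm})$ follows exactly as on $\R$.

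\emph{Simplicity of the eigenvalues.} Let $\Psi=(\psi_1,\psi_2)^{\top}$ and $\Phi=(\phi_1,\phi_2)^{\top}$ be eigenfunctions of $T$ (resp.\ of $T_{\alpha\pm}$) for the same eigenvalue $\lambda$. Rewriting $\tau\Psi=\lambda\Psi$ as a first-order linear system $Y'=A(x;\lambda)Y$, its coefficient matrix has $\tr A(x;\lambda)=W(x)-W(x)=0$, so Liouville's formula shows that the Wronskian $w(x)=\psi_1(x)\phi_2(x)-\psi_2(x)\phi_1(x)$ is constant. On $\R$ eigenfunctions lie in $H^1(\R)^2\subseteq C_0(\R)^2$, so $w(x)\to 0$ as $x\to+\infty$ and hence $w\equiv 0$; on $\R_\pm$, $w(0)=0$ because $\Psi(0)$ and $\Phi(0)$ are both multiples of $(\cos\alpha,\sin\alpha)^{\top}$, so again $w\equiv 0$. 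Thus the vectors $\Psi(x)$ and $\Phi(x)$ are linearly dependent for every $x$; choosing $x_0$ with $\Phi(x_0)\neq 0$ and invoking uniqueness for the ODE gives that $\Psi$ is a scalar multiple of $\Phi$, so every eigenspace is one-dimensional.

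\emph{Main obstacle.} The only delicate step is the limit-point classification at $\pm\infty$, i.e.\ deducing from the mere convergence $V(x)\to V_\pm$ that not every solution of $(\tau-z)\Psi=0$ is square-integrable near infinity; this requires a Levinson-type asymptotic-integration (exponential-dichotomy) argument for first-order systems, which can also be quoted from the standard literature on Dirac-type differential operators. Everything else is routine.
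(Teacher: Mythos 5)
Your proof is correct, but it takes a genuinely different and much more hands-on route than the paper, which disposes of the whole proposition by citing Weidmann's theory of Dirac systems: limit point at $\pm\infty$ by \cite[Thm.~6.8]{weidmann1987} (which immediately yields simplicity of eigenvalues for any selfadjoint realisation), uniqueness of the selfadjoint realisation on $\R$ by \cite[Thm.~5.7]{weidmann1987} together with maximality of $T$, and the classification of the boundary conditions at the regular endpoint $0$ by \cite[Thm.~5.8]{weidmann1987}. Where you use Kato--Rellich with the bounded symmetric perturbation $V$ of the free operator $T_0$ (diagonalised by Fourier transform), the paper uses the limit-point/limit-circle machinery even on the whole line; where you classify the boundary conditions by computing the boundary form $\psi_1(0)\overline{\phi_2(0)}-\psi_2(0)\overline{\phi_1(0)}$ and identifying its one-dimensional isotropic subspaces with the real lines $\linspan\{(\cos\alpha,\sin\alpha)^\top\}$, the paper quotes the ready-made parametrisation; and where you prove simplicity via the constant Wronskian (trace-free coefficient matrix, vanishing at $\infty$ or at $0$), the paper gets it for free from the limit-point property. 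Your approach buys self-containedness and makes the mechanism visible (e.g.\ why exactly the real boundary vectors exhaust all selfadjoint conditions), at the cost of the one step you rightly flag as delicate: your exponential-dichotomy argument for the limit-point case at $\pm\infty$ genuinely uses \ref{item:A3} and the nonvanishing of $\re\mu(z)$, and needs Coppel-type roughness of dichotomies rather than the plain $L_1$ version of Levinson's theorem, since $V(x)-V_\pm$ is only $o(1)$; the theorem the paper cites is stronger and cleaner here, asserting that one-dimensional Dirac systems are \emph{always} in the limit point case at infinity, with no asymptotic hypothesis on $V$ at all. All the individual computations you give (the trace-free first-order system, $\det A_\pm(z)=\det(V_\pm-z)\neq 0$ for $z\notin\R$, the signature of the boundary form, the identification of the maximal domain with $H^1$) check out.
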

\begin{proof}
By \cite[Thm. 6.8]{weidmann1987} $\tau$ is in the limit point case at $\pm\infty$, which implies that any selfadjoint realization on $\R$ or $\R_\pm$ has simple eigenvalues.
\smallskip

Note that the operator $T$ defined above is the maximal symmetric operator associated to $\tau$ on $\R$. By \cite[Thm. 5.7]{weidmann1987}
$\tau$ admits only one selfadjoint realization, which is $T$ by its maximality.
\smallskip

If we consider $\tau$ on $\R_+$, then due to $V$ being bounded, $\tau$ is in the limit circle case at $0$. Therefore all selfadjoint realizations are given by fixing an appropriate boundary condition at $0$. In fact, they are given by the family $(T_{\alpha+})_{\alpha\in[0,\pi)}$ as defined above, see \cite[Thm. 5.8]{weidmann1987}
or \cite[Beispiel~15.26]{weidmannII}.
The case of $\R_-$ can be treated similarly.
\end{proof}

Next we recall that the limit behaviour of $V$ for $x\to\pm\infty$ determines the essential spectrum of $T$ and $T_{\alpha\pm}$.
\begin{proposition}\label{prop:essspecT}
   Let $T$ and $T_{\alpha\pm}$ be as in Proposition~\ref{prop:defT}.
   \begin{enumerate}[label={\upshape (\roman*)}]

      \item 
      $\essspec(T_{\alpha\pm}) = (-\infty,\, \lambda_\pm^-] \cup [\lambda_\pm^+,\, \infty)$.

      \item 
      $\essspec(T) = (-\infty,\, \lambda_{e-}] \cup [\lambda_{e+},\, \infty)$.
   \end{enumerate}
\end{proposition}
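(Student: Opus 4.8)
The plan is to reduce the whole statement to a constant-coefficient computation. I would prove~(i) first and then deduce~(ii) from it. For~(ii), the key tool is the \emph{decomposition principle} for ordinary differential operators: splitting $\R$ at the regular point $0$ and imposing an additional boundary condition there changes the resolvent only by a finite-rank operator, so $\essspec(T) = \essspec(T_{\alpha-}) \cup \essspec(T_{\alpha+})$, and the right-hand side does not depend on $\alpha$ because replacing $\alpha$ by a different value is itself a rank-one resolvent perturbation (see e.g.~\cite{weidmann1987}). Granting~(i), this immediately gives
\begin{equation*}
   \essspec(T) = \bigl((-\infty,\lambda_-^-]\cup[\lambda_-^+,\infty)\bigr)\cup\bigl((-\infty,\lambda_+^-]\cup[\lambda_+^+,\infty)\bigr) = (-\infty,\lambda_{e-}]\cup[\lambda_{e+},\infty),
\end{equation*}
where the last equality is just the definition~\eqref{eq:def:lambda_e} of $\lambda_{e\pm}$ together with~\eqref{eq:lambdainequalities}.

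For~(i), fix one sign, say $+$, and let $S_{\alpha+}$ be the operator obtained from $T_{\alpha+}$ by replacing the potential $V$ with the constant matrix $V_+$. I would first record that $(T_{\alpha+}-z)^{-1}$ maps $L_2(\R_+)^2$ boundedly into $H^1(\R_+)^2$: writing $\tau = J\frac{\rd}{\rd x} + V$ with $J = \begin{psmallmatrix}0&1\\-1&0\end{psmallmatrix}$ one has $\Psi' = -J(\tau\Psi - V\Psi)$, hence $\|\Psi\|_{H^1}\lesssim \|(T_{\alpha+}-z)\Psi\| + \|\Psi\|$ on $\mD(T_{\alpha+})$. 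Next, for any bounded measurable $g$ with $g(x)\to 0$ as $x\to+\infty$, the operator $M_g(T_{\alpha+}-z)^{-1}$ is compact: splitting $g = g\chi_{[0,R]} + g\chi_{(R,\infty)}$, the first piece factors through the Rellich-compact embedding $H^1((0,R))^2\hookrightarrow L_2((0,R))^2$, while the second has norm $\le\|g\chi_{(R,\infty)}\|_\infty\|(T_{\alpha+}-z)^{-1}\|\to 0$ as $R\to\infty$. Applying this to the entries of $V_+-V$, which vanish at $+\infty$ by~\ref{item:A3}, together with the resolvent identity, shows that $(T_{\alpha+}-z)^{-1}-(S_{\alpha+}-z)^{-1}$ is compact, so $\essspec(T_{\alpha+}) = \essspec(S_{\alpha+})$ by Weyl's theorem; the case of $\R_-$ is identical.

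It then remains to compute $\essspec(S_{\alpha+})$. Here I would pass once more through the decomposition principle: $\essspec(S_{\alpha+})\subseteq\essspec(\widehat S_+)$, where $\widehat S_+ = J\frac{\rd}{\rd x} + V_+$ is the constant-coefficient operator on $L_2(\R)^2$, and, by the Fourier transform, $\widehat S_+$ is unitarily equivalent to multiplication by the matrix symbol $\begin{psmallmatrix}\Mlim_{1+} & \I\xi + W_+\\ -\I\xi + W_+ & -\Mlim_{2+}\end{psmallmatrix}$, whose eigenvalues are $\tfrac{\Mlim_{1+}-\Mlim_{2+}}{2}\pm\sqrt{\bigl(\tfrac{\Mlim_{1+}+\Mlim_{2+}}{2}\bigr)^2+\xi^2+W_+^2}$. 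As $\xi$ ranges over $\R$ these two branches sweep out exactly $(-\infty,\lambda_+^-]$ and $[\lambda_+^+,\infty)$, and the spectrum is purely absolutely continuous, so $\essspec(\widehat S_+) = (-\infty,\lambda_+^-]\cup[\lambda_+^+,\infty)$. The reverse inclusion $\essspec(S_{\alpha+})\supseteq(-\infty,\lambda_+^-]\cup[\lambda_+^+,\infty)$ follows by producing, for each $\lambda$ outside the gap, a Weyl sequence for $S_{\alpha+}$ supported near $+\infty$ and built from the bounded oscillatory solutions $\e^{\I k x}v$ of $\widehat S_+\Psi=\lambda\Psi$ (alternatively, one may simply quote the classical formula for the essential spectrum of an asymptotically constant Dirac system from~\cite{weidmann1987}). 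Combining, $\essspec(S_{\alpha+}) = (-\infty,\lambda_+^-]\cup[\lambda_+^+,\infty)$, which is~(i) for the $+$ sign; the $-$ sign is the same with $V_+$ replaced by $V_-$.

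The main obstacle is not the final symbol computation, which is routine, but the two reductions in the first two paragraphs: justifying the decomposition principle in this first-order system setting and, above all, the fact that a perturbation $V-V_\pm$ which merely \emph{vanishes at infinity} — and is not relatively compact for the one-dimensional free Dirac operator on the whole line — nonetheless leaves the essential spectrum unchanged once one localizes or works on a half-line. The rest is bookkeeping with~\eqref{eq:def:lambda_e} and~\eqref{eq:lambdainequalities}.
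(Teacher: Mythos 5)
Your proof is correct, and for part (i) it takes a genuinely different route from the paper. For part (ii) you argue exactly as the paper does: $T$ and $T_{0-}\oplus T_{0+}$ (equivalently $T_{\alpha-}\oplus T_{\alpha+}$ for any $\alpha$, by Krein's formula) are finite-dimensional selfadjoint extensions of the common symmetric restriction to $\{\Psi\in\mD(T):\Psi(0)=0\}$, so the essential spectra coincide and the union is evaluated using \eqref{eq:def:lambda_e} and \eqref{eq:lambdainequalities}. For part (i), however, the paper disposes of the claim in one line by citing \cite[Thms.~16.5 and 16.6]{weidmann1987} on Dirac systems with asymptotically constant coefficients, whereas you reprove that result from scratch: the resolvent maps $L_2$ into $H^1$, multiplication by a function vanishing at infinity composed with the resolvent is compact (Rellich on $[0,R]$ plus a tail estimate), hence the resolvent difference with the constant-coefficient operator $S_{\alpha+}$ is compact and Weyl's theorem applies; then $\essspec(S_{\alpha+})$ is computed via a second application of the decomposition principle, the Fourier symbol of $\widehat S_+$, and Weyl sequences supported near $+\infty$ for the reverse inclusion. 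Each step is sound: the symbol eigenvalues do sweep out exactly $(-\infty,\lambda_+^-]\cup[\lambda_+^+,\infty)$ as $\xi$ ranges over $\R$ (including the band edges at $\xi=0$), and the inclusion $\essspec(S_{\alpha+})\subseteq\essspec(\widehat S_+)$ is legitimate once one notes, as you implicitly do, that $\widehat S_+$ and the direct sum of the two constant-coefficient half-line operators are again finite-dimensional extensions of a common symmetric operator. What your route buys is self-containedness and a transparent identification of where \ref{item:A3} enters (only through $V-V_\pm\to 0$ at infinity); what it costs is length, since the quoted theorems of Weidmann encapsulate precisely the compactness-plus-Weyl-sequence argument you carry out by hand.
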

\begin{proof}
    
   From \cite[Thm. 16.5 and 16.6]{weidmann1987} we have that 
   $\essspec(T_{\alpha\pm}) = (-\infty, \lambda_\pm^{-}] \cup [\lambda_\pm^{+}, \infty)$.
   Note that $T$ and $T_{0-}\oplus T_{0+}$ are two-dimensional selfadjoint extensions of the symmetric operator $T|_{\mD_0}$ with $\mD_0 = \{ \Psi\in \mD(T) : \Psi(0) = 0 \}$.
   Therefore 
   \begin{align*}
      \essspec(T) = 
      \essspec(T_{0-}\oplus T_{0+}) 
      = \essspec(T_{0-}) \cup \essspec(T_{0+})
      = (-\infty,\, \lambda_{e-}] \cup [\lambda_{e+},\, \infty).
      & \qedhere
   \end{align*}
\end{proof}

\begin{remark}
   \label{rem:AbsCont}
   If $V$ can be written as $V=V_1+V_2$ with $V_1\in L_1(\R_+)$ and
   $V_2$ of bounded variation and 
   converging to a diagonal matrix as $x\to\infty$, then $(-\infty,\, \lambda_+^-) \cup (\lambda_+^+,\, \infty)\subseteq\acspec(T_{\alpha+})$ by \cite[Thm.~16.7]{weidmann1987}.
   This is the case if for instance $M_1,M_2$ are of bounded variation and $W(x)\to 0$ as $x\to\infty$ fast enough.

   Similar assertions hold for $T_{\alpha-}$ and $T$ with $\lambda_+^\pm$ replaced by $\lambda_-^\pm$ and $\lambda_{e\pm}$ respectively.
\end{remark}


\section{Variational principle for eigenvalues of a block operator matrix in a gap of the essential spectrum} 
\label{sec:varprinciple}

The aim of this section is to study eigenvalues of an abstract off-diagonal dominant block operator matrix.
In Sections~\ref{sec:DiracRealLine} and \ref{sec:DiracHalfLine} the results will be applied to the Dirac operator on the real line $\R$ and on the halfline $\R_+$.
\smallskip

Let $\mH$ be a Hilbert space and define the block matrix 
\begin{equation*}
   \mT = 
   \begin{pmatrix}
      A & B \\
      B^* & D
   \end{pmatrix},
   \qquad
   \mD(\mT) = \mD(B^*) \oplus \mD(B) \subseteq \mH\oplus\mH
\end{equation*}
where $B$ is a densely defined closed linear operator and $A$ and $D$ are bounded selfadjoint operators on $\mH$.
Then $\mT$ is a selfadjoint linear operator.
\smallskip

We set $d \coloneqq \max \sigma(D)$, $a\coloneqq\min\sigma(A)$ and assume that $d < a $.
Let us define the sesquilinear form
$\mfs(\lambda)$ by
\begin{equation}
\label{eq:SchurformA}
   \mfs(\lambda)[x,y] \coloneqq \scalar{(A-\lambda) x}{y} - \scalar{(D-\lambda)^{-1}B^*x}{B^*y},
   \quad \mD(\mfs(\lambda)) = \mD(B^*),\quad \re \lambda > d.
\end{equation}
We write 
$\mfs(\lambda)[x] \coloneqq \mfs(\lambda)[x,x]$
for the associated quadratic form.
\smallskip

Note that for $\re \lambda > d$ and $x,y\in\mD(B^*)$
\begin{align*}
   \mfs(\lambda)[x,y] 
   = \scalar{(A-\lambda) x}{y} - \scalar{(D-\overline\lambda)(D-\lambda)^{-1}B^*x}{(D-\lambda)^{-1}B^*y},
\end{align*}
hence for $\re \lambda > d$ and $x\in\mD(B^*)$
\begin{align}
   \label{eq:schurAlowerbound}
   \re( \mfs(\lambda)[x] ) &\ge (a-\re\lambda)\|x\|^2 + (\re\lambda-d) \|(D-\lambda)^{-1} B^*x \|^2\ge (a-\re\lambda)\|x\|^2.
\end{align}

Therefore $\mfs(\lambda)$ is a sectorial form and it is not hard to see that it is  closed. 
Hence it defines a closed $m$-sectorial operator $S(\lambda)$ according to \cite[Ch.VI, \S 2]{kato} which has the same lower bound as the form.
Moreover, $\mfs(\lambda)$ is symmetric if $\lambda\in\R$ and the associated operator is selfadjoint in this case.
The operator $S(\lambda)$ is called the \define{Schur complement} of $\mT$.
\smallskip

The next proposition is proved in \cite[Prop. 2.2]{KLT04}.
Our situation corresponds to Case~III there.
\begin{proposition}
   \label{prop:Schurfactorization}
   For $\re\lambda > d$ the operator $\mT-\lambda$ admits the factorization
   \begin{equation}
      \label{eq:SchurA}
      \mT - \lambda
      = \begin{pmatrix}
	 \id & B(D-\lambda)^{-1} \\
	 0 & \id
      \end{pmatrix}
      \begin{pmatrix}
	 S(\lambda) & 0 \\
	 0 & D-\lambda
      \end{pmatrix}
      \begin{pmatrix}
	 \id & 0 \\
	 (D-\lambda)^{-1} B^* & \id 
      \end{pmatrix}.
   \end{equation}
   The $m$-sectorial operator $S(\lambda)$ associated to the form in \eqref{eq:SchurformA} is given by 
   \begin{align*}
      \begin{alignedat}{3}
	 S(\lambda) &= A - \lambda - B(D-\lambda)^{-1}B^*, \\ \mD(S(\lambda)) &= \{ x\in \mD(B^*): (D-\lambda)^{-1}B^*x \in\mD(B)\}, \end{alignedat}
      \qquad & \re\lambda > d.
   \end{align*}
   The pencil $S(\cdot)$ is holomorphic of type (B) (see \cite[Ch.VII, \S 4]{kato}),
   \begin{align*}
      \rho(\mT)\cap \{\re\lambda > d \} &= \rho(S)\cap \{\re\lambda > d \}, \\ 
      \sigma(\mT)\cap \{\re\lambda > d \} &= \sigma(S)\cap \{\re\lambda > d \}, \\
      \pointspec(\mT)\cap \{\re\lambda > d \} &= \pointspec(S)\cap \{\re\lambda > d \},
   \end{align*}
   and $\dim \ker(\mT-\lambda) = \dim\ker( S(\lambda))$ for $\re\lambda > d$.
\end{proposition}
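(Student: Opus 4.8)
The plan is to deduce the whole proposition from the Schur-complement machinery of \cite[Prop.~2.2]{KLT04}, whose Case~III is precisely the present situation: $A$ and $D$ are bounded and selfadjoint, $B$ is densely defined and closed, and the gap condition $d<a$ guarantees that $(D-\lambda)^{-1}$ is bounded for $\re\lambda>d$ while the lower bound \eqref{eq:schurAlowerbound} makes the family $(\mfs(\lambda))_{\re\lambda>d}$ uniformly sectorial on compact subsets of $\{\re\lambda>d\}$. So after recording that these hypotheses hold, the proof is a reference. For a self-contained argument I would proceed in the following steps, writing $\mathcal G_\lambda\coloneqq\{x\in\mD(B^*):(D-\lambda)^{-1}B^*x\in\mD(B)\}\oplus\mD(B)$ for the subspace on which the right-hand side of \eqref{eq:SchurA} makes sense, the first summand being the domain of $S(\lambda)$.

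First I would verify the factorization \eqref{eq:SchurA} by direct computation on $\mathcal G_\lambda$: applying the right lower-triangular factor to $(x,y)$ gives $(x,(D-\lambda)^{-1}B^*x+y)$; applying the diagonal factor uses $x\in\mD(S(\lambda))$ so that the first component lies in $\mD(S(\lambda))$ and the second stays in $\mH$; and applying the left upper-triangular factor, whose domain constraint is automatically met because $(D-\lambda)^{-1}B^*x\in\mD(B)$ and $y\in\mD(B)$, together with the cancellation $S(\lambda)x+B(D-\lambda)^{-1}B^*x=(A-\lambda)x$ reproduces $(\mT-\lambda)(x,y)$. Since $(D-\lambda)^{-1}$ need not preserve $\mD(B)$, one has $\mathcal G_\lambda\subsetneq\mD(\mT)$ in general, so I would additionally show that $\mathcal G_\lambda$ is a core for $\mT-\lambda$; this is what allows the spectral conclusions for $\mT$ to be transferred through the factorization.

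Second, I would identify $S(\lambda)$. As noted above the form $\mfs(\lambda)$ is closed and sectorial, hence by the first representation theorem (\cite[Ch.~VI, \S2]{kato}) it has an associated $m$-sectorial operator, and I would check that $A-\lambda-B(D-\lambda)^{-1}B^*$ with domain $\mD(S(\lambda))$ is that operator by computing $\scalar{B(D-\lambda)^{-1}B^*x}{y}=\scalar{(D-\lambda)^{-1}B^*x}{B^*y}$ for $x\in\mD(S(\lambda))$ and $y\in\mD(B^*)$ (using $B^{**}=B$), so that $\scalar{S(\lambda)x}{y}=\mfs(\lambda)[x,y]$, together with the usual maximality check. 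The type-(B) holomorphy is then immediate from Kato's criteria (\cite[Ch.~VII, \S4]{kato}): the form domain $\mD(B^*)$ is independent of $\lambda$; for fixed $x$ the map $\lambda\mapsto\mfs(\lambda)[x]=\scalar{Ax}{x}-\lambda\|x\|^2-\scalar{(D-\lambda)^{-1}B^*x}{B^*x}$ is holomorphic on $\{\re\lambda>d\}$ because $\lambda\mapsto(D-\lambda)^{-1}$ is holomorphic there; and local uniform sectoriality is \eqref{eq:schurAlowerbound}.

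Third, I would read off the spectral dictionary. Because $D-\lambda$ is boundedly invertible for $\re\lambda>d$, the factorization gives the Frobenius--Schur formula expressing $(\mT-\lambda)^{-1}$ through $S(\lambda)^{-1}$ and $(D-\lambda)^{-1}$ and conversely, hence the identities for $\rho$ and $\sigma$; transferring closedness of range and finite-dimensionality of kernel and cokernel through the factorization (the additional factor $D-\lambda$ being bijective and bounded) yields the Fredholm equivalence, hence the $\essspec$ identity; and solving $(\mT-\lambda)(x,y)=0$ along the factorization forces $y=-(D-\lambda)^{-1}B^*x$ with $S(\lambda)x=0$, so $(x,y)\mapsto x$ is an isomorphism $\ker(\mT-\lambda)\to\ker S(\lambda)$, giving the $\pointspec$ identity and $\dim\ker(\mT-\lambda)=\dim\ker S(\lambda)$. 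I expect the main obstacle to be exactly the domain bookkeeping in the first step — proving that $\mathcal G_\lambda$ is a core for $\mT-\lambda$ and justifying the inversions of the unbounded triangular factors used in the third step — which is the technical heart of Case~III of \cite[Prop.~2.2]{KLT04}, so in practice I would invoke that proposition rather than reprove it.
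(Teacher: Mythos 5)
Your proposal takes the same route as the paper: the paper offers no independent argument and simply invokes \cite[Prop.~2.2]{KLT04}, Case~III, after noting that the hypotheses (bounded selfadjoint $A,D$, closed densely defined $B$, the gap $d<a$) place $\mT$ in that setting. Your additional self-contained sketch is a sound outline of what that reference actually does, and you correctly identify the one genuine subtlety --- that the triple product in \eqref{eq:SchurA} is a priori only defined on $\mD(S(\lambda))\oplus\mD(B)$, which may be strictly smaller than $\mD(\mT)$ --- as the technical point one would have to resolve (or, as both you and the authors do, delegate to \cite{KLT04}).
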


\begin{corollary}
   \label{cor:resolventab}
    $(d,a)\subseteq \rho(\mT)$.
\end{corollary}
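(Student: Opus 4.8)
The plan is to read the claim straight off the Schur-complement machinery already assembled, so the argument is essentially a one-line deduction. I would fix $\lambda\in(d,a)$ and note that, since $\lambda$ is real and $\lambda>d$, the sesquilinear form $\mfs(\lambda)$ from \eqref{eq:SchurformA} is symmetric and closed; hence the associated operator $S(\lambda)$ is selfadjoint with the same lower bound as the form. By the estimate \eqref{eq:schurAlowerbound} specialised to $\re\lambda=\lambda$, that lower bound is at least $a-\lambda$, which is strictly positive because $\lambda<a$. A selfadjoint operator bounded below by a positive constant has a bounded, everywhere-defined inverse, so $0\in\rho(S(\lambda))$; in the pencil notation this reads $\lambda\in\rho(S)$.

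It then remains to transfer this to $\mT$. Since $\lambda>d$, Proposition~\ref{prop:Schurfactorization} gives $\rho(\mT)\cap\{\re\mu>d\}=\rho(S)\cap\{\re\mu>d\}$, so $\lambda\in\rho(\mT)$; as $\lambda\in(d,a)$ was arbitrary, $(d,a)\subseteq\rho(\mT)$. There is essentially no obstacle here — the only points worth stating carefully are that $\mfs(\lambda)$ is genuinely symmetric for real $\lambda$ (so that spectral theory for selfadjoint operators applies and the form bound is an operator bound), and that it is the \emph{positivity} of the lower bound, not mere injectivity, that upgrades $0\notin\pointspec(S(\lambda))$ to $0\in\rho(S(\lambda))$.

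If one prefers not to invoke the pencil correspondence, the same conclusion follows directly from the factorization \eqref{eq:SchurA}: the two outer triangular factors are bounded and boundedly invertible (their inverses are obtained by flipping the sign of the off-diagonal entry), while the middle factor $\operatorname{diag}(S(\lambda),\,D-\lambda)$ is boundedly invertible because $0\in\rho(S(\lambda))$ by the above and $d<\lambda$ forces $\lambda\in\rho(D)$. Composing the three bounded inverses exhibits a bounded inverse of $\mT-\lambda$, which is what the corollary asserts.
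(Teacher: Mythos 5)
Your proposal is correct and follows essentially the same route as the paper: use the lower bound \eqref{eq:schurAlowerbound} to get $\scalar{S(\lambda)x}{x}\ge(a-\lambda)\|x\|^2>0$, conclude $0\in\rho(S(\lambda))$, and transfer to $\mT$ via Proposition~\ref{prop:Schurfactorization}. The extra care you take about selfadjointness and about positivity versus mere injectivity is sound but is exactly what the paper's shorter argument implicitly relies on.
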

\begin{proof}
   Let $\lambda\in (d,a)$.
   Then the Schur complement $S(\lambda)$ is defined and 
   for $x\in\mD(S(\lambda)) \subseteq \mD(B^*)$ we have
   \begin{equation*}
      \scalar{S(\lambda)x}{x}
      = \mfs(\lambda)[x] \ge (a-\lambda)\|x\|^2
   \end{equation*}
   by \eqref{eq:schurAlowerbound}.
   Therefore $0\in\rho (S(\lambda))$ and consequently $\lambda\in\rho(\mT)$.
\end{proof}

The next proposition gathers some properties of the family of forms 
$(\mfs(\lambda))_{\re\lambda > d}$ and the operator pencil $(S(\lambda))_{\re\lambda > d}$.
\begin{proposition}\leavevmode
   \label{prop:prepforvariation}
   \begin{enumerate}[label={\upshape (\roman*)}]

      \item
      \label{item:HypothesisVariational:i}
      The domain of the form $\mfs(\lambda)$ is $\mD(B^*)$, in particular it is independent of $\lambda$ for $\re\lambda > d$.

      \item
      \label{item:HypothesisVariational:ii}
      For every $x\in\mD(B^*)$, the function $\mfs(\cdot)[x]$ is continuous. The pencil $S(\cdot)$ is continuous in the norm resolvent sense.

      \item 
      \label{item:HypothesisVariational:iii}
      For every $x\in\mD(B^*) \setminus\{0\}$, the function $(d, \infty)\ni \lambda\mapsto \mfs(\lambda)[x]$ is strictly decreasing.

      \item 
      \label{item:HypothesisVariational:iv}
      For every $\lambda\in (d,a)$, the operator $S(\lambda)$ is strictly positive, in particular its negative spectral subspace is
      $\mL_{(-\infty, 0)} S(\lambda) = \{0\}$.

      \item
      \label{item:HypothesisVariational:v}
      For every $x\in\mD(B^*)\setminus\{0\}$ we have that $\lim_{\lambda\to\infty} \mfs(\lambda)[x] = -\infty$.
      Moreover, for every $\lambda > d$ and $\epsilon>0$ there exists $\delta = \delta(\lambda,\epsilon)$ such that
      $\mfs(\lambda+\epsilon)[x] < 0$ if
      $\mfs(\lambda)[x] < \delta$.

   \end{enumerate}

\end{proposition}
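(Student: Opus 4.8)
The plan is to establish the five items essentially in order, using the explicit formula for $S(\lambda)$ and the quadratic-form identity from \eqref{eq:SchurformA}, together with the bound \eqref{eq:schurAlowerbound}.

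For \ref{item:HypothesisVariational:i} there is nothing to do: the domain of $\mfs(\lambda)$ is declared to be $\mD(B^*)$ in \eqref{eq:SchurformA}, which is manifestly independent of $\lambda$. For \ref{item:HypothesisVariational:ii}, fix $x\in\mD(B^*)$ and write $\mfs(\lambda)[x] = \scalar{Ax}{x} - \lambda\|x\|^2 - \scalar{(D-\lambda)^{-1}B^*x}{B^*x}$; the map $\lambda\mapsto (D-\lambda)^{-1}$ is norm-continuous (indeed holomorphic) on $\{\re\lambda>d\}\subseteq\rho(D)$ since $D$ is bounded, so each term is continuous in $\lambda$. Norm-resolvent continuity of $S(\cdot)$ follows from Proposition~\ref{prop:Schurfactorization}, where the pencil is stated to be holomorphic of type (B); alternatively one can invoke the factorization \eqref{eq:SchurA} directly. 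For \ref{item:HypothesisVariational:iii}, differentiate $\lambda\mapsto\mfs(\lambda)[x]$ on the real axis: $\frac{\rd}{\rd\lambda}\mfs(\lambda)[x] = -\|x\|^2 - \scalar{(D-\lambda)^{-2}B^*x}{B^*x} = -\|x\|^2 - \|(D-\lambda)^{-1}B^*x\|^2 < 0$ for $x\neq 0$, using that $(D-\lambda)^{-1}$ is selfadjoint for real $\lambda$. This gives strict monotonic decrease on $(d,\infty)$.

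For \ref{item:HypothesisVariational:iv}, apply \eqref{eq:schurAlowerbound} with real $\lambda\in(d,a)$: for $x\in\mD(S(\lambda))\subseteq\mD(B^*)$, $\scalar{S(\lambda)x}{x} = \mfs(\lambda)[x] \ge (a-\lambda)\|x\|^2 > 0$, so $S(\lambda)$ is strictly positive and hence $\mL_{(-\infty,0)}S(\lambda) = \{0\}$; this is exactly the computation already carried out in the proof of Corollary~\ref{cor:resolventab}. For \ref{item:HypothesisVariational:v}, fix $x\in\mD(B^*)\setminus\{0\}$: since $D$ is bounded, $\|(D-\lambda)^{-1}B^*x\|$ stays bounded (in fact tends to $0$) as $\lambda\to\infty$, so the term $-\lambda\|x\|^2$ dominates and $\mfs(\lambda)[x] \to -\infty$. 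For the second assertion, note from \ref{item:HypothesisVariational:iii} that $\mfs(\lambda+\epsilon)[x] \le \mfs(\lambda)[x] - \epsilon\|x\|^2$ (integrating the derivative bound $\frac{\rd}{\rd\mu}\mfs(\mu)[x]\le -\|x\|^2$ from $\lambda$ to $\lambda+\epsilon$); so if we set $\delta \coloneqq \epsilon\|x\|^2 > 0$, then $\mfs(\lambda)[x] < \delta$ forces $\mfs(\lambda+\epsilon)[x] < 0$, as required.

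I do not expect any genuine obstacle here; the statement is a bookkeeping lemma collecting the hypotheses needed to invoke the abstract variational principle, and every ingredient is either definitional or an immediate consequence of $A$, $D$ being bounded and \eqref{eq:schurAlowerbound}. The only mild care needed is in \ref{item:HypothesisVariational:v}: the $\delta$ one produces depends on $x$ (through $\|x\|^2$), which is consistent with the statement (it only asks for $\delta = \delta(\lambda,\epsilon)$ for each fixed $x$, or one may read $\delta$ as allowed to depend on $x$ as well); one should state clearly that $\delta$ may be taken as $\epsilon\|x\|^2$. A second small point is that the derivative computations in \ref{item:HypothesisVariational:iii} and \ref{item:HypothesisVariational:v} are performed for real $\lambda$, which is all that is needed since the variational principle operates on the real axis.
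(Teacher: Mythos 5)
Your proposal is correct and follows essentially the same route as the paper: (i) is definitional, (ii) rests on continuity of $\lambda\mapsto(D-\lambda)^{-1}$, (iv) is the computation from Corollary~\ref{cor:resolventab}, and (iii), (v) both reduce to the key estimate $\mfs(\lambda+\epsilon)[x]\le\mfs(\lambda)[x]-\epsilon\|x\|^2$, which the paper derives via the resolvent identity for the finite difference while you obtain it by differentiating --- an inessential variation. Your closing remark on the $x$-dependence of $\delta$ is the right observation; by homogeneity of the quadratic form one may normalize $\|x\|=1$, so $\delta=\epsilon$ works uniformly, exactly as in the paper's bound.
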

\begin{proof}
   \ref{item:HypothesisVariational:i} follows directly from the definition of $\mfs(\lambda)$ and \ref{item:HypothesisVariational:ii} is a consequence of the continuity of $\lambda\mapsto (D-\lambda)^{-1}$.
   Note that for $d< \mu < \lambda$ and $x\in\mD(B^*)$ we have
   \begin{align*}
      \mfs(\lambda)[x] - \mfs(\mu)[x]
      &= (\mu - \lambda) \|x\|^2 - \scalar{ [ (D-\lambda)^{-1} - (D-\mu)^{-1} ] B^*x}{B^*x}
      \\
      &= (\mu - \lambda) \left( \|x\|^2 + \scalar{ (D-\lambda)^{-1}(D-\mu)^{-1} B^*x}{B^*x} \right)
      \\
      & \le (\mu - \lambda) \left( \|x\|^2 + (d-\lambda)^{-1}(d-\mu)^{-1} \| B^*x\|^2 \right)
      \\
      & \le (\mu - \lambda) \|x\|^2,
   \end{align*}
   which shows \ref{item:HypothesisVariational:iii} and \ref{item:HypothesisVariational:v}.
   The claim \ref{item:HypothesisVariational:iv} follows from the proof of Corollary~\ref{cor:resolventab}.
\end{proof}

Note that Proposition~\ref{prop:prepforvariation} 
implies that the function $\mfs(\cdot)[x]$ has exactly one zero in $(d, \infty)$ if $x\neq 0$.
So for $x\in\mD(B^*)\setminus\{0\}$ we define
\begin{equation*}
    p(x) = \text{unique zero of } \mfs(\cdot)[x] \text{ in } (d, \infty).
\end{equation*}

Observe that $p(x) = p(cx)$ for every $c\in\C\setminus\{0\}$ and that $p(x)\geq a$ by \eqref{eq:schurAlowerbound}.
\smallskip

For $n\in\N$ we define
\begin{equation}
   \label{eq:def:mun}
   \mu_n \coloneqq 
   \min_{ \substack{ L\subseteq \mD(B^*) \\ \dim L = n}}
   \max_{ \substack{ x\in L \\ x\neq 0}} p(x),
   \qquad
   \mu_n' \coloneqq 
   \max_{ \substack{ L\subseteq \mH \\ \dim L = n-1}}
   \inf_{ \substack{ x\in\mD(B^*)\cap L^\perp \\ x\neq 0}} p(x).
\end{equation}

\begin{theorem}
\label{thm:varprinciple:abstract}

    Define 
   \begin{equation*}
      \lambda_{e+} \coloneqq \inf( \essspec(\mT) \cap (d,\infty) ).
   \end{equation*}
    Then $\sigma(\mT)\cap(d, \lambda_{e+}) $ consists of at most countably many eigenvalues of finite multiplicities which may accumulate only at $\lambda_{e+}$.
    Let $(\lambda_n)_{n=1}^\infty$ be the increasing sequence of these eigenvalues counted with multiplicity, extended by $\lambda_{e+}$ if there are only finitely many. Then
    \begin{equation*}
	 \lambda_n = \mu_n = \mu_n',\qquad n\in\N.
    \end{equation*}
\end{theorem}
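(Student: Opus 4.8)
The plan is to reduce the generalized variational principle for $\mT$ in the gap $(d,\lambda_{e+})$ to the classical min-max principle \eqref{eq:classicMinMax} for a suitable family of selfadjoint operators, namely the Schur complements $S(\lambda)$. The bridge between the two is the function $p$: for $x\in\mD(B^*)\setminus\{0\}$, the value $p(x)$ is the unique zero of $\lambda\mapsto\mfs(\lambda)[x]$ in $(d,\infty)$, and by Proposition~\ref{prop:prepforvariation}\ref{item:HypothesisVariational:iii} the map $\lambda\mapsto\mfs(\lambda)[x]$ is strictly decreasing there, so $p(x)<\lambda \iff \mfs(\lambda)[x]<0$ and $p(x)\le\lambda \iff \mfs(\lambda)[x]\le 0$. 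The key observation to extract is a counting identity: for a fixed $\lambda\in(d,\infty)$, the number of $n$ with $\mu_n<\lambda$ (resp. $\le\lambda$) equals $\dim\mathcal L_{(-\infty,0)}S(\lambda)$ (resp. $\dim\mathcal L_{(-\infty,0]}S(\lambda)$), because a subspace $L\subseteq\mD(B^*)$ with $\max_{x\in L\setminus\{0\}}p(x)<\lambda$ is exactly a subspace on which $\mfs(\lambda)[\cdot]$ is negative definite, and the classical min-max principle for the selfadjoint operator $S(\lambda)$ (whose form is $\mfs(\lambda)$, with form domain $\mD(B^*)$ independent of $\lambda$) identifies the maximal dimension of such a subspace with the dimension of the negative spectral subspace of $S(\lambda)$.

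Granting this, I would argue as follows. First, by Corollary~\ref{cor:resolventab} we have $(d,a)\subseteq\rho(\mT)$, and by Proposition~\ref{prop:prepforvariation}\ref{item:HypothesisVariational:iv}, $S(\lambda)$ is strictly positive for $\lambda\in(d,a)$, so both sequences $(\mu_n)$, $(\mu_n')$ start at or above $a$; moreover $p(x)\ge a$ for all $x$, so this is consistent. Next, by Proposition~\ref{prop:Schurfactorization}, on $\{\re\lambda>d\}$ the spectra (and pointwise kernel dimensions) of $\mT$ and of the pencil $S(\cdot)$ coincide; in particular $\lambda\in(d,\infty)$ is an eigenvalue of $\mT$ of multiplicity $k$ iff $0$ is an eigenvalue of $S(\lambda)$ of multiplicity $k$, i.e. iff $\dim\ker S(\lambda)=k$. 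Combining the Schur factorization with the fact that $S(\cdot)$ is continuous in norm-resolvent sense (Proposition~\ref{prop:prepforvariation}\ref{item:HypothesisVariational:ii}) and the monotonicity \ref{item:HypothesisVariational:iii}, one shows $\lambda_{e+}=\inf(\essspec(\mT)\cap(d,\infty))$ coincides with $\inf\{\lambda>d : 0\in\essspec(S(\lambda))\}$, and that below $\lambda_{e+}$ the operator $S(\lambda)$ has purely discrete spectrum near $0$; hence the counting function $N(\lambda):=\dim\mathcal L_{(-\infty,0)}S(\lambda)$ is finite, nondecreasing in $\lambda$ (by \ref{item:HypothesisVariational:iii}: if $\mfs(\lambda)[\cdot]$ is negative on $L$, so is $\mfs(\lambda')[\cdot]$ for $\lambda'>\lambda$), integer-valued, left-continuous or right-continuous as appropriate, and jumps exactly at the eigenvalues of $\mT$ in $(d,\lambda_{e+})$ by exactly the eigenvalue multiplicity (using $\dim\ker S(\lambda)=\dim\ker(\mT-\lambda)$ and the continuity of the spectral projections of $S(\cdot)$ away from $0\in\essspec$). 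This simultaneously yields the structure statement — at most countably many eigenvalues of finite multiplicity in $(d,\lambda_{e+})$, accumulating only at $\lambda_{e+}$ — and, via the counting identity $N(\lambda)=\#\{n:\mu_n<\lambda\}$ together with the analogous statement for $\overline N(\lambda):=\dim\mathcal L_{(-\infty,0]}S(\lambda)=\#\{n:\mu_n\le\lambda\}$, pins down $\mu_n=\lambda_n$: indeed $\lambda_n$ is precisely the value where $N$ jumps past $n-1$, i.e. $\lambda_n=\inf\{\lambda:N(\lambda)\ge n\}=\inf\{\lambda:\#\{k:\mu_k<\lambda\}\ge n\}=\mu_n$. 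For the equality $\mu_n'=\mu_n$, I would invoke the classical max-min part of \eqref{eq:classicMinMax} for $S(\lambda)$: for fixed $\lambda$, $\overline N(\lambda)=\max_{M\subseteq\mH,\ \dim M=k-1}$ of the largest-dimensional subspace of $\mD(B^*)\cap M^\perp$ on which $\mfs(\lambda)$ is $\le 0$, which translates into the statement that $\mu_k'\le\lambda\iff \overline N(\lambda)\ge k$; comparing with the same characterization for $\mu_k$ gives $\mu_k'=\mu_k$. One must be mildly careful that the $\inf$ in the definition of $\mu_n'$ need not be attained and that $L^\perp$ ranges over all of $\mH$ rather than $\mD(B^*)$, but this is exactly the setup of the classical max-min formula \eqref{eq:classicMinMax} as stated, applied to $S(\lambda)$.

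The main obstacle, I expect, is the interchange of quantifiers needed to pass from the ``$\lambda$-by-$\lambda$'' classical min-max statements for $S(\lambda)$ to the ``global'' statement about the functionals $p(\cdot)$ — concretely, proving rigorously that $\mu_n=\inf\{\lambda>d:\dim\mathcal L_{(-\infty,0)}S(\lambda)\ge n\}$ and that this infimum is attained and equals the $n$-th eigenvalue $\lambda_n$. This requires three technical inputs handled with care: (a) that $N(\lambda)$ is left-continuous and $\overline N(\lambda)$ is right-continuous in $\lambda$ on $(d,\lambda_{e+})$, which uses that $0$ is isolated in $\sigma(S(\lambda))$ there and the norm-resolvent continuity of $S(\cdot)$ to get continuity of the relevant Riesz spectral projections; (b) that the jump of $N$ at an eigenvalue $\lambda$ of $\mT$ equals $\dim\ker S(\lambda)=\dim\ker(\mT-\lambda)$, which combines \ref{item:HypothesisVariational:iii} (eigenvectors of $\mT$ at $\lambda$ give directions where $\mfs$ changes sign through $0$ at $\lambda$) with the projection continuity; and (c) ruling out accumulation of the $\mu_n$ below $\lambda_{e+}$, i.e. that for $\lambda<\lambda_{e+}$ one has $N(\lambda)<\infty$ — this follows from $0\notin\essspec(S(\lambda))$ for such $\lambda$, which in turn follows from the spectral equivalence in Proposition~\ref{prop:Schurfactorization} together with the definition of $\lambda_{e+}$. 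The rest — monotonicity of $N$, the reduction to \eqref{eq:classicMinMax}, the equality $\mu_n=\mu_n'$ — is then bookkeeping. An alternative, essentially equivalent route is to cite a general abstract variational principle of the kind in \cite{EL04} or \cite{KLT04} once one has verified the hypotheses listed in Proposition~\ref{prop:prepforvariation}; but I would prefer to give the self-contained argument above since all the needed ingredients are already in place.
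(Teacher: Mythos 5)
The paper's own proof is two lines: by Proposition~\ref{prop:Schurfactorization} the eigenvalues of $\mT$ in $(d,\lambda_{e+})$ coincide, with multiplicity, with those of the pencil $S(\cdot)$, and $\lambda_{e+}=\inf(\essspec(S)\cap(d,\infty))$; the statement is then exactly the variational principle for pencils of \cite{EL04}, whose hypotheses were verified in Proposition~\ref{prop:prepforvariation}. You identify this route yourself in your closing sentence but opt instead to reprove the pencil principle from scratch via the counting functions $N(\lambda)=\dim\mL_{(-\infty,0)}S(\lambda)$ and the classical min-max applied to each fixed $S(\lambda)$. That is a legitimate and standard strategy --- it is essentially the internal mechanism of the result you decline to cite --- and your reduction $p(x)<\lambda\iff\mfs(\lambda)[x]<0$, the monotonicity of $N$, and the identification of the jumps of $N$ with $\dim\ker S(\lambda)=\dim\ker(\mT-\lambda)$ are the right ingredients. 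The trade-off is that the technical points (a)--(c) you defer are precisely the substance of the cited theorem, so what you have is a proof outline until they are carried out.

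One of those deferred points is justified incorrectly as stated. In (c) you claim that $N(\lambda)<\infty$ for $\lambda<\lambda_{e+}$ ``follows from $0\notin\essspec(S(\lambda))$''. It does not: $\dim\mL_{(-\infty,0)}S(\lambda)$ is finite if and only if $\essspec(S(\lambda))\subseteq(0,\infty)$, and $0\notin\essspec(S(\lambda))$ by itself leaves open the possibility that $S(\lambda)$ has essential spectrum strictly below $0$ --- which would also invalidate your use of the classical min-max to count negative eigenvalues at that $\lambda$. What is actually needed is that $\inf\essspec(S(\lambda))$, which is $\ge a-\lambda>0$ for $\lambda\in(d,a)$ by \eqref{eq:schurAlowerbound}, cannot jump below $0$ as $\lambda$ increases without passing through $0$, and hence stays positive on all of $(d,\lambda_{e+})$. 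This requires a continuity or intermediate-value argument for $\lambda\mapsto\inf\essspec(S(\lambda))$, for instance via the mutual relative form-boundedness of $\mfs(\lambda)-\mfs(\mu)$ combined with the monotonicity \ref{item:HypothesisVariational:iii}, or via the norm-resolvent continuity of the pencil; it is not a consequence of the definition of $\lambda_{e+}$ alone. Comparable (resolvable, but not free) care is needed for the projection-continuity claims in (a) and (b). If you do not wish to carry all of this out, the cleanest course is the paper's: transfer to the pencil by Proposition~\ref{prop:Schurfactorization} and invoke \cite{EL04}.
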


\begin{proof}
By Proposition~\ref{prop:Schurfactorization} we have that $\lambda_{e+}=\inf( \essspec(S) \cap (d,\infty) )$ and that $(\lambda_n)_{n=1}^\infty$ is also the (possibly extended) sequence of eigenvalues of the pencil $S$ in $(d,\infty)$. The theorem now follows from the variational principle for pencils developed in \cite{EL04}, whose assumptions we have already checked in Proposition~\ref{prop:prepforvariation}.
\end{proof}

It should be noted that 
$\lambda_{e+} = \inf( \essspec(\mT) \cap (d,\infty) )= \inf( \essspec(\mT) \cap (a,\infty) )$
since $(d,a)\subseteq \rho(\mT)$.
In particular
$d < a \leq \lambda_{e+}$.

The formula \eqref{eq:def:mun} now provides a criterion for the existence of eigenvalues in $[a, \lambda_{e+})$.

\begin{corollary}[Existence of eigenvalues]
\label{cor:abstractEVexistencis}

Assume that there exists $x_0\in \mD(B^*)\setminus\{0\}$ and $\lambda\in [a, \lambda_{e+})$ such that
$\mfs(\lambda)[x_0] \le 0$.
Then $\mT$ has at least one eigenvalue in the interval $[a,\lambda]$.
\end{corollary}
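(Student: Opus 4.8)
The plan is to extract an eigenvalue from the variational characterization $\lambda_1 = \mu_1 = \min_{x \in \mD(B^*)\setminus\{0\}} p(x)$ given in Theorem~\ref{thm:varprinciple:abstract}, by showing that the hypothesis forces $p(x_0) \le \lambda$ and that this value lands below $\lambda_{e+}$, hence is a genuine eigenvalue. First I would recall that for every $x\in\mD(B^*)\setminus\{0\}$ the function $\mfs(\cdot)[x]$ is strictly decreasing on $(d,\infty)$ (Proposition~\ref{prop:prepforvariation}\ref{item:HypothesisVariational:iii}), tends to $-\infty$ (\ref{item:HypothesisVariational:v}), and has a unique zero $p(x)$ there; moreover $p(x)\ge a$ by \eqref{eq:schurAlowerbound}. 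Applying this to $x_0$: the assumption $\mfs(\lambda)[x_0]\le 0$ together with strict monotonicity gives $p(x_0)\le\lambda$, because the zero of a strictly decreasing function cannot lie to the right of a point where the function is already $\le 0$. Combined with $p(x_0)\ge a$ we get $p(x_0)\in[a,\lambda]$.

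Next I would invoke the variational principle. Taking the one-dimensional subspace $L=\linspan\{x_0\}$ in the definition \eqref{eq:def:mun} of $\mu_1$, we have $\mu_1 \le \max_{x\in L\setminus\{0\}} p(x) = p(x_0) \le \lambda < \lambda_{e+}$. By Theorem~\ref{thm:varprinciple:abstract}, $\mu_1 = \lambda_1$, the smallest element of the (extended) eigenvalue sequence of $\mT$ in $(d,\lambda_{e+})$. Since $\mu_1 = \lambda_1 < \lambda_{e+}$, the sequence is not the trivial constant sequence equal to $\lambda_{e+}$; that is, $\mT$ genuinely has at least one eigenvalue in $(d,\lambda_{e+})$, and the smallest one equals $\mu_1 = p(x_0) \in [a,\lambda]$.

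The only real subtlety is ensuring $\mu_1$ is attained as an actual eigenvalue rather than merely an infimum that could coincide with $\lambda_{e+}$: this is exactly what Theorem~\ref{thm:varprinciple:abstract} guarantees, since the strict inequality $p(x_0)<\lambda_{e+}$ (which follows from $\lambda < \lambda_{e+}$) places $\mu_1$ strictly inside the gap, and the theorem asserts that such values are eigenvalues of finite multiplicity. One should also note $p(x_0)\ge a$ uses that $\lambda\in[a,\lambda_{e+})$ is consistent with the lower bound, and that $a\le\lambda_{e+}$ as remarked after the theorem, so the interval $[a,\lambda]$ is nonempty and contained in $[a,\lambda_{e+})$. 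I expect no serious obstacle here; the main point is simply to chain together monotonicity of $\mfs(\cdot)[x_0]$ with the min-max formula, and the proof is short.
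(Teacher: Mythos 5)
Your proposal is correct and follows essentially the same route as the paper: bound $\mu_1$ from above by $p(x_0)$ via the one-dimensional subspace $\linspan\{x_0\}$, use monotonicity of $\mfs(\cdot)[x_0]$ to get $p(x_0)\le\lambda$, use $p(x)\ge a$ for the lower bound, and invoke Theorem~\ref{thm:varprinciple:abstract} to conclude $\mu_1=\lambda_1$ is a genuine eigenvalue since it lies strictly below $\lambda_{e+}$. The only quibble is the phrase ``the smallest one equals $\mu_1 = p(x_0)$'': one only gets $\mu_1\le p(x_0)$, but this does not affect the conclusion $\lambda_1\in[a,\lambda]$.
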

\begin{proof}
   Under the assumptions  we obtain that 
   \begin{equation*}
      a \le \mu_1 = 
      \min_{ \substack{ L\subseteq \mD(B^*) \\ \dim L = 1}}
      \max_{ \substack{ x\in L \\ x\neq 0}} p(x)
      \le p(x_0)
      \le \lambda.
      \qedhere
   \end{equation*}
\end{proof}

As an example, let us consider the special case when $A = -D = M\id$ for some constant $M > 0$. This is the case for the Dirac operator with a constant mass term.
\smallskip

The operator $BB^*$ is selfadjoint and non-negative by \cite[Ch.V, Thm.~3.24]{kato}. 
Therefore its spectrum below $\beta_e \coloneqq \inf\essspec(BB^*)$ consists of at most countably many non-negative eigenvalues of finite multiplicity, which may accumulate only at $\beta_e$.

\begin{proposition}

   \label{prop:ADequalMM}
   In the definition of $\mT$, let $A = -D = M\id$ for some constant $M>0$. Let
   $(\beta_n)_{n=1}^\infty$ be the increasing sequence of eigenvalues of $B B^*$ in $[0,\beta_e)$, counted with multiplicity, extended by $\beta_e$ if there are only finitely many.
   Then $(-M, M)\subseteq \rho(\mT)$ and 
    \begin{equation}
      \label{eq:pointspec:M}
      \lambda_{n} = \sqrt{M^2 + \beta_n},\qquad n\in\N.
   \end{equation}
   where $(\lambda_n)_{n=1}^\infty$ is the sequence of eigenvalues of $\mT$  in $[M, \lambda_{e+})$ as defined in Theorem~\ref{thm:varprinciple:abstract}. In particular,
   \begin{equation}
      \label{eq:essspec:M}
      \lambda_{e+} = \sqrt{M^2+\beta_e}.
   \end{equation}
\end{proposition}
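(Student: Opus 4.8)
The plan is to exploit the explicit form of the Schur complement when $A = -D = M\id$. With $D = -M\id$, for $\lambda \in (-M,\infty)$ we have $(D-\lambda)^{-1} = -(M+\lambda)^{-1}\id$, so the Schur complement computed in Proposition~\ref{prop:Schurfactorization} becomes
\begin{equation*}
   S(\lambda) = M\id - \lambda\id + \frac{1}{M+\lambda} BB^*,
\end{equation*}
and correspondingly the quadratic form is
\begin{equation*}
   \mfs(\lambda)[x] = (M-\lambda)\|x\|^2 + \frac{1}{M+\lambda}\|B^*x\|^2,\qquad x\in\mD(B^*).
\end{equation*}
First I would record that $(-M,M)\subseteq\rho(\mT)$: this is immediate from Corollary~\ref{cor:resolventab} with $d = -M$, $a = M$. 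Next, for $\lambda\in(-M,\infty)$ and $x\neq 0$, the equation $\mfs(\lambda)[x]=0$ is equivalent to $(M+\lambda)(\lambda-M)\|x\|^2 = \|B^*x\|^2$, i.e. $(\lambda^2 - M^2)\|x\|^2 = \|B^*x\|^2$. Hence the unique zero $p(x)$ of $\mfs(\cdot)[x]$ in $(-M,\infty)$ satisfies $p(x)^2 = M^2 + \|B^*x\|^2/\|x\|^2$, that is,
\begin{equation*}
   p(x) = \sqrt{M^2 + q(x)},\qquad q(x) \coloneqq \frac{\|B^*x\|^2}{\|x\|^2} = \frac{\scalar{BB^*x}{x}}{\|x\|^2}.
\end{equation*}

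The second step is to combine this identity with the variational characterization. Since $t\mapsto\sqrt{M^2+t}$ is strictly increasing on $[0,\infty)$, taking a max over a subspace $L$ commutes with it: $\max_{x\in L\setminus\{0\}} p(x) = \sqrt{M^2 + \max_{x\in L\setminus\{0\}} q(x)}$, and likewise the outer minimum over $n$-dimensional $L\subseteq\mD(B^*)$ commutes with the square root. Therefore, by the definition \eqref{eq:def:mun} of $\mu_n$ and Theorem~\ref{thm:varprinciple:abstract},
\begin{equation*}
   \lambda_n = \mu_n = \sqrt{M^2 + \min_{\substack{L\subseteq\mD(B^*)\\ \dim L = n}} \max_{\substack{x\in L\\ x\neq 0}} q(x)}.
\end{equation*}
Here $\mD(B^*)$ is exactly the form domain of the non-negative selfadjoint operator $BB^*$ (indeed $\mD(B^*) = \mD((BB^*)^{1/2})$), and $q(x)$ is precisely the Rayleigh quotient of $BB^*$. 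So the classical min-max principle \eqref{eq:classicMinMax} applied to the lower semibounded operator $BB^*$ identifies the inner expression as $\beta_n$, the $n$-th eigenvalue of $BB^*$ below $\beta_e$ (extended by $\beta_e$). This gives \eqref{eq:pointspec:M}.

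The third step is \eqref{eq:essspec:M}. One way: the map $t\mapsto\sqrt{M^2+t}$ is a homeomorphism $[0,\infty)\to[M,\infty)$, and $\lambda_{e+} = \lim_{n\to\infty}\lambda_n$ when there are infinitely many eigenvalues below it, while $\beta_e = \lim_{n\to\infty}\beta_n$ in the same situation, so passing to the limit in \eqref{eq:pointspec:M} yields $\lambda_{e+} = \sqrt{M^2+\beta_e}$; if there are only finitely many eigenvalues, both sequences are eventually constant equal to $\lambda_{e+}$ resp.\ $\beta_e$ and the identity holds trivially. Alternatively one invokes the spectral mapping relation $\essspec(S)\cap(-M,\infty)$ versus $\essspec(BB^*)$ coming from $S(\lambda) = (M-\lambda)\id + (M+\lambda)^{-1}BB^*$ together with Proposition~\ref{prop:Schurfactorization}; I would use the limit argument as it is cleanest.

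The only genuine subtlety — the step I would be most careful about — is the identification of $\mD(B^*)$ with the form domain of $BB^*$ and of $q$ with the form of $BB^*$, so that the classical min-max \eqref{eq:classicMinMax} applies verbatim. That $BB^*$ is selfadjoint and non-negative is already cited from \cite[Ch.V, Thm.~3.24]{kato}; what is needed in addition is $\mD((BB^*)^{1/2}) = \mD(B^*)$ with $\|(BB^*)^{1/2}x\| = \|B^*x\|$, which is the standard polar-decomposition fact for a closed densely defined $B$. Everything else is bookkeeping with the monotone function $\sqrt{M^2+\cdot}$.
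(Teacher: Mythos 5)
Your proposal is correct and follows essentially the same route as the paper: $(-M,M)\subseteq\rho(\mT)$ from Corollary~\ref{cor:resolventab}, the explicit computation $p(x)=\sqrt{M^2+\|B^*x\|^2/\|x\|^2}$, and then the classical min-max principle for $BB^*$ combined with Theorem~\ref{thm:varprinciple:abstract} via monotonicity of $t\mapsto\sqrt{M^2+t}$. Your extra care about $\mD(B^*)=\mD((BB^*)^{1/2})$ and the explicit limit argument for \eqref{eq:essspec:M} are points the paper leaves implicit, but they do not change the argument.
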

\begin{proof}
   That $(-M,M)\subseteq \rho(\mT)$ follows immediately from Corollary~\ref{cor:resolventab} with $a=-d = M$. Let $\lambda \ge M$ and $x\in\mD(B^*)\setminus{0}$, then
   \begin{align*}
      \mfs(\lambda)[x]
      &= \scalar{(A-\lambda)x}{x} - \scalar{(D-\lambda)^{-1} B^*x}{B^* x}
      \\
      &= (M-\lambda) \scalar{x}{x} + (M+\lambda)^{-1} \scalar{B^*x}{B^*x}
      \\
      &= \frac{\|x\|^2}{M+\lambda}  \left( M^2 - \lambda^2 +  \frac{\scalar{B^*x}{B^*x}}{\|x\|^2} \right).
   \end{align*}

   Hence $p(x) = \sqrt{M^2 + \frac{\scalar{B^*x}{B^*x}}{\|x\|^2}}$.
   The min-max principle for semibounded operators \cite[Thm. 12.1]{schmuedgen} shows that 
   \begin{equation*}
      \beta_n = 
      \min_{ \substack{ L\subseteq \mD(B^*) \\ \dim L = n}}
      \max_{ \substack{ x\in L \\ x\neq 0}} 
      \frac{\scalar{B^*x}{B^*x}}{\|x\|^2},
   \end{equation*}
   and from Theorem~\ref{thm:varprinciple:abstract} and \eqref{eq:def:mun} we conclude
\begin{equation*}
   \lambda_n
   = \min_{ \substack{ L\subseteq \mD(B^*) \\ \dim L = n}}
   \max_{ \substack{ x\in L \\ x\neq 0}} 
   \sqrt{ M^2 + \frac{\scalar{B^*x}{B^*x}}{\|x\|^2}}
   = \sqrt{ M^2 + \beta_n}\, .
   \qedhere
\end{equation*}
\end{proof}

\begin{remark}\label{rem:ADequalMM}
   Another way to prove Proposition~\ref{prop:ADequalMM} is to consider the square of $\mT$. Clearly
   \begin{equation*}
      (\mT-\lambda)(\mT+\lambda)
      = \mT^2 - \lambda^2
      = 
      \begin{pmatrix}
	 M^2 + BB^* & 0 \\
	 0 & M^2 + B^*B
      \end{pmatrix}
      -\lambda^2,
      \qquad
      \mD(\mT^2) = \mD(BB^*) \oplus \mD(B^*B),
   \end{equation*}
    where both $BB^*$ and $B^*B$ are non-negative selfadjoint operators by \cite[Ch.V, Thm.~3.24]{kato}. Then we can use 
    the partial isometry
    $$\begin{pmatrix}
    0&B\abs{B}^{-1}\\ B^*\abs{B^*}^{-1} &0    
    \end{pmatrix},$$
    which is unitary when defined from $(\ker B^*)^\perp\oplus(\ker B)^\perp$ to itself, to show that
    \begin{align*}
        \sigma(BB^*)\setminus\{0\} &= \sigma(B^*B)\setminus\{0\},\\
        \sigma(\mT)\setminus\{\pm M\} &= \sigma(-\mT)\setminus\{\pm M\},
    \end{align*}
    with the same holding for $\pointspec$ with equal multiplicity (see \cite[Sec. 5.2.3]{thaller}). 
    These results were obtained particularly for the Dirac operator $T$ in \cite{JakubskyKrejcirik2014} .
\end{remark}


\section{The Dirac operator on the real line} 
\label{sec:DiracRealLine}

In this section we use Theorem~\ref{thm:varprinciple:abstract} to give sufficient conditions for the existence of eigenvalues of the operator $T$ defined in Proposition~\ref{prop:defT}.
Recall that 
\begin{equation*}
   T = \begin{pmatrix}
      0 & \frac{\rd}{\rd x}  \\ -\frac{\rd}{\rd x} & 0
   \end{pmatrix}
   + V
   =
   \begin{pmatrix}
      0 & \frac{\rd}{\rd x}  \\ -\frac{\rd}{\rd x} & 0
   \end{pmatrix}
   +
   \begin{pmatrix}
      M_1 & W \\ W & -M_2
   \end{pmatrix}
   =
   \begin{pmatrix}
      M_1 & B \\ B^* & -M_2
   \end{pmatrix},\quad\mD(T)=H^1(\R)^2,
\end{equation*}
where $B= \frac{\rd}{\rd x } + W$, $B^*= -\frac{\rd}{\rd x } + W$, $\mD(B)=\mD(B^*)=H^1(\R)$ and the functions $M_1,M_2,W$ satisfy \ref{item:A1}, \ref{item:A2} \ref{item:A3}. Recall as well the definition of $\lambda_{e\pm}$ and $m_i$ given in \eqref{eq:def:lambda_e} and \ref{item:A2} respectively.
\smallskip

Clearly, $T$ satisfies all the hypotheses of the previous section.
The following proposition sums up the information that we have so far on the essential spectrum and resolvent set of $T$.

\begin{proposition}
   \label{prop:EssspecResT}
   Under the assumptions above, 
   \begin{equation*}
      \essspec(T) = (-\infty, \lambda_{e-}] \cup [\lambda_{e+}, \infty),
      \qquad
      (-m_2, m_1)\subseteq \rho(T).
   \end{equation*}
   A necessary condition for the existence of discrete spectrum of $T$ is that 
   $\lambda_{e-} < -m_2$ or $m_1 < \lambda_{e+}$.
\end{proposition}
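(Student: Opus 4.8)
The plan is to assemble the statement from results already in hand. The essential-spectrum identity is nothing but Proposition~\ref{prop:essspecT}(ii), so I would simply cite it. For the inclusion $(-m_2,m_1)\subseteq\rho(T)$, I would apply Corollary~\ref{cor:resolventab} to $T$ regarded as the abstract block operator matrix $\mathcal T$ of Section~\ref{sec:varprinciple}, with $A$ the (bounded, selfadjoint) multiplication operator by $M_1$, $D$ the multiplication operator by $-M_2$, and $B=\frac{\rd}{\rd x}+W$; as noted right after the statement of the proposition, all the structural hypotheses of that section are met. The point is to check that $(d,a)\supseteq(-m_2,m_1)$, where $a=\min\sigma(A)$ and $d=\max\sigma(D)$. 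Since the spectrum of a bounded real multiplication operator is the essential range of its symbol, $a=\min\sigma(A)$ is the essential infimum of $M_1$ (hence $a\ge\inf M_1=m_1$), and $d=\max\sigma(D)$ is $-1$ times the essential infimum of $M_2$ (hence $d\le-m_2$). With \ref{item:A2} this gives $d\le-m_2<m_1\le a$, so in particular $d<a$ and $(-m_2,m_1)\subseteq(d,a)\subseteq\rho(T)$.

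For the necessary condition, I would argue by what is left over. The discrete spectrum $\discspec(T)$ consists of isolated eigenvalues of finite multiplicity, hence is disjoint from both $\essspec(T)$ and $\rho(T)$. Feeding in the two facts just established together with the inequality chain \eqref{eq:lambdainequalities}, namely $\lambda_{e-}\le-m_2<m_1\le\lambda_{e+}$, one obtains
\begin{equation*}
   \discspec(T)\subseteq\R\setminus\bigl(\essspec(T)\cup(-m_2,m_1)\bigr)=(\lambda_{e-},-m_2]\cup[m_1,\lambda_{e+}).
\end{equation*}
This set is empty unless $\lambda_{e-}<-m_2$ or $m_1<\lambda_{e+}$; taking the contrapositive yields exactly the asserted necessary condition.

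I do not expect a genuine obstacle: the proposition is essentially bookkeeping on top of Proposition~\ref{prop:essspecT}, Corollary~\ref{cor:resolventab}, and \eqref{eq:lambdainequalities}. The only spot that deserves a line of justification is the identification of $a$ and $d$ above, which rests on the elementary fact that a bounded multiplication operator has spectrum equal to the essential range of its symbol; and since it enters only through the inequalities $a\ge m_1$ and $d\le-m_2$, it does not matter whether $\inf M_i$ agrees with the essential infimum of $M_i$.
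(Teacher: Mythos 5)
Your proposal is correct and matches the paper's intent exactly: the paper states this proposition without proof as a summary of Proposition~\ref{prop:essspecT}(ii) and Corollary~\ref{cor:resolventab} applied to $T$ (with the observation that $a\ge m_1$ and $d\le -m_2$), and the necessary condition is the same bookkeeping with \eqref{eq:lambdainequalities} that you carry out.
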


As an immediate consequence we obtain that for constant $M_j$ and $W$ vanishing at infinity the only possible eigenvalues of $T$ in $[\lambda_{e-}, \lambda_{e+}]=[-M_2,M_1]$ are $-M_2$ and $M_1$, which then clearly are not discrete eigenvalues.
If the conditions of Remark~\ref{rem:AbsCont} are satisfied, then $T$ has no embedded eigenvalues in the essential spectrum and therefore $-M_2$ and $M_1$ are the only possible eigenvalues of $T$.
\smallskip

The family of quadratic forms \eqref{eq:SchurformA} associated with $T$ for $\re(\lambda) > -m_2$ is
\begin{align*}
   \mfs(\lambda)[\phi] 
   &= \scalar{(M_1-\lambda) \phi}{\phi} + \scalar{(M_2+\lambda)^{-1}B^*\phi}{B^*\phi}
   \\
   &= \int_\R (M_1-\lambda) \abs{\phi}^2 + \frac{\abs{-\phi' + W\phi}^2}{M_2+\lambda}\,\rd x
   \\
   & =\int_\R \frac{-\det(V-\lambda)\abs{\phi}^2 + \abs{\phi'}^2- 2W\re(\phi\phi')}{M_2+\lambda}\,\rd x
\end{align*}
with $\mD(\mfs(\lambda)) =\mD(B^*) = H^1(\R)$.
Recall that the eigenvalues $\lambda_n$ of $T$ in $[m_1, \lambda_{e+})$ are simple and given by
\begin{equation*}
   \mu_n = 
   \min_{ \substack{ L\subseteq \mD(B^*) \\ \dim L = n}}
   \max_{ \substack{ \phi\in L \\ \phi\neq 0}}\ p(\phi),
   \qquad
   \mu_n' =
   \max_{ \substack{ L\subseteq \mH \\ \dim L = n-1}}
   \min_{ \substack{ \phi\in\mD(B^*)\cap L^\perp \\ \phi\neq 0}} p(\phi),
\end{equation*}
as long as $\mu_n < \lambda_{e+}$.
\smallskip

We now give a series of theorems about the existence, number and location of the eigenvalues of $T$ in $[m_1, \lambda_{e+})$. 
Analogous results can be obtained for the eigenvalues of $T$ in $(\lambda_{e-},-m_2]$ by considering the operator $\widetilde{T}$ and the unitary transformation given by
\begin{equation}
\label{eq:Ttransformed}
    \widetilde{T}\coloneqq\begin{pmatrix}
      M_2 &  \frac{\rd}{\rd x} - W \\
      -\frac{\rd}{\rd x} - W & -M_1
   \end{pmatrix},\qquad\begin{pmatrix}
     0 &  \id \\
      \id & 0
   \end{pmatrix}
   \widetilde{T}
   \begin{pmatrix}
     0 &  \id \\
      \id & 0
   \end{pmatrix}^{*} = -T.
\end{equation}
Alternatively, one could also use the factorization, analogous to \eqref{eq:SchurA},
\begin{equation}
\label{eq:SchurD}
    T - \lambda = 
    \begin{pmatrix}
        \id & 0 \\ B^*(M_1-\lambda)^{-1} & \id
    \end{pmatrix}
    \begin{pmatrix}
        M_1-\lambda & 0 \\ 0 & \widetilde S(\lambda)
    \end{pmatrix}
    \begin{pmatrix}
        \id & (M_1-\lambda)^{-1}B \\ 0 & \id 
    \end{pmatrix}
\end{equation}
with the Schur complement
\begin{align*}
   \begin{alignedat}{3}
    \widetilde S(\lambda) &= -M_2 -\lambda - B^*(M_1-\lambda)^{-1}B, \\ \mD(\widetilde S(\lambda)) &= \{ x\in \mD(B): (M_1-\lambda)^{-1}Bx \in\mD(B^*)\}, \end{alignedat}
      \qquad & \re\lambda < m_1.
   \end{align*}

We start with two theorems on the existence of at least one eigenvalue in $[m_1,\lambda_{e+})$. These are extensions of the Theorem presented in \cite[Sec.~2.2]{JakubskyKrejcirik2014}. 
We recover their result by setting $M_1(x)=M_2(x)=M>0$.
\begin{theorem}
   \label{thm:IntegrabilityCond}
      If
      $\left(\dfrac{ -\det(V-\lambda_{e+}) }{M_2+\lambda_{e+}}\right)^+\in L_1(\R)$ and if
      \begin{equation}
	 \label{eq:assNonnegative}
        \int_\R \dfrac{ -\det(V-\lambda_{e+})}{M_2+\lambda_{e+}}\, \rd x
      - \left(\dfrac{W_-}{\Mlim_{2-} + \lambda_{e+}}-\dfrac{W_+}{\Mlim_{2+} + \lambda_{e+}}\right)
      <0,
      \end{equation}
      then $T$ has at least one eigenvalue in $[m_1,\lambda_{e+})$.
\end{theorem}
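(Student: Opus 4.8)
The plan is to apply Corollary~\ref{cor:abstractEVexistencis} with the operator $\mT = T$, $a = m_1$, $d = -m_2$, and $\lambda_{e+}$ as given. By that corollary, it suffices to exhibit a single test function $\phi_0 \in \mD(B^*) = H^1(\R)$, $\phi_0 \neq 0$, with $\mfs(\lambda_{e+})[\phi_0] \le 0$; this yields an eigenvalue in $[m_1,\lambda_{e+}]$, and since $\lambda_{e+} \in \essspec(T)$ is not an eigenvalue of finite multiplicity (or rather, the eigenvalue produced lies in $[m_1,\lambda_{e+})$ because equality $\mu_1 = \lambda_{e+}$ would force the strict inequality in \eqref{eq:assNonnegative} to fail — I would phrase this carefully), we get an eigenvalue in $[m_1,\lambda_{e+})$.

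The natural choice of test function is one that is essentially constant on a large interval and decays at the ends, since the integrand of $\mfs(\lambda_{e+})$ has the form $\frac{1}{M_2+\lambda_{e+}}\bigl(-\det(V-\lambda_{e+})|\phi|^2 + |\phi'|^2 - 2W\re(\phi\phi')\bigr)$, and for $\phi \equiv 1$ the middle term $|\phi'|^2$ vanishes while the cross term $-2W\re(\phi\phi') = -2W\re(\phi')$ also vanishes — except that a genuine constant is not in $L_2(\R)$. So I would take a family $\phi_R$ that equals $1$ on $[-R,R]$ and is cut off to $0$ outside $[-R-\ell, R+\ell]$ for a fixed transition length $\ell$ (or a linear/smooth ramp), then compute $\mfs(\lambda_{e+})[\phi_R]$ as $R \to \infty$. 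The bulk term $\int_{-R}^{R} \frac{-\det(V-\lambda_{e+})}{M_2+\lambda_{e+}}\,dx$ converges to $\int_\R \frac{-\det(V-\lambda_{e+})}{M_2+\lambda_{e+}}\,dx$ (using the $L_1$ hypothesis on its positive part and the fact that outside a compact set $-\det(V-\lambda_{e+})/(M_2+\lambda_{e+})$ tends to $-\det(V_\pm - \lambda_{e+})/(M_{2\pm}+\lambda_{e+}) \le 0$ because $\lambda_{e+} \le \lambda_\pm^+$, so the negative tails only help). The cross term, after integrating $-2W\re(\phi_R \phi_R') = -W (|\phi_R|^2)'$ by parts — or rather recognizing $-2W\re(\phi\phi') \,dx = -W\,d(|\phi|^2)$ — contributes, in the limit, a boundary-type term $-W_-/(M_{2-}+\lambda_{e+}) + W_+/(M_{2+}+\lambda_{e+})$ from integrating by parts against $\bigl(\frac{W}{M_2+\lambda_{e+}}\bigr)'$; here I must be a little careful because $W/(M_2+\lambda_{e+})$ need not be of bounded variation, but the cutoff lives on $[\pm R, \pm(R+\ell)]$ where $\phi_R$ ramps between $1$ and $0$, and on those intervals $W$ and $M_2$ are close to their limits $W_\pm$, $M_{2\pm}$, so the ramp contributes $\int (\text{ramp of }|\phi_R|^2)' \cdot \frac{-W}{M_2+\lambda_{e+}} \to \pm\frac{W_\pm}{M_{2\pm}+\lambda_{e+}}\cdot(\text{total variation of }|\phi_R|^2\text{ on that side}) = \mp\frac{W_\pm}{M_{2\pm}+\lambda_{e+}}$. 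Finally the gradient term $\int |\phi_R'|^2/(M_2+\lambda_{e+})$ from the ramps stays bounded but does not vanish; to kill it I would instead let the ramp length $\ell \to \infty$ as well (e.g. a trapezoidal $\phi$ of plateau width $R$ and slope width $\ell$ with $\ell \to \infty$), so that $\int |\phi_R'|^2 \,dx \sim 2\ell \cdot (1/\ell)^2 = 2/\ell \to 0$, while the plateau width $R$ is sent to infinity first (or simultaneously at a slower rate) so the bulk integral still converges. With such a two-scale choice, $\lim \mfs(\lambda_{e+})[\phi_R] = \int_\R \frac{-\det(V-\lambda_{e+})}{M_2+\lambda_{e+}}\,dx - \bigl(\frac{W_-}{M_{2-}+\lambda_{e+}} - \frac{W_+}{M_{2+}+\lambda_{e+}}\bigr)$, which by \eqref{eq:assNonnegative} is strictly negative, so for $R$ large enough $\mfs(\lambda_{e+})[\phi_R] < 0$ and Corollary~\ref{cor:abstractEVexistencis} applies.

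The main obstacle I anticipate is the bookkeeping of the cross term and gradient term under the cutoff: making rigorous that the ramp contributions to $\int |\phi_R'|^2/(M_2+\lambda_{e+})$ vanish in the limit while the ramp contributions to the cross term converge to exactly the stated boundary values, without assuming bounded variation of $W$. The cleanest route is the two-scale trapezoidal profile described above, choosing the plateau half-width $R$ and the ramp width $\ell$ both tending to $\infty$ with $\ell/R \to 0$ (say), handled by dominated convergence for the bulk (dominated by the $L_1$ function $\bigl(\frac{-\det(V-\lambda_{e+})}{M_2+\lambda_{e+}}\bigr)^+$ plus the nonpositive tail which we simply bound above by $0$) and by the explicit $O(1/\ell)$ estimate for the ramp gradient. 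One subtlety to flag: we need $\lambda_{e+} > -m_2$ so that $M_2 + \lambda_{e+} > 0$ and the form $\mfs(\lambda_{e+})$ is defined and the denominators make sense — this holds by \eqref{eq:lambdainequalities} since $\lambda_{e+} \ge m_1 > -m_2$. A second subtlety: strictly, Corollary~\ref{cor:abstractEVexistencis} requires $\lambda_{e+} < \lambda_{e+}$, which is absurd, so I should instead apply it at a point $\lambda = \lambda_{e+} - \varepsilon$ for small $\varepsilon > 0$: by continuity of $\lambda \mapsto \mfs(\lambda)[\phi_R]$ (Proposition~\ref{prop:prepforvariation}\ref{item:HypothesisVariational:ii}) and strict monotonicity \ref{item:HypothesisVariational:iii}, $\mfs(\lambda_{e+})[\phi_R] < 0$ implies $\mfs(\lambda_{e+}-\varepsilon)[\phi_R] < 0$ for small $\varepsilon$, and then Corollary~\ref{cor:abstractEVexistencis} with $\lambda = \lambda_{e+}-\varepsilon \in [m_1, \lambda_{e+})$ gives an eigenvalue in $[m_1, \lambda_{e+}-\varepsilon] \subseteq [m_1, \lambda_{e+})$, which is exactly the claim.
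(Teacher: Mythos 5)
Your proposal is correct and follows essentially the same route as the paper's proof: the paper also uses a trapezoidal cutoff family (equal to $1$ on $[-n,n]$, linear ramps of width $n$, so both plateau and ramp widths tend to infinity), splits $\mfs(\lambda_{e+})[\phi_n]$ into the bulk, gradient, and cross terms, shows the gradient term is $O(1/n)$, computes the ramp contribution of the cross term directly from the convergence of $W/(M_2+\lambda_{e+})$ to its limits, handles the bulk term by splitting into positive and negative parts, and then invokes Corollary~\ref{cor:abstractEVexistencis}. Your extra care about applying the corollary at $\lambda=\lambda_{e+}$ (shifting to $\lambda_{e+}-\varepsilon$, or equivalently noting that strict negativity of $\mfs(\lambda_{e+})[\phi]$ forces $p(\phi)<\lambda_{e+}$) is a valid tightening of a point the paper leaves implicit.
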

\begin{remark}
   \label{rem:impossible}
    The condition $\lambda_-^+=\lambda_+^+$ is
    a necessary for $\left(\dfrac{ -\det(V-\lambda_{e+}) }{M_2+\lambda_{e+}}\right)^+\in L_1(\R)$.
    
    To see this, recall that $\lambda_{e+} = \min\{\lambda_+^+,\, \lambda_-^+\}$ and notice that the polynomial $\lambda\mapsto-\det(V_\pm-\lambda)$ is larger than $0$ for $\lambda\in (\lambda_\pm^-,\, \lambda_\pm^+)$.
    If $\lambda_-^+ < \lambda_+^+$ then $\lambda_{e+}=\lambda_-^+$ and therefore
    $\lambda_-^+ \in (\lambda_+^-,\, \lambda_+^+)$ by \eqref{eq:lambdainequalities}.
    Hence
    \begin{equation*}
        \lim_{x\to+\infty}\dfrac{ -\det(V-\lambda_{e+})}{M_2+\lambda_{e+}}= \dfrac{  -\det(V_+-\lambda_-^+)}{\Mlim_{2+}+\lambda_-^+}>0,
    \end{equation*}
    so we cannot have integrability at $+\infty$.
    If $\lambda_+^+ < \lambda_-^+$ then a similar argument shows that 
    \begin{equation*}
        \lim_{x\to-\infty}\dfrac{ -\det(V-\lambda_{e+})}{M_2+\lambda_{e+}}= \dfrac{ -\det(V_--\lambda_+^+)}{\Mlim_{2-}+\lambda_+^+}>0,
    \end{equation*}
    again showing non-integrability.
\end{remark}

\begin{proof}[Proof of Theorem~\ref{thm:IntegrabilityCond}]
   We will show that there exists a function $\phi\in H^1(\R)$ such that 
   $\mfs(\lambda_{e+})[\phi] < 0$.
   Then, by Corollary~\ref{cor:abstractEVexistencis}, the operator $T$ has an eigenvalue in $[m_1, \lambda_{e+})$.
   To this end, we define the sequence of piecewise linear functions
   \begin{equation*}
      \phi_n(x)\coloneqq
      \begin{cases}
	 0, & x \in (-\infty , -2n] \cup [2n , \infty),\\[1ex]
	  \dfrac{2n - |x| }{n} , & x \in (-2n , -n) \cup (n , 2n),\\[1ex]
	 1, & x \in [-n , n].
      \end{cases}
   \end{equation*}
   Clearly, $\phi_n\in H^1(\R)$ and 
   \begin{equation}
      \label{eq:forminlambda}
      \mfs(\lambda_{e+})[\phi_n]
      = \int_\R \dfrac{ -\det(V-\lambda_{e+})\phi_n^2}{M_2+\lambda_{e+}}\, \rd x
      +\int_\R\dfrac{ (\phi_n')^2}{M_2+\lambda_{e+}}\, \rd x
      -2\int_\R\dfrac{ W\phi_n \phi_n' }{M_2+\lambda_{e+}}\, \rd x . 
   \end{equation}
   The second and third terms on the right hand side of \eqref{eq:forminlambda} converge since
   \begin{align*}
      \int_\R \dfrac{(\phi_n')^2}{M_2+\lambda_{e+}}\, \rd x 
      & = \dfrac{1}{n^2}\left[\int_{-2n}^{-n} \dfrac{1}{M_2+\lambda_{e+}}\, \rd x
      +\int_{n}^{2n} \dfrac{1}{M_2+\lambda_{e+}}\rd x\right]
      \leq\dfrac{2}{ n (m_2+\lambda_{e+})} 
      \ \xrightarrow{n\to\infty}\ 0,
      \\[2ex]
      \int_\R \dfrac{ W\phi_n \phi_n' }{M_2+\lambda_{e+}}\, \rd x 
      & = \int_{-2n}^{-n} \dfrac{(x+2n)W}{n^2(M_2+\lambda_{e+})}\, \rd x 
      - \int_{n}^{2n} \dfrac{(2n-x)W}{n^2(M_2+\lambda_{e+})}\, \rd x
      \\ 
      &\hspace*{6cm} \xrightarrow{n\to\infty}\ 
      \frac{1}{2} \left(\dfrac{W_-}{\Mlim_{2-} + \lambda_{e+}}-\dfrac{W_+}{ \Mlim_{2+} + \lambda_{e+} }\right).
   \end{align*}
   Assumption \eqref{eq:assNonnegative} implies that the first term on the right hand side of \eqref{eq:forminlambda} also converges. By splitting the integrand into its positive and negative parts and then applying the monotone convergence theorem to both of them, to find that
   \begin{equation*}
      \int_\R \dfrac{  -\det(V-\lambda_{e+})\phi_n^2 }{M_2 + \lambda_{e+}}\, \rd x 
      \ \xrightarrow{n\to\infty} \
      \int_\R \dfrac{ -\det(V-\lambda_{e+})}{M_2 + \lambda_{e+}}\, \rd x.
   \end{equation*}
   Therefore
   \begin{equation*}
      \lim_{n\to\infty}\mfs(\lambda_{e+})[\phi_n]
      = \int_\R \dfrac{ -\det(V-\lambda_{e+})}{M_2+\lambda_{e+}}\, \rd x
      - \left(\dfrac{W_-}{\Mlim_{2-} + \lambda_{e+}}-\dfrac{W_+}{\Mlim_{2+} + \lambda_{e+}}\right)
      <0
   \end{equation*}
   as desired.
\end{proof}

\begin{theorem} 
   \label{thm:aEqualsEssspec}
    Suppose that $M_1, M_2, W$ are differentiable with bounded derivatives and that there exists $a\in\R$ such that 
    $\det(V(a)-\lambda_{e+}) = 0$. The following two sets of conditions are sufficient for the existence of an eigenvalue of $T$ in $[m_1,\, \lambda_{e+})$.
   \begin{enumerate}[label={\upshape (\roman*)}]
    \item $\left(\dfrac{ -\det(V-\lambda_{e+})}{ M_2 + \lambda_{e+}}\right)^+\in L_1(-\infty,a)$ and
    \begin{multline*}
	 \int_{-\infty}^a \dfrac{ -\det(V-\lambda_{e+})}{ M_2 + \lambda_{e+}}\, \rd x 
      -\dfrac{W_-}{ \Mlim_{2-} + \lambda_{e+} }
      +\sup_{x\in[a,\infty)} \dfrac{W}{ M_2 + \lambda_{e+} } \\
      + \frac{1}{2} 
      \left(\frac{9}{2} \right)^{1/3}
      \left[\sup_{x\in [a, \infty)}  
      \left( \dfrac{ -\det(V-\lambda_{e+}) }{ M_2 + \lambda_{e+}} \right)' 
      \right]^{1/3}
      \left[\sup_{x\in[a,\infty)}\dfrac{1}{M_2 + \lambda_{e+}}\right]^{2/3}<0.
      \end{multline*}
       \item $\left(\dfrac{ -\det(V-\lambda_{e+})}{ M_2 + \lambda_{e+}}\right)^+\in L_1(a,\infty)$ and
      \begin{multline*}
	 \int_{a}^\infty \dfrac{ -\det(V-\lambda_{e+})}{ M_2 + \lambda_{e+}}\, \rd x 
      +\dfrac{W_+}{ \Mlim_{2+} + \lambda_{e+} }
      +\sup_{x\in(-\infty,a]} \dfrac{-W}{ M_2 + \lambda_{e+} } \\
      + \frac{1}{2} 
      \left(\frac{9}{2} \right)^{1/3}
      \left[\sup_{x\in (-\infty,a]}  
      \left( \dfrac{ \det(V-\lambda_{e+}) }{ M_2 + \lambda_{e+}} \right)' 
      \right]^{1/3}
      \left[\sup_{x\in(-\infty,a]}\dfrac{1}{M_2 + \lambda_{e+}}\right]^{2/3}<0.
      \end{multline*}
   \end{enumerate}
\end{theorem}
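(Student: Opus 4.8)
The plan is to exhibit, in each of the two cases, a function $\phi\in H^1(\R)\setminus\{0\}$ with $\mfs(\lambda_{e+})[\phi]<0$; then $m_1\le\mu_1\le p(\phi)<\lambda_{e+}$, so Corollary~\ref{cor:abstractEVexistencis} (equivalently Theorem~\ref{thm:varprinciple:abstract}) yields an eigenvalue of $T$ in $[m_1,\lambda_{e+})$, exactly as in the proof of Theorem~\ref{thm:IntegrabilityCond}. It is convenient to abbreviate $g\coloneqq\frac{-\det(V-\lambda_{e+})}{M_2+\lambda_{e+}}$ and $\rho\coloneqq\frac{1}{M_2+\lambda_{e+}}$; both are bounded because $M_2+\lambda_{e+}\ge m_2+m_1>0$ by \ref{item:A2} and \eqref{eq:lambdainequalities}, $g$ is differentiable with bounded derivative by hypothesis, and $g(a)=0$ since $\det(V(a)-\lambda_{e+})=0$. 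On real $\phi\in H^1(\R)$ the form reads $\mfs(\lambda_{e+})[\phi]=\int_\R g\,\phi^2+\rho\,(\phi')^2-\frac{2W}{M_2+\lambda_{e+}}\,\phi\phi'\,\rd x$.

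For (i) I would fix a parameter $L>0$ and, for $n>|a|$, use the continuous piecewise‑linear test functions
\[
   \phi_n(x)\coloneqq
   \begin{cases}
      0,& x\le-2n,\\
      \tfrac{x+2n}{n},& -2n\le x\le-n,\\
      1,& -n\le x\le a,\\
      \bigl(1-\tfrac{x-a}{L}\bigr)^{+},& x\ge a ,
   \end{cases}
\]
which lie in $H^1(\R)\setminus\{0\}$; only the moving ramp $[-2n,-n]$ and the fixed tooth $[a,a+L]$ contribute to the derivative and cross terms. On the ramp $\int_{-2n}^{-n}\rho(\phi_n')^2\,\rd x=\frac1{n^2}\int_{-2n}^{-n}\rho\,\rd x\le\frac1{n(m_2+\lambda_{e+})}\to0$, and the substitution $x=-2n+nt$ with dominated convergence gives $\int_{-2n}^{-n}-\frac{2W}{M_2+\lambda_{e+}}\phi_n\phi_n'\,\rd x\to-\frac{W_-}{\Mlim_{2-}+\lambda_{e+}}$. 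Since $\phi_n^2\nearrow\chi_{(-\infty,a]}$ on $(-\infty,a]$, splitting $g=g^+-g^-$ and using monotone convergence on each piece gives $\int_{-\infty}^a g\,\phi_n^2\,\rd x\to\int_{-\infty}^a g\,\rd x\in[-\infty,\infty)$ (if this is $-\infty$ we are done immediately, so assume it finite). On the tooth, where $\phi_n$ equals $\psi(x)=1-\tfrac{x-a}{L}$, I would use $g(a)=0$ and $g'$ bounded to get $g(x)\le Q(x-a)$ on $[a,\infty)$ with $Q\coloneqq\sup_{[a,\infty)}g'$ ($\ge0$, else $g$ is unbounded below), whence $\int_a^{a+L}g\psi^2\,\rd x\le Q\int_0^L s(1-s/L)^2\,\rd s=\tfrac{QL^2}{12}$, $\int_a^{a+L}\rho(\psi')^2\,\rd x\le\tfrac RL$ with $R\coloneqq\sup_{[a,\infty)}\rho$, and, because $-2\psi\psi'=-(\psi^2)'\ge0$ integrates to $1$, $\int_a^{a+L}-\tfrac{2W}{M_2+\lambda_{e+}}\psi\psi'\,\rd x\le\sup_{[a,\infty)}\tfrac{W}{M_2+\lambda_{e+}}$. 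Adding these and letting $n\to\infty$,
\begin{align*}
   \limsup_{n\to\infty}\mfs(\lambda_{e+})[\phi_n]
   &\le\int_{-\infty}^a g\,\rd x-\frac{W_-}{\Mlim_{2-}+\lambda_{e+}}+\sup_{[a,\infty)}\frac{W}{M_2+\lambda_{e+}} \\
   &\quad +\frac{QL^2}{12}+\frac RL .
\end{align*}

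The last move is to optimize over $L$: $\min_{L>0}\bigl(\tfrac{QL^2}{12}+\tfrac RL\bigr)$ is attained at $L=(6R/Q)^{1/3}$ with value $\tfrac32\,6^{-1/3}Q^{1/3}R^{2/3}=\tfrac12\bigl(\tfrac92\bigr)^{1/3}Q^{1/3}R^{2/3}$ (when $Q=0$ one lets $L\to\infty$, killing both terms). With this $L$ the right‑hand side is exactly the left‑hand side of the inequality in (i), hence $<0$, so $\mfs(\lambda_{e+})[\phi_n]<0$ for all large $n$ and (i) follows. Part (ii) is the left--right mirror image: test functions equal to $1$ on $[a,n]$, descending linearly to $0$ on $[n,2n]$ and vanishing beyond, with a short tooth $\bigl(1-\tfrac{a-x}{L}\bigr)^{+}$ rising to $1$ at $a$ on $(-\infty,a]$; the ramp then contributes $+\tfrac{W_+}{\Mlim_{2+}+\lambda_{e+}}$, the tooth contributes $\sup_{(-\infty,a]}\tfrac{-W}{M_2+\lambda_{e+}}$ to the cross term, and $g(a)=0$ gives $g(x)\le\bigl(\sup_{(-\infty,a]}(-g')\bigr)(a-x)=\bigl(\sup_{(-\infty,a]}\bigl(\tfrac{\det(V-\lambda_{e+})}{M_2+\lambda_{e+}}\bigr)'\bigr)(a-x)$ for $x\le a$, so the same $L$‑optimization reproduces the inequality in (ii). The step I expect to demand most care is the cross‑term limit on the moving ramp (the crude bound carries a spurious factor $n$ that cancels because the integrand is a total derivative) together with pinning down the constant $\tfrac12(9/2)^{1/3}$ --- which is exactly what a tent, not an exponential, profile on the tooth produces --- and keeping the signs straight so that each one‑sided supremum lands on the correct side; everything else is routine monotone/dominated convergence and an elementary minimization.
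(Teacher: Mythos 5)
Your proposal is correct and follows essentially the same route as the paper: the same piecewise-linear test functions (a moving ramp on $[-2n,-n]$, a plateau up to $a$, and a fixed descending tooth of width $L=b-a$), the same three-term split and limits, and the same optimization in $L$ yielding the constant $\tfrac12(9/2)^{1/3}$. The only cosmetic difference is that you bound $\int_a^{a+L}g\psi^2$ via the pointwise estimate $g(x)\le Q(x-a)$ coming from $g(a)=0$, whereas the paper integrates by parts; both give the identical bound $QL^2/12$.
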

\begin{remark}
    As in Remark~\ref{rem:impossible}, $\lambda_-^+\leq \lambda_+^+$ is necessary for (i), and $\lambda_+^+\leq \lambda_-^+$ is necessary for (ii).
\end{remark}
\begin{proof}[Proof of Theorem~\ref{thm:aEqualsEssspec}]
   We prove only (i), since the proof of (ii) is analogous. 
   For $b>a$ we define $\phi_n\in H^1(\R)$ by
   \begin{equation*}
      \phi_n(x)\coloneqq
      \begin{cases}
	 0, & x \in (-\infty , -2n] \cup [b , \infty),\\[1ex]
	 \dfrac{x+2n}{n}, & x \in (-2n , -n),\\[1ex]
	 1, & x \in [-n , a],\\[1ex]
	 \dfrac{b-x}{b-a}, & x \in (a , b).
      \end{cases}
   \end{equation*}
    We split $\mfs(\lambda_{e+})[\phi_n]$ into three terms as in \eqref{eq:forminlambda} and take the limits separately. For the last two terms we have
   \begin{align*}
   \lim_{n\to\infty} \int_\R \dfrac{(\phi_n')^2}{M_2 + \lambda_{e+}}\, \rd x
    &= \dfrac{1}{(b-a)^2}\int_a^b \dfrac{1}{M_2 + \lambda_{e+}}\, \rd x
    \leq \dfrac{1}{(b-a)}\sup_{x\in[a,\infty)}\dfrac{1}{M_2 + \lambda_{e+}},\\[2ex]
      \lim_{n\to\infty} -2\int_\R \dfrac{ W\phi_n \phi_n' }{ M_2 + \lambda_{e+} }\, \rd x
      &= -\dfrac{W_-}{ \Mlim_{2-} + \lambda_{e+} } + \dfrac{2}{b-a}\int_a^b \dfrac{W}{ M_2 + \lambda_{e+} } \dfrac{b-x}{b-a}\, \rd x\\
      &\leq -\dfrac{W_-}{ \Mlim_{2-} + \lambda_{e+} }
      +\sup_{x\in[a,\infty)} \dfrac{W}{ M_2 + \lambda_{e+} }.
   \end{align*}
For the first term we have  
    \begin{multline*}
      \lim_{n\to\infty} \int_\R \dfrac{  -\det(V-\lambda_{e+}) \phi_n^2 }{ M_2 + \lambda_{e+}}\, \rd x
      \\
      \begin{aligned}[t]
	 & = \lim_{n\to\infty} \int_{-\infty}^a \dfrac{ -\det(V-\lambda_{e+}) \phi_n^2 }{ M_2 + \lambda_{e+}}\, \rd x
	 +
	 \int_{a}^b \dfrac{  -\det(V-\lambda_{e+}) }{ M_2 + \lambda_{e+}}\left(\frac{b-x}{b-a}\right)^2\, \rd x
	 \\
	 & = \int_{-\infty}^a \dfrac{  -\det(V-\lambda_{e+}) }{ M_2 + \lambda_{e+}}\, \rd x
	 +
	 \int_{a}^b \left( \dfrac{  -\det(V-\lambda_{e+})}{ M_2 + \lambda_{e+}} \right)' \frac{ (b-x)^3 }{ 3(b-a)^2 } \, \rd x
	 \\
	 & \leq \int_{-\infty}^a \dfrac{ -\det(V-\lambda_{e+})}{ M_2 + \lambda_{e+}}\, \rd x
	 +
	 \frac{ (b-a)^2 }{ 12 }
	 \sup_{x\in [a, \infty)} \left( \dfrac{ -\det(V-\lambda_{e+})}{ M_2 + \lambda_{e+}} \right)' 
      \end{aligned}
   \end{multline*}
   where we used the assumption
   $\det(V(a)-\lambda_{e+}) = 0$ when we performed the integration by parts.
   \smallskip

   Hence
   \begin{align*}
      \lim_{n\to\infty}\mfs(\lambda_{e+})[\phi_n]
      & \leq 
      \int_{-\infty}^a \dfrac{ -\det(V-\lambda_{e+})}{ M_2 + \lambda_{e+}}\, \rd x 
      -\dfrac{W_-}{ \Mlim_{2-} + \lambda_{e+} }
      +\sup_{x\in[a,\infty)} \dfrac{W}{ M_2 + \lambda_{e+} } 
      \\
      & \phantom{=\ }+ \frac{ (b-a)^2 }{ 12 } \sup_{x\in [a, \infty)} 
      \left( \dfrac{ -\det(V-\lambda_{e+})}{ M_2 + \lambda_{e+}} \right)'
      +\dfrac{1}{(b-a)}\sup_{x\in[a,\infty)}\dfrac{1}{M_2 + \lambda_{e+}}
      .
   \end{align*}
   Note that $\displaystyle\sup_{x\in[a,\infty)}\dfrac{1}{M_2+\lambda_{e+}}>0$ 
   and 
   $\displaystyle\sup_{x\in[a,\infty)}
   \left( \dfrac{ -\det(V-\lambda_{e+})}{ M_2 + \lambda_{e+}} \right)' 
   \geq0$, 
   therefore
   \begin{equation*}
      \frac{ (b-a)^2 }{ 12 } \sup_{x\in [a, \infty)} 
      \left( \dfrac{ -\det(V-\lambda_{e+})}{ M_2 + \lambda_{e+}} \right)'
      +\dfrac{1}{(b-a)}\sup_{x\in[a,\infty)}\dfrac{1}{M_2 + \lambda_{e+}}
   \end{equation*}
   is minimized at
   $b_0 = a + \left[\sup_{x\in[a,\infty)}\dfrac{6}{M_2+\lambda_{e+}}\right]^{1/3}\left[\sup_{x\in[a,\infty)}
   \left( \dfrac{ -\det(V-\lambda_{e+})}{ M_2 + \lambda_{e+}} \right)'\right]^{-1/3}$.
   \\
   Evaluation at $b_0$ yields
   \begin{align*}
      \lim_{n\to\infty}\mfs(\lambda_{e+})[\phi_n]
      & \leq 
      \int_{-\infty}^a \dfrac{ -\det(V-\lambda_{e+})}{ M_2 + \lambda_{e+}}\, \rd x 
      -\dfrac{W_-}{ \Mlim_{2-} + \lambda_{e+} }
      +\sup_{x\in[a,\infty)} \dfrac{W}{ M_2 + \lambda_{e+} } \\
      & \phantom{=\; \ }
      + \frac{1}{2} 
      \left(\frac{9}{2} \right)^{1/3}
      \left[\sup_{x\in [a, \infty)}  
      \left( \dfrac{ -\det(V-\lambda_{e+}) }{ M_2 + \lambda_{e+}} \right)' 
      \right]^{1/3}
      \left[\sup_{x\in[a,\infty)}\dfrac{1}{M_2 + \lambda_{e+}}\right]^{2/3}<0.
   \end{align*}
\end{proof}

Next we compare the eigenvalues of the Dirac operator $T$ with those of Sturm-Liouville operators constructed from the operator $B=\frac{\rd}{\rd x}+W$ and its adjoint, or their restrictions to an interval. For an open interval $I$, we define the quantities
\begin{equation}\label{eq:def:m&M}
    m_{i,I}\coloneqq\inf_{x\in I}M_i(x),\qquad \Msup_{i,I}\coloneqq\sup_{x\in I}M_i(x),\qquad i=1,2,
\end{equation}
and the functions
\begin{align}
   f_I(\lambda) &\coloneqq ( \lambda-\Msup_{1,I})(m_{2,I} + \lambda),\qquad\lambda\in[\Msup_{1,I},\infty),\label{eq:def:f}
   \\
   g_I(\lambda) &\coloneqq (\lambda-m_{1,I})(\Msup_{2,I} + \lambda),\qquad\lambda\in[m_{1,I},\infty), \label{eq:def:g}
\end{align}
and remark that $f_I,g_I$ are both strictly increasing bijections onto $[0,\infty)$. When $I=\R$ we will omit the subscript $I$. Note that $m_{i,\R} = m_i$ as defined in \ref{item:A2}.
\smallskip

From the case $M_1(x)=M_2(x)=M>0$, we can extract information about the spectrum of the operator $BB^*$ as follows. 
Proposition \ref{prop:ADequalMM}, Remark \ref{rem:ADequalMM} and \eqref{eq:def:lambda_e} easily imply that $\essspec(BB^*)=[\beta_e,\infty)$ where $\beta_e=\inf \essspec(BB^*)=\min\{ W_-^2,\, W_+^2\}$. 
In addition, since every eigenvalue of $T$ is simple, as shown in Proposition \ref{prop:defT}, the same is true for the eigenvalues of $BB^*$ in $[0,\beta_e)$.
\smallskip

The estimates in the next theorem for the eigenvalues of $T$ have already been shown by Kraus, Langer and Tretter in \cite[Thm. 4.7]{KLT04}. In our case however we can simplify their proof slightly.
In the special case when $M_1(x)=M_2(x)=M>0$, we recover the results from Proposition~\ref{prop:ADequalMM}.

\begin{theorem}
\label{thm:teoremafavoritoglobal}
   Let $(\lambda_n)_{n=1}^\infty$ be the eigenvalues of $T$ in $[m_1, \lambda_{e+})$ and 
   $(\beta_n)_{n=1}^\infty$ be the eigenvalues of $BB^*$ in $[0, \beta_e)$.
    \begin{enumerate}[label={\upshape (\roman*)}]
    
    \item $0\leq\beta_e\leq g(\lambda_{e+})$ and for $n=1,\ldots, \dim\mL_{[0,\beta_e)}BB^*$ we have
    $$\lambda_n\geq g^{-1}(\beta_n)= \frac{ m_1 - \Msup_2 }{2}  
      + \sqrt{\left( \frac{ m_1 + \Msup_2 }{2} \right)^2 +  \beta_n }\ .$$
      In particular, $\dim \mL_{[m_1, \lambda)}T
      \leq\dim \mL_{[0,g(\lambda))}BB^*$ for any $\lambda\in[m_1, g^{-1}(\beta_e)]$.
      
      \item If $\Msup_1 \leq \lambda_{e+}$, then $0\leq f(\lambda_{e+})\leq\beta_e$ and for $n=1,\ldots, \dim\mL_{[0,f(\lambda_{e+}))}BB^*$ we have
      $$\lambda_n\leq f^{-1}(\beta_n)=  \frac{ \Msup_1 - m_2 }{2}  
      + \sqrt{  \left( \frac{\Msup_1 + m_2 }{2} \right)^2 +\beta_n}\ .$$
      In particular, 
      $\dim \mL_{[m_1, \lambda)}T\geq\dim \mL_{[0,f(\lambda))}BB^*$ for any $\lambda\in[m_1, \lambda_{e+}]$, where we use the convention $f(\lambda)=0$ for $\lambda\leq\Msup_1$.
    \end{enumerate}
\end{theorem}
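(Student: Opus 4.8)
The plan is to compare the Rayleigh-type quotient $p(\phi)$ governing the eigenvalues of $T$ with the Rayleigh quotient $\phi\mapsto\|B^*\phi\|^2/\|\phi\|^2$ governing the eigenvalues of $BB^*$, using the variational characterizations from Theorem~\ref{thm:varprinciple:abstract} and the classical min-max principle. Fix $\phi\in H^1(\R)\setminus\{0\}$ and write $r(\phi)\coloneqq\|B^*\phi\|^2/\|\phi\|^2\ge 0$. Recall that $p(\phi)\ge m_1$ is the unique zero in $(-m_2,\infty)$ of $\lambda\mapsto\mfs(\lambda)[\phi]=\scalar{(M_1-\lambda)\phi}{\phi}+\scalar{(M_2+\lambda)^{-1}B^*\phi}{B^*\phi}$. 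The key pointwise estimate is: for $\lambda\ge m_1$,
\begin{equation*}
   \mfs(\lambda)[\phi]\ \le\ (\Msup_1-\lambda)\|\phi\|^2+\frac{\|B^*\phi\|^2}{m_2+\lambda}\ =\ \frac{\|\phi\|^2}{m_2+\lambda}\bigl(-g_\R(\lambda)+r(\phi)\bigr),
\end{equation*}
using $M_1\le\Msup_1$ and $M_2\ge m_2$ (so $(M_2+\lambda)^{-1}\le(m_2+\lambda)^{-1}$), together with the identity $(\Msup_1-\lambda)(m_2+\lambda)=-g_\R(\lambda)$. Likewise, when $\Msup_1\le\lambda_{e+}$ and for $\lambda\in[\Msup_1,\lambda_{e+})$,
\begin{equation*}
   \mfs(\lambda)[\phi]\ \ge\ (m_1-\lambda)\|\phi\|^2+\frac{\|B^*\phi\|^2}{\Msup_2+\lambda}\ =\ \frac{\|\phi\|^2}{\Msup_2+\lambda}\bigl(-f_\R(\lambda)+r(\phi)\bigr),
\end{equation*}
using $M_1\ge m_1$, $M_2\le\Msup_2$, and $(m_1-\lambda)(\Msup_2+\lambda)=-f_\R(\lambda)$ (note $m_1-\lambda\le 0$ here, which is why the direction of the $M_1$ estimate flips).

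\textbf{From pointwise estimates to the eigenvalue bounds.} For part (i): since $\mfs(\cdot)[\phi]$ is strictly decreasing with zero at $p(\phi)$, the first displayed inequality shows that $\mfs(\lambda)[\phi]\le 0$ whenever $g_\R(\lambda)\ge r(\phi)$, i.e. whenever $\lambda\ge g_\R^{-1}(r(\phi))$ (recall $g_\R$ is an increasing bijection of $[m_1,\infty)$ onto $[0,\infty)$); hence $p(\phi)\le g_\R^{-1}(r(\phi))$, equivalently $g_\R(p(\phi))\le r(\phi)$. Taking $\min_{\dim L=n}\max_{\phi\in L\setminus\{0\}}$ on both sides and using that $g_\R$ is increasing and continuous, we get $g_\R(\mu_n)\le\beta_n$ for all $n$ for which $\mu_n<\lambda_{e+}$, which by Theorem~\ref{thm:varprinciple:abstract} gives $\lambda_n\ge g_\R^{-1}(\beta_n)$ once we know $\beta_n<g_\R(\lambda_{e+})$; the explicit radical formula for $g_\R^{-1}$ is just the quadratic formula. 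The bound $\beta_e\le g_\R(\lambda_{e+})$ follows by the same comparison applied at the level of essential spectra, or by letting $n\to\infty$; one should also invoke Proposition~\ref{prop:ADequalMM}-style reasoning or a Weyl-sequence argument to identify $\inf\essspec(BB^*)$ correctly. The dimension count $\dim\mL_{[m_1,\lambda)}T\le\dim\mL_{[0,g_\R(\lambda))}BB^*$ is then the standard reformulation: $\lambda_n<\lambda$ forces $\beta_n\le g_\R(\lambda_n)<g_\R(\lambda)$.

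\textbf{Part (ii) and the main obstacle.} Part (ii) runs the same way with the second pointwise estimate, yielding $f_\R(p(\phi))\ge r(\phi)$ hence $p(\phi)\ge f_\R^{-1}(r(\phi))$, and then $\lambda_n\le f_\R^{-1}(\beta_n)$ after taking min-max; the convention $f_\R(\lambda)=0$ for $\lambda\le\Msup_1$ handles the range where the estimate is vacuous. The step that needs the most care is the bookkeeping of \emph{where} the comparisons are valid: the second estimate only holds for $\lambda\ge\Msup_1$, so one must check that $p(\phi)\ge\Msup_1$ is not needed on the nose — rather, if $p(\phi)<\Msup_1$ then trivially $p(\phi)<\Msup_1=f_\R^{-1}(0)\le f_\R^{-1}(\beta_n)$ and there is nothing to prove, while if $p(\phi)\ge\Msup_1$ the estimate applies at $\lambda=p(\phi)$. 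Similarly one must make sure the extraction of $\beta_n<f_\R(\lambda_{e+})$ (needed so that $\beta_n$ is genuinely a discrete eigenvalue of $BB^*$ in the relevant range) is consistent, and that passing the min-max through the monotone functions $f_\R,g_\R$ is legitimate when the sequences are truncated/extended by $\lambda_{e+}$ resp.\ $\beta_e$. I expect this indexing-and-truncation argument, rather than any analytic difficulty, to be the real work; the inequalities themselves are one-line estimates once the right $\lambda$ is plugged in.
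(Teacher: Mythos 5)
Your two pointwise form estimates are exactly the ones the paper's proof is built on, but you have wired them up backwards, and as written neither half of your argument is a valid chain of implications. First, the labels: with the definitions \eqref{eq:def:f}--\eqref{eq:def:g} one has $(\Msup_1-\lambda)(m_2+\lambda)=-f(\lambda)$ and $(m_1-\lambda)(\Msup_2+\lambda)=-g(\lambda)$, so both of your ``identities'' have $f$ and $g$ interchanged. This is not merely notation, because the two estimates push $p(\phi)$ in opposite directions: the upper bound $\mfs(\lambda)[\phi]\le\frac{\|\phi\|^2}{m_2+\lambda}\bigl(-f(\lambda)+r(\phi)\bigr)$ shows $\mfs$ is already nonpositive at $\lambda=f^{-1}(r(\phi))$, hence $p(\phi)\le f^{-1}(r(\phi))$ and, via min-max, the \emph{upper} bounds $\lambda_n\le f^{-1}(\beta_n)$ of part (ii); the lower bound $\mfs(\lambda)[\phi]\ge\frac{\|\phi\|^2}{\Msup_2+\lambda}\bigl(-g(\lambda)+r(\phi)\bigr)$ gives $p(\phi)\ge g^{-1}(r(\phi))$ and hence the \emph{lower} bounds of part (i). You have assigned each estimate to the opposite part, and to force the conclusions to match the theorem you then invert an inequality: from ``$g(\mu_n)\le\beta_n$'' you conclude ``$\lambda_n\ge g^{-1}(\beta_n)$'', whereas monotonicity of $g$ gives $\mu_n\le g^{-1}(\beta_n)$; symmetrically, in your part (ii) you derive $p(\phi)\ge f_\R^{-1}(r(\phi))$ and then claim $\lambda_n\le f_\R^{-1}(\beta_n)$. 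Once the pairing and the directions are corrected, your argument is the paper's: for part (i) the paper chooses in each $n$-dimensional $L$ a $\psi_L\perp\{\phi_1,\dots,\phi_{n-1}\}$ (eigenfunctions of $BB^*$), so that $\|B^*\psi_L\|^2\ge\beta_n\|\psi_L\|^2$ and $p(\psi_L)\ge g^{-1}(\beta_n)$; for part (ii) it tests on $L=\linspan\{\phi_1,\dots,\phi_n\}$. Your observation that the case $p(\phi)<\Msup_1$ is vacuous is fine but unnecessary: the polynomial inequality holds for all $\lambda>-m_2$, and one simply evaluates $\mfs$ at $\widetilde\lambda_n=f^{-1}(\beta_n)\ge\Msup_1$.

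A second, smaller gap: the endpoint inequalities $\beta_e\le g(\lambda_{e+})$ and $f(\lambda_{e+})\le\beta_e$, which you dismiss as ``the same comparison at the level of essential spectra'' or ``letting $n\to\infty$'', do not follow that way (there may be only finitely many discrete eigenvalues, and the comparison of Rayleigh quotients says nothing directly about the essential spectra). They are needed to guarantee that $\widetilde\lambda_n=g^{-1}(\beta_n)$, resp. $f^{-1}(\beta_n)$, actually lies in $[m_1,\lambda_{e+})$ for the stated range of $n$. The paper proves them explicitly: $\beta_e=\inf\essspec(BB^*)=\min\{W_-^2,W_+^2\}$ by Proposition~\ref{prop:ADequalMM} and Remark~\ref{rem:ADequalMM}, and the function $\eta(x,y,z)=\frac{x-y}{2}+\sqrt{(\frac{x+y}{2})^2+z}$ is increasing in $x,z$ and decreasing in $y$, so $\lambda_{e+}=\min\{\eta(\Mlim_{1\pm},\Mlim_{2\pm},W_\pm^2)\}\ge\eta(m_1,\Msup_2,\beta_e)=g^{-1}(\beta_e)$, and $\lambda_{e+}\le\eta(\Msup_1,m_2,\beta_e)=f^{-1}(\beta_e)$ when $\Msup_1\le\lambda_{e+}$. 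This computation should be supplied rather than gestured at.
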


\begin{proof}\leavevmode
\begin{enumerate}[label={\upshape (\roman*)}]
    \item Notice that $g^{-1}(\beta)=\eta(m_1,\Msup_2,\beta)$ where
    $$\eta(x,y,z)\coloneqq\frac{x-y}{2}+\sqrt{\left(\frac{x+y}{2}\right)^2+z}\ .$$
    Differentiation shows that, on the set $\R^2\times\R_+$, $\eta$ is strictly increasing in $x,z$ and strictly decreasing in $y$. Therefore we have
    \begin{align*}
        \lambda_{e+}=\min\{\lambda^+_+,\, \lambda^+_-\}
        &=\min\left\{\eta(\Mlim_{1+},\Mlim_{2+},W_{+}^2),\ \eta(\Mlim_{1-},\Mlim_{2-},W_{-}^2)\right\}\\
        &\geq \min\left\{\eta(m_{1},\Msup_{2},W_{+}^2),\ \eta(m_{1},\Msup_{2},W_{-}^2)\right\}\\
        &=\eta(m_{1},\Msup_{2},\min\{W_{\pm}^2\})\\
        &=\eta(m_{1},\Msup_{2},\beta_e)
        =g^{-1}(\beta_e)\geq m_1,
    \end{align*}
    and, after applying $g$, $0\leq\beta_e\leq g(\lambda_{e+})$.
    
    Let $\phi_1, \dots, \phi_n$ be eigenfunctions of $BB^*$ with eigenvalues $0\leq \beta_1 < \beta_2 < \cdots <\beta_n < \beta_e$. Then there exist unique 
   $m_1 \le \widetilde \lambda_1 < \widetilde \lambda_2 < \cdots < \widetilde \lambda_n <g^{-1}(\beta_e)\leq\lambda_{e+}$ 
   such that $g(\widetilde\lambda_j) = \beta_j$.
   
   Let $L\subseteq \mD(B^*)$ with $\dim L = n$.
   Then we can choose a normalized $\psi_L\in L$ such that $\psi_L\perp \{ \phi_1, \dots, \phi_{n-1}\}$,
   in particular $\hp{B^*\psi_L}{B^*\psi_L}\geq\beta_n$. With these observations we have
   \begin{align*}
      \mfs(\lambda)[\psi_L]
      &= \int_\R (M_1 - \lambda) \abs{\psi_L}^2\, \rd x
      +\int_\R \frac{\abs{ -\psi_L' + W \psi_L }^2}{ M_2 + \lambda}\, \rd x
      \\
      &\geq \left(m_1 - \lambda\right) \int_\R\abs{\psi_L}^2\, \rd x
      +\frac{1}{ \Msup_2 + \lambda} \int_\R\abs{ -\psi_L' + W \psi_L}^2\, \rd x
      \\
      &= (m_1 - \lambda) + \frac{\scalar{B^*\psi_L}{ B^*\psi_L}}{ \Msup_2 + \lambda }\\
      &= \frac{ -g (\lambda) + \scalar{B^* \psi_L}{B^* \psi_L}}{  \Msup_2 + \lambda }\\
      &\geq \frac{ -g (\lambda) + \beta_n}{  \Msup_2 + \lambda },
   \end{align*}
    which implies $p(\psi_L)\geq\widetilde\lambda_n$. Since $L$ is arbitrary, it follows that
\begin{equation*}
    \lambda_{n} = 
      \min_{ \substack{ L\subseteq \mD(B^*) \\ \dim L = n} }
      \max_{ \substack{ \phi\in L \\ \phi\neq 0} }\ p(\phi)
      \ge
      \min_{ \substack{ L\subseteq \mD(B^*) \\ \dim L = n} }
      p(\psi_L)
      \ge \widetilde \lambda_{n}=g^{-1}(\beta_n).
\end{equation*} 

\item Notice that $f^{-1}(\beta)=\eta(\Msup_1,m_2,\beta)$. 
Therefore we have
\begin{align*}
    \Msup_1\leq \lambda_{e+}
    =\min\{\lambda^+_+,\ \lambda^+_-\}
    &=\min\left\{\eta(\Mlim_{1+},\Mlim_{2+},W_{+}^2),\ \eta(\Mlim_{1-},\Mlim_{2-},W_{-}^2)\right\}\\ 
    &\leq \min\left\{\eta(\Msup_{1},m_2,W_{+}^2),\ \eta(\Msup_{1},m_2,W_{-}^2)\right\}\\
    &=\eta(\Msup_{1},m_2,\min\{W_{\pm}^2\})\\
    &=\eta(\Msup_{1},m_2,\beta_e)
    =f^{-1}(\beta_e),
    \end{align*}
    and, after applying $f$, $0\leq f(\lambda_{e+})\leq \beta_e$.

   Let $\phi_1, \dots, \phi_n$ be eigenfunctions of $BB^*$ with eigenvalues $0\leq \beta_1 < \beta_2 < \cdots <\beta_n < f(\lambda_{e+})$.
   Then there exist unique 
   $\Msup_1 \le \widetilde \lambda_1 < \widetilde \lambda_2 < \cdots < \widetilde \lambda_n < \lambda_{e+}\leq f^{-1}(\beta_e)$ 
   such that $f(\widetilde\lambda_j) = \beta_j$.
   Hence if $\phi\in\linspan\{ \phi_1,\,\dots,\, \phi_n\}\subseteq\mD(B^*)$ is normalized, then
   \begin{align*}
      \mfs(\lambda)[\phi]
      &= \int_\R (M_1-\lambda) \abs{\phi}^2\, \rd x
      +\int_\R \frac{\abs{ -\phi' + W \phi }^2}{ M_2 + \lambda}\, \rd x
      \\
      &\leq \left(\Msup_1 - \lambda\right)\int_\R\abs{\phi}^2\, \rd x
      +\frac{1}{ m_2 + \lambda} \int_\R\abs{ -\phi' + W \phi}^2\, \rd x
      \\
      &= (\Msup_1 - \lambda) + \frac{\scalar{B^*\phi}{ B^*\phi}}{ m_2 + \lambda}\\
      &= \frac{ -f (\lambda) + \scalar{B^* \phi}{B^* \phi}}{ m_2 + \lambda}\\
      &\leq \frac{ -f (\lambda) + \beta_n}{ m_2 + \lambda},
   \end{align*}
   and therefore $p(\phi)\leq \widetilde\lambda_n$. The proof is finished by
   \begin{equation*}
       \lambda_n = 
      \min_{ \substack{ L\subseteq \mD(B^*) \\ \dim L = n} }
      \max_{ \substack{ \phi\in L \\ \phi\neq 0} }\ p(\phi)
      \leq \max_{ \substack{ \phi\in \linspan\{ \phi_1,\,\dots,\, \phi_n\} \\ \phi\neq 0} } p(\phi) \leq \widetilde \lambda_n=f^{-1}(\beta_n).\qedhere
   \end{equation*}
\end{enumerate}
\end{proof}

\begin{corollary}
    From Theorem~\ref{thm:teoremafavoritoglobal} (i) it follows immediately that $[m_1,g^{-1}(\inf\sigma(BB^*))\subseteq\rho(T)$.
\end{corollary}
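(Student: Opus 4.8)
The plan is to read the assertion off directly from part~(i) of Theorem~\ref{thm:teoremafavoritoglobal}, specifically from the dimension inequality $\dim\mL_{[m_1,\lambda)}T\le\dim\mL_{[0,g(\lambda))}BB^*$. Write $\beta_0\coloneqq\inf\sigma(BB^*)$. Since $BB^*$ is non-negative and $[\beta_e,\infty)=\essspec(BB^*)\subseteq\sigma(BB^*)$, we have $0\le\beta_0\le\beta_e$. Recall that $g\colon[m_1,\infty)\to[0,\infty)$ is a strictly increasing bijection with $g(m_1)=0$, so that $g^{-1}$ is strictly increasing and $g^{-1}(0)=m_1$; together with $\beta_e\le g(\lambda_{e+})$ from part~(i) this gives $m_1=g^{-1}(0)\le g^{-1}(\beta_0)\le g^{-1}(\beta_e)\le\lambda_{e+}$. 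In particular the interval $[m_1,g^{-1}(\beta_0))$ is well defined, and it is empty when $\beta_0=0$, in which case there is nothing to prove.

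Next I would fix an arbitrary $\lambda\in[m_1,g^{-1}(\beta_0))$ and show $\lambda\in\rho(T)$. Applying the strictly increasing map $g$ gives $0\le g(\lambda)<\beta_0=\inf\sigma(BB^*)$, hence $[0,g(\lambda))$ is disjoint from $\sigma(BB^*)$ and $\mL_{[0,g(\lambda))}BB^*=\{0\}$. Since also $\lambda<g^{-1}(\beta_0)\le g^{-1}(\beta_e)$, the hypothesis of the dimension inequality in part~(i) is satisfied, so it yields $\dim\mL_{[m_1,\lambda)}T\le\dim\mL_{[0,g(\lambda))}BB^*=0$, i.e.\ $\mL_{[m_1,\lambda)}T=\{0\}$. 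By Proposition~\ref{prop:EssspecResT} we also have $(-m_2,m_1)\subseteq\rho(T)$, hence $\mL_{(-m_2,m_1)}T=\{0\}$; since $(-m_2,m_1)$ and $[m_1,\lambda)$ are disjoint with union the open interval $(-m_2,\lambda)$, we get $\mL_{(-m_2,\lambda)}T=\mL_{(-m_2,m_1)}T\oplus\mL_{[m_1,\lambda)}T=\{0\}$. For the selfadjoint operator $T$ the vanishing of the spectral subspace of an open interval is equivalent to that interval lying in $\rho(T)$, so $(-m_2,\lambda)\subseteq\rho(T)$ and in particular $[m_1,\lambda)\subseteq\rho(T)$.

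Finally, taking the union over all $\lambda\in[m_1,g^{-1}(\beta_0))$ gives $[m_1,g^{-1}(\beta_0))\subseteq\rho(T)$, which is the claim. There is no real obstacle here — this genuinely is a corollary — and the only step worth a word is the passage from $\mL_{[m_1,\lambda)}T=\{0\}$ (vanishing of the spectral projection of a \emph{half-open} interval) to membership of that interval in the resolvent set; this is dealt with cleanly by absorbing $[m_1,\lambda)$ into the open interval $(-m_2,\lambda)$ via the already established gap $(-m_2,m_1)\subseteq\rho(T)$.
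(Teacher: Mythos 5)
Your argument is correct and is exactly the intended reading of the corollary, which the paper itself leaves as ``immediate'': you apply the dimension inequality of Theorem~\ref{thm:teoremafavoritoglobal}~(i) with $g(\lambda)<\inf\sigma(BB^*)$ to get $\mL_{[m_1,\lambda)}T=\{0\}$, and your extra care in absorbing the half-open interval into the open gap $(-m_2,\lambda)$ via Proposition~\ref{prop:EssspecResT} is a clean way to handle the endpoint $m_1$. No discrepancy with the paper.
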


The next goal is to prove a localized version of the above theorem which allows us to compare the eigenvalues of $T$ with those of Sturm-Liouville operators on intervals.

Let $I$ be a bounded open interval and define the differential operator $B_I$ and its adjoint by
\begin{align*}
     B_I &= \frac{\rd}{\rd x} + W,\qquad \mD(B_I)  = H^1(I),\\
     B_I^* &= -\frac{\rd}{\rd x} + W,\qquad \mD(B_I^*)  = H_0^1(I),
\end{align*}
in the Hilbert space $L_2(I)$,
see \cite[Ch.V, Ex.~3.25]{kato}.

The second order differential operator $B_IB_I^*$ is selfadjoint and non-negative \cite[Ch.V, Thm.~3.24]{kato}. 
Setting $u(x)\coloneqq\exp\left(\int_0^x W(s)\rd s\right)$, the unitary map
\begin{align*}
    U:L_2(I)&\longrightarrow L_2(I,u^2\rd x)\\
    \phi&\longmapsto \phi/u
\end{align*}
transforms $B_IB^*_I$ into a standard Sturm-Liouville operator
$$UB_IB^*_IU^*=-\frac{1}{u^2}\frac{\rd}{\rd x}u^2\frac{\rd}{\rd x}$$
defined on the Hilbert space $L_2(I,u^2\rd x)$ with Dirichlet boundary conditions. 
By the spectral theory of Sturm-Liouville operators on a bounded interval, we conclude that $B_IB_I^*$ has purely discrete spectrum, all its eigenvalues are simple, see \cite[Satz 13.21]{weidmannII}, and by the min-max principle \cite[Thm. 12.1]{schmuedgen} they are
\begin{align*}
   \beta_{n,I} = 
   \min_{ \substack{ L\subseteq \mD(B^*_I) \\ \dim L = n}}
   \max_{ \substack{ x\in L \\ x\neq 0}} 
   \frac{\scalar{B_I^*x}{B_I^*x}}{\|x\|^2}.
\end{align*}

\begin{theorem} 
   \label{thm:teoremafavorito}
    Let $(\lambda_n)_{n=1}^\infty$ be the eigenvalues of $T$ in $[m_1, \lambda_{e+})$ and 
   $(\beta_{n,I})_{n=1}^\infty$ the eigenvalues of $B_IB_I^*$ for some open bounded open interval $I$.\smallskip

   If $\Msup_{1,I} \leq \lambda_{e+}$, then for $n=1,\ldots, \dim\mL_{[0,f_I(\lambda_{e+}))}B_IB_I^*$ we have
   \begin{equation}
      \label{eq:upperboundInterval} 
      \lambda_n\leq f_I^{-1}(\beta_{n,I})=  \frac{ \Msup_{1,I} - m_{2,I} }{2}  
      + \sqrt{  \left( \frac{\Msup_{1,I} + m_{2,I} }{2} \right)^2 +\beta_{n,I}}\ .
   \end{equation}
   In particular, $\dim \mL_{[m_1, \lambda)}T\geq\dim \mL_{[0,f_I(\lambda))}B_IB_I^*$ for any $\lambda\in[m_1, \lambda_{e+}]$, where we use the convention $f_I(\lambda)=0$ for $\lambda\leq\Msup_{1,I}$.
\end{theorem}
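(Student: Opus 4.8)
The strategy is to mimic the proof of Theorem~\ref{thm:teoremafavoritoglobal}(ii) but replace the global test functions by functions supported inside $I$, so that only the local quantities $\Msup_{1,I}$, $m_{2,I}$, and the Dirichlet operator $B_IB_I^*$ enter. First I would observe that $f_I$ is a strictly increasing bijection from $[\Msup_{1,I},\infty)$ onto $[0,\infty)$, and that $f_I^{-1}(\beta)=\eta(\Msup_{1,I},m_{2,I},\beta)$ with $\eta$ the function from the previous proof; the hypothesis $\Msup_{1,I}\le\lambda_{e+}$ together with the monotonicity of $\eta$ gives $f_I(\lambda_{e+})\ge 0$, so the range $n=1,\dots,\dim\mL_{[0,f_I(\lambda_{e+}))}B_IB_I^*$ is meaningful and all relevant $\beta_{n,I}$ lie in $[0,f_I(\lambda_{e+}))$. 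Hence there are unique $\Msup_{1,I}\le\widetilde\lambda_1<\cdots<\widetilde\lambda_n<\lambda_{e+}$ with $f_I(\widetilde\lambda_j)=\beta_{j,I}$.

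The key point is the embedding $H_0^1(I)\hookrightarrow H^1(\R)$ by extension by zero: if $\phi\in\mD(B_I^*)=H_0^1(I)$ and we denote by $\overline\phi$ its zero-extension, then $\overline\phi\in H^1(\R)=\mD(B^*)$, $\|\overline\phi\|_{L_2(\R)}=\|\phi\|_{L_2(I)}$, and $B^*\overline\phi$ is the zero-extension of $B_I^*\phi$, so $\scalar{B^*\overline\phi}{B^*\overline\phi}_{L_2(\R)}=\scalar{B_I^*\phi}{B_I^*\phi}_{L_2(I)}$. Now for $\lambda\ge\Msup_{1,I}$ (in particular $M_2+\lambda\ge m_{2,I}+\lambda>0$ on $I$) I estimate the Schur form on such a test function exactly as in Theorem~\ref{thm:teoremafavoritoglobal}(ii):
\begin{align*}
   \mfs(\lambda)[\overline\phi]
   &= \int_I (M_1-\lambda)\abs{\phi}^2\,\rd x
   + \int_I \frac{\abs{-\phi'+W\phi}^2}{M_2+\lambda}\,\rd x\\
   &\le (\Msup_{1,I}-\lambda)\,\|\phi\|^2
   + \frac{1}{m_{2,I}+\lambda}\,\scalar{B_I^*\phi}{B_I^*\phi}
   = \frac{-f_I(\lambda)+\scalar{B_I^*\phi}{B_I^*\phi}}{m_{2,I}+\lambda}.
\end{align*}
Taking $\phi$ in the $n$-dimensional span of the first $n$ eigenfunctions of $B_IB_I^*$, the min-max expression for $\beta_{n,I}$ gives $\scalar{B_I^*\phi}{B_I^*\phi}\le\beta_{n,I}\|\phi\|^2$, hence $\mfs(\lambda)[\overline\phi]\le 0$ whenever $f_I(\lambda)\ge\beta_{n,I}$, i.e.\ whenever $\lambda\ge\widetilde\lambda_n$. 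Therefore $p(\overline\phi)\le\widetilde\lambda_n$ for every such $\phi$, and since $L\coloneqq\{\overline\phi:\phi\in\linspan\{\phi_{1,I},\dots,\phi_{n,I}\}\}$ is an $n$-dimensional subspace of $\mD(B^*)$,
\begin{equation*}
   \lambda_n = \min_{\substack{L\subseteq\mD(B^*)\\ \dim L=n}}\ \max_{\substack{\phi\in L\\ \phi\neq 0}} p(\phi)
   \le \max_{\substack{\phi\in L\\ \phi\neq 0}} p(\phi)
   \le \widetilde\lambda_n = f_I^{-1}(\beta_{n,I}),
\end{equation*}
which is \eqref{eq:upperboundInterval}. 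The counting statement $\dim\mL_{[m_1,\lambda)}T\ge\dim\mL_{[0,f_I(\lambda))}B_IB_I^*$ then follows: if $\dim\mL_{[0,f_I(\lambda))}B_IB_I^*=k$ then $\beta_{k,I}<f_I(\lambda)$, so $\lambda_k\le f_I^{-1}(\beta_{k,I})<\lambda$, giving at least $k$ eigenvalues of $T$ in $[m_1,\lambda)$; for $\lambda\le\Msup_{1,I}$ the right-hand side is $0$ by convention and there is nothing to prove.

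The only mild subtlety — and the step I would be most careful about — is the identification $B^*\overline\phi = \overline{B_I^*\phi}$ together with the claim $\overline\phi\in H^1(\R)$: this uses precisely that the adjoint $B_I^*$ carries the Dirichlet boundary condition (domain $H_0^1(I)$), so that the zero-extension has no jump and stays in $H^1$, whereas extending a general element of $\mD(B_I)=H^1(I)$ by zero would fail. Everything else is a repetition of the computation already carried out globally, with $\R$ replaced by $I$ and the sup/inf of $M_1,M_2$ replaced by their restrictions to $I$; no new analytic input is needed, and in particular one does not need $\lambda<\lambda_{e+}$ for the form manipulations, only for the conclusion that $\widetilde\lambda_n$ lies below the essential spectrum so that $\lambda_n$ is a genuine discrete eigenvalue.
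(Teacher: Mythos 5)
Your proposal is correct and takes essentially the same route as the paper: the paper's own proof consists precisely of the remark that the argument of Theorem~\ref{thm:teoremafavoritoglobal}~(ii) carries over once functions in $\mD(B_I^*)=H_0^1(I)$ are identified with functions in $\mD(B^*)=H^1(\R)$ by extension by zero. You have simply written out the details that the paper leaves implicit, correctly isolating the one genuine subtlety (that the Dirichlet condition in $\mD(B_I^*)$ is what makes the zero-extension land in $H^1(\R)$ with $B^*\overline\phi=\overline{B_I^*\phi}$).
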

\begin{proof} 
   The proof is the same as the one of Theorem~\ref{thm:teoremafavoritoglobal} (ii). We just need to identify functions in $\mD(B_IB_I^*)\subseteq\mD(B_I^*)=H_0^1(I)$ with functions in $\mD(B^*)=H^1(\R)$ by extending them by $0$ outside of $I$. 
\end{proof}

Since in the above Theorem only the local behaviour of the $M_j$ and $W$ is taken into account, it can give only a lower bound for the total number of eigenvalues in the gap and upper bounds for their values.
\smallskip

The theorem can be extended in a natural way to the situation of several disjoint intervals as in the following corollary.

\begin{corollary}
\label{cor:DisjointIntervals}
     Let $(\lambda_n)_{n=1}^\infty$ be the eigenvalues of $T$ in $[m_1, \lambda_{e+})$. Let $\mathcal{I}$ be a finite family of pairwise disjoint open bounded intervals for which $\max_{I\in\mathcal{I}}\Msup_{1,I} \leq \lambda_{e+}$ and let $(\widetilde{\lambda}_n)_{n=1}^\infty$ be the increasing sequence of eigenvalues, counted with multiplicity, of the operator
$$\bigoplus_{I\in\mathcal{I}}f^{-1}_I\left(B_IB^*_I\right).$$
    Then $\lambda_n\leq\widetilde{\lambda}_n$ for $n=1,\, \ldots,\,  \sum_{I\in\mathcal{I}} \dim \mL_{[\Msup_{1,I},\lambda_{e+})}f^{-1}_I(B_I B_I^*)$ and
    $$\dim \mL_{[m_1, \lambda)}T\geq\sum_{I\in\mathcal{I}} \dim \mL_{[\Msup_{1,I},\lambda)}f^{-1}_I(B_I B_I^*)=\sum_{I\in\mathcal{I}} \dim \mL_{[m_1,\lambda)}f^{-1}_I(B_I B_I^*),\qquad \lambda\in\left[m_1, \lambda_{e+}\right].$$
\end{corollary}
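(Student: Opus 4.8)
The plan is to reduce the statement to Theorem~\ref{thm:teoremafavorito} by treating the direct sum $\bigoplus_{I\in\mathcal I} B_I B_I^*$ as a single Sturm--Liouville-type operator acting on the orthogonal decomposition $\bigoplus_{I\in\mathcal I} L_2(I)$, which embeds isometrically into $L_2(\R)$ via extension by zero, since the intervals in $\mathcal I$ are pairwise disjoint. First I would observe that an element of $\mD\big(\bigoplus_{I} B_I^*\big) = \bigoplus_I H_0^1(I)$, extended by zero outside $\bigcup_{I\in\mathcal I} I$, lies in $H^1(\R) = \mD(B^*)$; this is the same identification used in the proof of Theorem~\ref{thm:teoremafavorito}, just applied simultaneously on every interval. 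On such a test function $\phi = \bigoplus_I \phi_I$ the Schur form splits as $\mfs(\lambda)[\phi] = \sum_{I\in\mathcal I}\mfs(\lambda)[\phi_I]$ (the cross terms vanish because the supports are disjoint), and the pointwise estimate from the proof of Theorem~\ref{thm:teoremafavoritoglobal}(ii) gives, for each $I$,
\begin{equation*}
   \mfs(\lambda)[\phi_I] \le \frac{-f_I(\lambda)\,\|\phi_I\|^2 + \scalar{B_I^*\phi_I}{B_I^*\phi_I}}{m_{2,I}+\lambda},
   \qquad \Msup_{1,I}\le\lambda<\lambda_{e+}.
\end{equation*}

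Next, the key point is to convert this into a statement about the Rayleigh quotient $p(\phi)$ and then feed it into the min-max characterisation $\lambda_n = \min_{\dim L = n}\max_{0\ne\phi\in L} p(\phi)$ from Theorem~\ref{thm:varprinciple:abstract}. I would choose the trial subspace $L$ to be spanned by the first $n$ eigenfunctions (extended by zero) of the direct-sum operator $\bigoplus_{I} f_I^{-1}(B_I B_I^*)$ whose eigenvalues are the $\widetilde\lambda_1\le\cdots\le\widetilde\lambda_n$; equivalently one assembles $L$ from appropriately many eigenfunctions of each individual $B_I B_I^*$. For a normalized $\phi\in L$, writing $\phi = \bigoplus_I\phi_I$, one has $\scalar{B_I^*\phi_I}{B_I^*\phi_I}\le f_I(\widetilde\lambda_n)\|\phi_I\|^2$ for every $I$ appearing (because $f_I$ is increasing and each eigenfunction component has $B_I B_I^*$-eigenvalue at most $\beta$ with $f_I^{-1}(\beta)\le\widetilde\lambda_n$), whence $\mfs(\widetilde\lambda_n)[\phi_I]\le 0$ term by term and summing gives $\mfs(\widetilde\lambda_n)[\phi]\le 0$, i.e. $p(\phi)\le\widetilde\lambda_n$. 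Taking the max over $\phi\in L$ and then the min over admissible $L$ yields $\lambda_n\le\widetilde\lambda_n$, valid as long as $\widetilde\lambda_n<\lambda_{e+}$, which is exactly the range $n=1,\dots,\sum_{I}\dim\mL_{[\Msup_{1,I},\lambda_{e+})}f_I^{-1}(B_I B_I^*)$. The counting-function consequence $\dim\mL_{[m_1,\lambda)}T\ge\sum_I\dim\mL_{[\Msup_{1,I},\lambda)}f_I^{-1}(B_I B_I^*)$ then follows by the usual equivalence between eigenvalue inequalities and inequalities for the number of eigenvalues below a level; the final equality $\mL_{[\Msup_{1,I},\lambda)}=\mL_{[m_1,\lambda)}$ for $f_I^{-1}(B_I B_I^*)$ holds because that operator has no spectrum below $\Msup_{1,I}\ge m_1$ (its spectrum is $f_I^{-1}$ of the non-negative spectrum of $B_I B_I^*$).

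The main obstacle is bookkeeping rather than anything deep: one must be careful that when the trial subspace $L$ is assembled from eigenfunctions coming from different intervals, an arbitrary normalized $\phi\in L$ is a genuine superposition $\bigoplus_I\phi_I$ with $\sum_I\|\phi_I\|^2 = 1$, and the bound $\scalar{B_I^*\phi_I}{B_I^*\phi_I}\le f_I(\widetilde\lambda_n)\|\phi_I\|^2$ must hold for each $I$ separately — this is where disjointness of the intervals and the definition of $\widetilde\lambda_n$ as the $n$-th eigenvalue of the direct sum are both used. Once the inequality is established per interval, summing is immediate because the Schur form and the norm both decompose orthogonally across $\mathcal I$. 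I would also remark that the restriction $\max_{I\in\mathcal I}\Msup_{1,I}\le\lambda_{e+}$ (equivalently $\Msup_{1,I}\le\lambda_{e+}$ for each $I$) is exactly what is needed for each $f_I^{-1}$ to be defined on the relevant range and for Theorem~\ref{thm:teoremafavorito} to apply on each interval, so no new hypothesis beyond those of that theorem is required.
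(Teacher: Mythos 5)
Your proposal is correct and is precisely the ``natural extension'' of Theorem~\ref{thm:teoremafavorito} that the paper has in mind (the paper gives no separate proof of this corollary): extend by zero, use disjointness of supports so that the Schur form and the norm decompose orthogonally, apply the per-interval estimate from the proof of Theorem~\ref{thm:teoremafavoritoglobal}(ii) to get $\mfs(\widetilde\lambda_n)[\phi]\le 0$ on the span of the first $n$ eigenfunctions of the direct sum, and conclude via the min-max characterisation. Your bookkeeping of the admissible range of $n$ and of the final counting statement matches the intended argument.
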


Assuming some regularity of $W$, we can estimate the number of eigenvalues of $B_I B_I^*$ as the next remark shows.

\begin{remark}[Number of eigenvalues of $B_I B_I^*$]
   \label{rem:numberEVofBB}
    If $W\in C^1(\overline{I})$, then
    $B_I B_I^* = -\frac{\rd^2}{\rd x^2} + W' + W^2$ with $\mD(B_I B_I^*)=H^2(I)\cap H^1_0(I)$.
    Let $q(x) \coloneqq  W'(x) + W^2(x)$. 
    It follows from the min-max principle that
   \begin{equation*}
      \inf_{x\in I}\, q + \left( \frac{n\pi}{\abs{I}}  \right)^2
      \le \beta_{n,I} \le 
      \sup_{x\in I}\, q + \left( \frac{n\pi}{\abs{I}}  \right)^2,
   \end{equation*}
   where $\abs{I}$ is the length of $I$. 
   Therefore, provided that $f_I(\lambda)>\sup_{x\in I}q$,  we have
   $$\dim \mL_{[0,f_I(\lambda))}B_I B_I^*\geq\max\left\{n\in\N_0 \,:  \, \sup_{x\in I}\ q + \left( \frac{n\pi}{\abs{I}}  \right)^2 < f_I(\lambda)\right\}
   =\ceil{ \frac{\abs{I}}{\pi}\sqrt{f_I(\lambda)-\sup_{x\in I}\, q} }-1.$$
   Hence the longer the interval $I$, the more eigenvalues of $B_I B_I^*$ can ``fit into it'' before $\beta_{n,I} \ge f_I(\lambda_{e+})$.
   Note also that the inequalities above in are in fact equalities if $W$ is constant in $I$.
\end{remark}

Together with Corollary~\ref{cor:DisjointIntervals} this leads to the following lower bound for the number of eigenvalues of $T$.

\begin{corollary}
\label{cor:numberEV}
Under the conditions of Corollary~\ref{cor:DisjointIntervals} and Remark~\ref{rem:numberEVofBB}, the number of eigenvalues of $T$ in $[m_1, \lambda)$ for $\lambda\in [m_1, \lambda_{e+}]$ is
   $$\dim \mL_{[m_1, \lambda)}T
   \geq
   \sum_{I\in\mathcal{I}} \dim \mL_{[0, f_I(\lambda) )} 
   B_I B_I^*
   \ge \sum_{ \substack{ I\in\mathcal{I} \\ f_I(\lambda) > \sup_I q } } 
   \left( \ceil{ \frac{\abs{I}}{\pi}\sqrt{f_I(\lambda)-\sup_{x\in I}\, q}} -1 \right).
   $$
\end{corollary}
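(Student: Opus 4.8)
The plan is to obtain the statement by chaining the two results quoted in the hypotheses, Corollary~\ref{cor:DisjointIntervals} and Remark~\ref{rem:numberEVofBB}, and then discarding the intervals that do not contribute.

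First I would apply Corollary~\ref{cor:DisjointIntervals}. Since $\mathcal I$ is a finite family of pairwise disjoint bounded open intervals with $\max_{I\in\mathcal I}\Msup_{1,I}\le\lambda_{e+}$ and $\lambda\in[m_1,\lambda_{e+}]$, it gives directly
\[
   \dim\mL_{[m_1,\lambda)}T\ \ge\ \sum_{I\in\mathcal I}\dim\mL_{[\Msup_{1,I},\lambda)}f_I^{-1}(B_IB_I^*).
\]
Because $f_I$ is a strictly increasing bijection of $[\Msup_{1,I},\infty)$ onto $[0,\infty)$ (with the convention $f_I(\lambda)=0$ for $\lambda\le\Msup_{1,I}$), the spectral mapping theorem applied to $f_I^{-1}$ turns each summand into $\dim\mL_{[0,f_I(\lambda))}B_IB_I^*$, which is the first inequality of the statement.

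Next, for every interval $I\in\mathcal I$ with $f_I(\lambda)>\sup_{x\in I}q$, where $q=W'+W^2$, I would invoke the lower bound of Remark~\ref{rem:numberEVofBB},
\[
   \dim\mL_{[0,f_I(\lambda))}B_IB_I^*\ \ge\ \ceil{\frac{\abs I}{\pi}\sqrt{f_I(\lambda)-\sup_{x\in I}q}}-1,
\]
which relies on $W\in C^1(\overline I)$, so that $B_IB_I^*=-\frac{\rd^2}{\rd x^2}+q$ on $H^2(I)\cap H_0^1(I)$, together with the min-max upper estimate $\beta_{n,I}\le\sup_I q+(n\pi/\abs I)^2$. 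For all remaining intervals, those with $f_I(\lambda)\le\sup_I q$, I would keep only the trivial bound $\dim\mL_{[0,f_I(\lambda))}B_IB_I^*\ge 0$ and drop them. Restricting the sum to the subfamily $\{I\in\mathcal I:\ f_I(\lambda)>\sup_I q\}$ and substituting the estimate of the Remark into each retained term yields the second inequality, which finishes the argument.

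Since nothing genuinely new is proved here, the only point requiring a moment's care is the consistency of the two sums when some $I$ has $f_I(\lambda)=0$ (that is, $\lambda\le\Msup_{1,I}$) yet $\sup_I q<0$: then that $I$ contributes $0$ to the middle sum but apparently $\ceil{\frac{\abs I}{\pi}\sqrt{-\sup_I q}}-1$ to the right-hand sum. I would settle this by noting that $B_IB_I^*\ge 0$ forces $0\le\beta_{1,I}\le\sup_I q+(\pi/\abs I)^2$, hence $-\sup_I q\le(\pi/\abs I)^2$, so $\frac{\abs I}{\pi}\sqrt{-\sup_I q}\le 1$ and the offending term is in fact $\le 0$. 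Thus retaining or discarding such intervals is harmless and the displayed chain of inequalities holds as stated. This small bookkeeping is the only obstacle, and it is minor.
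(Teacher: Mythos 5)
Your argument is correct and is exactly the route the paper intends: the corollary is stated without a separate proof, being the immediate combination of Corollary~\ref{cor:DisjointIntervals} (together with the identification $\dim\mL_{[\Msup_{1,I},\lambda)}f_I^{-1}(B_IB_I^*)=\dim\mL_{[0,f_I(\lambda))}B_IB_I^*$) with the eigenvalue count of Remark~\ref{rem:numberEVofBB}, and dropping the intervals with $f_I(\lambda)\le\sup_I q$ via non-negativity of the spectral dimension. Your closing check that an interval with $f_I(\lambda)=0>\sup_I q$ contributes a non-positive term to the right-hand sum, because $\beta_{1,I}\ge 0$ forces $\ceil{\frac{\abs{I}}{\pi}\sqrt{-\sup_I q}}-1\le 0$, is a correct piece of bookkeeping that the paper leaves implicit.
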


For the next theorem, and subsequent examples, we introduce two families of functions $(M_1( {}\cdot{},\, t))_{t\in J}$ and $(M_2( {}\cdot{},\, t))_{t\in J}$ indexed by some interval $J$. We assume that $M_1({}\cdot{},\, t),\ M_2({}\cdot{},\, t),\ W({}\cdot{})$ satisfy \ref{item:A1}, \ref{item:A2}, \ref{item:A3} for all $t\in J$ and define the operator $T(t)$, and every other related quantity, with $M_1({}\cdot{},  t), M_2({}\cdot{}, t)$ instead of $M_1, M_2$.
\begin{theorem} 
   \label{thm:MonotonOfEigenvalues}
   Suppose that $M_1(x,t)$ is decreasing and $M_2(x,t)$ is increasing as a function of $t\in J$. Then, the $n$th eigenvalue of $T(t)$ in $[m_1(t),\lambda_{e+}(t))$ satisfies
\begin{equation*}
      \lambda_n(t_1)\ge \lambda_n(t_2), \qquad t_1\leq t_2,
\end{equation*}
provided both exist.
\end{theorem}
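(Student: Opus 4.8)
The plan is to read off the monotonicity from the variational principle in Theorem~\ref{thm:varprinciple:abstract} and the explicit form of the Schur quadratic form. Write $\mfs_t(\lambda)$, $p_t$, $m_i(t)$, $\lambda_{e+}(t)$, $\mu_n(t)$ for the quantities attached to $T(t)$, and note first that the off-diagonal operator $B=\frac{\rd}{\rd x}+W$ is \emph{independent} of $t$, so that $\mD(B^*)=H^1(\R)$; hence in \eqref{eq:def:mun} the minimax runs over the same family of subspaces for every $t\in J$, and only the functional $p_t$ changes. The key point is the pointwise monotonicity of the Schur form: for $\phi\in H^1(\R)$ and any $\lambda>-m_2(t_1)$ — note $m_2(t_1)\le m_2(t_2)$, so $\mfs_{t_1}(\lambda)$ and $\mfs_{t_2}(\lambda)$ are both defined at such $\lambda$ — one has
\[
   \mfs_{t_1}(\lambda)[\phi]-\mfs_{t_2}(\lambda)[\phi]
   =\int_\R \big(M_1(x,t_1)-M_1(x,t_2)\big)\abs{\phi}^2
   +\Big(\tfrac{1}{M_2(x,t_1)+\lambda}-\tfrac{1}{M_2(x,t_2)+\lambda}\Big)\abs{-\phi'+W\phi}^2\,\rd x\ \ge\ 0,
\]
because $M_1(\cdot,t_1)\ge M_1(\cdot,t_2)$ and $0<M_2(\cdot,t_1)+\lambda\le M_2(\cdot,t_2)+\lambda$.

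Next I would verify that $p_{t_2}(\phi)$ lies in the domain of the pencil $\mfs_{t_1}(\cdot)$, i.e.\ that $p_{t_2}(\phi)>-m_2(t_1)$. Indeed, $p_{t_2}(\phi)\ge m_1(t_2)$ by \eqref{eq:schurAlowerbound} (applied to $T(t_2)$), while $-m_2(t_1)\le -m_2(t_2)<m_1(t_2)$, the last inequality being \ref{item:A2} for $T(t_2)$ and the first one following from $m_2(t_1)\le m_2(t_2)$. Evaluating the displayed inequality at $\lambda=p_{t_2}(\phi)$ therefore gives $\mfs_{t_1}(p_{t_2}(\phi))[\phi]\ge \mfs_{t_2}(p_{t_2}(\phi))[\phi]=0$. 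Since $\lambda\mapsto \mfs_{t_1}(\lambda)[\phi]$ is strictly decreasing on $(-m_2(t_1),\infty)$ with unique zero $p_{t_1}(\phi)$ (Proposition~\ref{prop:prepforvariation}~\ref{item:HypothesisVariational:iii}), the bound $\mfs_{t_1}(p_{t_2}(\phi))[\phi]\ge 0$ forces $p_{t_1}(\phi)\ge p_{t_2}(\phi)$ for every $\phi\in H^1(\R)\setminus\{0\}$.

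Finally, taking $\min$ over $n$-dimensional subspaces $L\subseteq H^1(\R)$ and $\max$ over $\phi\in L\setminus\{0\}$ in $p_{t_1}\ge p_{t_2}$ yields $\mu_n(t_1)\ge\mu_n(t_2)$; by Theorem~\ref{thm:varprinciple:abstract}, $\lambda_n(t_i)=\mu_n(t_i)$ whenever the $n$th eigenvalue of $T(t_i)$ in $[m_1(t_i),\lambda_{e+}(t_i))$ exists, which gives $\lambda_n(t_1)\ge\lambda_n(t_2)$ in that case. I expect the only delicate point to be the bookkeeping of the $\lambda$-domains of the two Schur pencils — namely checking that $p_{t_2}(\phi)$ stays above $-m_2(t_1)$ so that the pointwise comparison may legitimately be evaluated there — and this was dispatched above using \ref{item:A2}; everything else is immediate from the variational principle already established.
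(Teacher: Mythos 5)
Your route is the same as the paper's: the pointwise monotonicity $\mfs_{t_1}(\lambda)[\phi]\ge\mfs_{t_2}(\lambda)[\phi]$ of the Schur form in $t$, fed into the variational characterisation \eqref{eq:def:mun}; the paper's proof is in fact just this one displayed inequality plus the words ``the claim follows immediately''. Your extra bookkeeping (monotonicity in $\lambda$, comparison of the zeros $p_{t_1}(\phi)\ge p_{t_2}(\phi)$, then min-max) is exactly the right way to make that ``immediately'' precise.

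However, the one step you yourself flag as delicate contains a sign error. From $M_2(\cdot,t)$ increasing you correctly get $m_2(t_1)\le m_2(t_2)$, but this yields $-m_2(t_1)\ge -m_2(t_2)$, not $-m_2(t_1)\le -m_2(t_2)$ as you wrote; the chain $-m_2(t_1)\le -m_2(t_2)<m_1(t_2)$ is therefore backwards, and the claim $p_{t_2}(\phi)>-m_2(t_1)$ can genuinely fail. Indeed, since $m_1$ decreases and $-m_2$ also decreases in $t$, the gaps $(-m_2(t_1),m_1(t_1))$ and $(-m_2(t_2),m_1(t_2))$ can be disjoint (e.g.\ $M_1=10-t$, $M_2=t-5$, $t_1=0$, $t_2=7$ gives $m_1(t_2)=3<5=-m_2(t_1)$), in which case $p_{t_2}(\phi)$ may lie outside the $\lambda$-domain of $\mfs_{t_1}(\cdot)$ and your evaluation at $\lambda=p_{t_2}(\phi)$ is not defined. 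The repair is a one-line case distinction: if $p_{t_2}(\phi)\le -m_2(t_1)$, then $p_{t_1}(\phi)\ge m_1(t_1)>-m_2(t_1)\ge p_{t_2}(\phi)$ directly by \eqref{eq:schurAlowerbound} and \ref{item:A2} at $t_1$; otherwise $p_{t_2}(\phi)>-m_2(t_1)$ and your comparison argument applies verbatim. With that correction the proof is complete and coincides in substance with the paper's.
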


\begin{proof}
   The claim follows immediately from the variational characterisation \eqref{eq:def:mun} of the eigenvalues if we note that for all $\phi\in H_1(\R)$
   \begin{align*}
      \mfs(\lambda,t_1)[\phi]
      & = \int_\R (M_1({}\cdot{},\, t_1) - \lambda) \abs{\phi}^2 \rd x + 
      \int_\R \frac{ \abs{-\phi' + W\phi}^2 }{ M_2({}\cdot{},\, t_1)+\lambda } \rd x\\
      &\ge \int_\R (M_1({}\cdot{},\, t_2) - \lambda) \abs{\phi}^2 \rd x + 
      \int_\R \frac{ \abs{-\phi' + W\phi}^2 }{ M_2({}\cdot{},\, t_2)+\lambda } \rd x
      = \mfs(\lambda,t_2)[\phi].
      \qedhere
   \end{align*}
\end{proof}

\begin{example}
As a special case we consider 
$W(x)\to 0$ for $|x|\to\infty$, $M_2(x,t)=M_2(x)$ and $M_1(x,t) = \Msup_1 - t g(x)$ where $\Msup_1$ is constant and $g\ge 0$ is a bounded measurable function with $\lim_{|x|\to\infty} g(x) = 0$.
Note that in this case $\lambda_{e+}(t) = \Msup_1$, $m_1(t)=\Msup_1-t\norm{g}_\infty$
and therefore $T(0)$ does not have eigenvalues in $[m_1(0), \lambda_{e+}(0))=\emptyset$.
As $t$ increases, the interval $[m_1(t),\, \lambda_{e+}(t)) = \left[\Msup_1-t\norm{g}_\infty,\, \Msup_1\right)$ becomes larger.
Moreover, we assume that $g(x)\ge 1$ for $x$ in some interval $I$, then we obtain for all $\phi\in H^1_0(I)$ that
\begin{align*}
   \mfs(\lambda,t)[\phi]
   &\ge ( \Msup_1 - t\norm{g}_\infty - \lambda) \|\phi\|^2 + \frac{\scalar{B_I^*\phi}{B_I^*\phi}}{\Msup_{2,I} + \lambda }
\end{align*}
and
\begin{align*}
   \mfs(\lambda,t)[\phi]
   & \le (\Msup_1 -t -\lambda) \|\phi\|^2  
   + \frac{\scalar{B_I^*\phi}{B_I^*\phi}}{m_{2,I} + \lambda }.
\end{align*}
In particular for $\lambda = \lambda_{e+}(t) = \Msup_1$ we obtain
\begin{align*}
   \mfs(\Msup_1,t)[\phi]
   & \le - t \|\phi\|^2  
   + \frac{\scalar{B_I^*\phi}{B_I^*\phi}}{m_{2,I} + \Msup_1 } .
\end{align*}
Therefore, by Corollary~\ref{cor:abstractEVexistencis}, as soon as $t > \frac{\beta_{1,I}}{ m_{2,I} + \Msup_1}$,  the operator $T(t)$ has at least one eigenvalue in $[m_1(t),\Msup_1)$.
\end{example}

\begin{example}[Analytic example]
\label{ex:Analytic}
As a toy example we consider the Dirac operator with
\begin{equation*}
    W(x) = 0,\quad
    M_1(x,t) = M - t \chi_{[-1,1]}(x), \quad
    M_2(x,t) = M + \gamma t \chi_{[-1,1]}(x),
\end{equation*}
for some $M>0$ and $\gamma \in \{ 0, 1, -1\}$.
See Figure~\ref{fig:potentialwell}.
If $t=0$, we obtain the free Dirac operator whose spectrum is known to be $\sigma(T(0)) = \essspec(T(0))= (-\infty, -M]\cup [M,\infty)$ and $\discspec(T(0)) = \emptyset$.
So from now on we assume $t> 0$.

\begin{figure}[!hb]
\begin{subfigure}{.33\linewidth}
\centering
\begin{tikzpicture}[scale=.75, transform shape]

   \tikzmath{
   \MM = 1;   
   \leftend = -2.5;  
   \rightend = 2.5; 
   }

      \draw[->] (\leftend,0)--(\rightend,0) node [right] {$x$};
      \draw[->] (0,-1.7)--(0,1.5);
   
      \draw[red] (\leftend,\MM)  -- (-1,\MM) -- (-1,{\MM-0.4}) -- (1,{\MM-0.4}) -- (1,\MM) -- (\rightend,\MM) node[right] {$M_1$};
      
      \draw[blue] (\leftend,-\MM) -- (\rightend,-\MM) node[right] {$-M_2$};
\end{tikzpicture}
\caption{$\gamma=0$.}
\end{subfigure}%
\begin{subfigure}{.33\linewidth}
\centering
\begin{tikzpicture}[scale=.75, transform shape]

   \tikzmath{
   \MM = 1;   
   \leftend = -2.5;  
   \rightend = 2.5; 
   }

       \draw[->] (\leftend,0)--(\rightend,0) node [right] {$x$};
      \draw[->] (0,-1.7)--(0,1.5);
   
      \draw[red] (\leftend,\MM)  -- (-1,\MM) -- (-1,{\MM-0.4}) -- (1,{\MM-0.4}) -- (1,\MM) -- (\rightend,\MM) node[right] {$M_1$};
      
      \draw[blue] (\leftend,-\MM)  -- (-1,-\MM) -- (-1,{-\MM-0.4}) -- (1,{-\MM-0.4}) -- (1,-\MM) -- (\rightend,-\MM) node[right] {$-M_2$};
\end{tikzpicture}
\caption{$\gamma=1$.}
\end{subfigure}%
\begin{subfigure}{.33\linewidth}
\centering
\begin{tikzpicture}[scale=.75, transform shape]

   \tikzmath{
   \MM = 1;   
   \leftend = -2.5;  
   \rightend = 2.5; 
   }

   \draw[->] (\leftend,0)--(\rightend,0) node [right] {$x$};
   \draw[->] (0,-1.7)--(0,1.5);

   \draw[red] 
   (\leftend,\MM)  -- (-1,\MM) -- (-1,{\MM-0.4}) -- (1,{\MM-0.4}) -- (1,\MM) -- (\rightend,\MM) node[right] {$M_1$};
   
   \draw[blue] 
   (\leftend,-\MM)  -- (-1,-\MM) -- (-1,{-\MM+0.4}) -- (1,{-\MM+0.4}) -- (1,-\MM) -- (\rightend,-\MM) node[right] {$-M_2$};
\end{tikzpicture}
\caption{$\gamma=-1$.}
\end{subfigure}%
\caption{Graphs of $M_1$ and $-M_2$ from Example~\ref{ex:Analytic} for 
$\gamma\in\{0,1,-1\}$.}
\label{fig:potentialwell}
\end{figure}
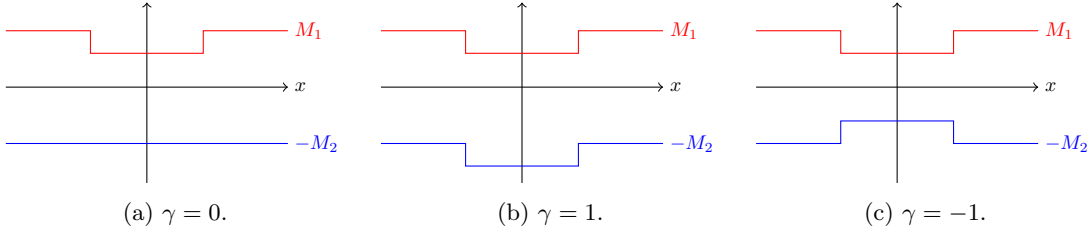
\smallskip

In order to satisfy \ref{item:A2}, we 
have to restrict the parameter $t$ to $t\in (0, 2M)$ if $\gamma \in \{0,1\}$ and $t\in (0, M)$ if $\gamma = -1$.
Then it follows from Proposition~\ref{prop:EssspecResT} that 
\begin{equation*}
   \essspec(T(t)) = (-\infty, -M]\cup [M, \infty),
   \qquad
   \rho(T(t)) \supseteq 
   \begin{cases}
   (-M,\, M-t), \qquad &\text{if }\ \gamma \in \{0, 1\},\\
      (-M+t,\, M-t), \qquad &\text{if }\ \gamma = -1.
   \end{cases}
\end{equation*}
Consequently, if there are eigenvalues in the essential spectral gap, they must lie in
\begin{equation*}
    \Sigma_{\gamma, t} =
   \begin{cases}
   [M-t,\, M), \qquad &\text{if }\ \gamma \in \{0, 1\},\\
      (-M,\, -M+t] \cup [M-t,\, M), \qquad &\text{if }\ \gamma = -1.
   \end{cases}
\end{equation*}

In order to solve the eigenvalue equation analytically, we 
let $\lambda\in\Sigma_{\gamma, t}$ and
rewrite $(T(t) - \lambda)\Psi = 0$ as the system
\begin{equation*}
    (M_1 - \lambda) \psi_1 + \psi_2' = 0,
    \qquad
    -\psi_1' + (-M_2 - \lambda) \psi_2 = 0.
\end{equation*}
Differentiating the second equation and inserting into the first, we obtain the equation
\begin{equation*}
    \psi_1''  =  (M_1 - \lambda)(M_2 + \lambda) \psi_1.
\end{equation*}

Note that $\nuext(\lambda) \coloneqq \sqrt{M^2-\lambda^2}>0$ and $\nuint(\lambda,t,\gamma) \coloneqq \sqrt{(\lambda-M+t)(\lambda+M+\gamma t)}\geq0$ since $\lambda\in \Sigma_{\gamma,t}$. With these parameters 
the differential equation above becomes
\begin{equation*}
    \psi_1''  =  
    \begin{cases}
       \nuext^2(\lambda) \psi_1,\quad &\text{if}\ |x| > 1,\\
       - \nuint^2(\lambda, t, \gamma) \psi_1,\quad &\text{if}\ |x| < 1.
    \end{cases}
\end{equation*}

Moreover, since $W(\cdot),M_j(\cdot,t)$ are even functions, we know that the components of $\Psi$ have definite parity. This together with $\psi_1\in L_2(\R)$ 
leads us to 
\begin{equation*}
    \psi_1(x) = 
    \begin{cases}
        A e^{-\nuext(\lambda) |x|},\quad & \text{if}\ |x| > 1,\\
        B \cos(\nuint(\lambda, t, \gamma) x),\quad & \text{if}\ |x| < 1,
    \end{cases}
    \quad\text{or}\quad
    \psi_1(x) = 
    \begin{cases}
        \sgn(x) A e^{-\nuext(\lambda) |x|},\quad & \text{if}\ |x| > 1,\\
        B \sin(\nuint(\lambda, t, \gamma) x),\quad & \text{if}\ |x| < 1.
    \end{cases}
\end{equation*}
The continuity of $\psi_1$ and $\psi_1'$ at $x=\pm1$ implies that any eigenvalue $\lambda$ must satisfy $\nuint(\lambda, t, \gamma)\neq0$ and $\tan \left(\nuint(\lambda, t, \gamma)\right) = p\left(\frac{\nuext(\lambda)}{\nuint(\lambda, t, \gamma)}\right)^{p}$ for  $p=\pm 1$, hence
\begin{equation}
\label{eq:specToyEx}
\discspec(T(t))
= \left\{\lambda\in \Sigma_{\gamma, t}\setminus\{\pm(M-t)\}:
\tan \left(\nuint(\lambda,t,\gamma)\right)
= p \left( \frac{\nuext(\lambda)}{ \nuint(\lambda,t,\gamma)} \right)^p,
\quad p = \pm 1
\right\}
\end{equation}
where $p = 1$ corresponds to even solutions and $p = -1$ to odd solutions.
\smallskip

From the equation above we see that a new eigenvalue enters the spectral gap from $\lambda= M$ if $\tan\left(\nuint(\lambda,t,\gamma)\right)$ is $0$ or $\pm\infty$, that is, when
\begin{equation}
\label{eq:ToyExEValueCondition}
   t(2M+\gamma t) \in \left\{ \left( \frac{n \pi}{2} \right)^2 : n\in \N_0 \right\}.
\end{equation}

Now let us consider the different cases for $\gamma$.

\begin{enumerate}[label={\upshape (\roman*)}]

    \item $\gamma = 0$.
    All eigenvalues of $T(t)$ in its essential spectral gap must lie in 
    $\Sigma_{0,t}\setminus\{M-t\} = (M-t,\, M)$.
    By Theorem~\ref{thm:MonotonOfEigenvalues} all eigenvalues are decreasing in $t$, so no eigenvalues can enter the interval from its lower bound $M-t$.
    We see from \eqref{eq:ToyExEValueCondition} that a new eigenvalue emerges from $\lambda = M$ whenever 
    \begin{equation}
      \label{eq:ToyExExactEV0}
       t= t_n = \frac{n^2\pi^2}{8M}.
    \end{equation}
    Therefore, $T(t)$ has exactly $n$ eigenvalues if $t\in (t_{n-1}, t_n)$ for $n\in \N$ as long as $t_n \le 2M$.
    Since $t< 2M$, this shows that the maximal number of eigenvalues in $(M-t, M)$ is $\lfloor \frac{4M}{\pi} \rfloor + 1$.
    
    \item $\gamma = 1$.
    All eigenvalues of $T(t)$ in its essential spectral gap must lie in  $\Sigma_{1,t}\setminus\{M-t\} = (M-t,\, M)$.
    Again, by Theorem~\ref{thm:MonotonOfEigenvalues} all eigenvalues are decreasing in $t$, so no eigenvalues can enter the interval from its lower bound $M-t$.
    We see from \eqref{eq:ToyExEValueCondition}, taking into account the condition $t>0$, that a new eigenvalue emerges form $\lambda=M$ whenever
    \begin{equation}
      \label{eq:ToyExExactEV1}
       t = t_n = -M +\sqrt{ M^2 + \frac{n^2\pi^2}{4}}.
    \end{equation}
    Therefore, $T(t)$ has exactly $n$ eigenvalues if $t\in (t_{n-1}, t_n)$ for $n\in \N$ as long as $t_n \le 2M$.
    Since $t< 2M$, the maximal number of eigenvalues in $(M-t, M)$ is 
    $\lfloor \frac{4\sqrt{2} M}{\pi} \rfloor + 1$.
    
    \item $\gamma = -1$.
    All eigenvalues of $T(t)$ in its essential spectral gap must lie in  $\Sigma_{-1,t}\setminus\{\pm(M-t)\} = (-M,\, -M+t) \cup (M-t,\, M)$. By \eqref{eq:Ttransformed} we have that $\sigma(T(t))$ is symmetric with respect to $0$, so it is enough to consider eigenvalues in $(M-t,\, M)$. This time we cannot use Theorem~\ref{thm:MonotonOfEigenvalues}, however, implicit differentiation of \eqref{eq:specToyEx} shows
    \begin{multline*}
       \lambda'(t)
       = -\frac{ (M-t)\nuext(\lambda(t))\left[\nuext(\lambda(t))+\nuint^2(\lambda(t), t, -1)\sec^2\left(\nuint(\lambda(t), t, -1)\right)\right] }{ \lambda(t)\left[t(2M-t)+\nuext(\lambda(t))\nuint^2(\lambda(t), t, -1)\sec^2\left(\nuint(\lambda(t), t, -1)\right)\right] }
       \\
       \times
       \left(\frac{\nuext(\lambda(t))}{\nuint(\lambda(t), t, -1)}\right)^{-1+p}.
    \end{multline*}
    This equality, together with $t<M$, implies that the eigenvalues in $(M-t,M)$ are decreasing, and therefore none can enter from $M-t$. Once again, we see from \eqref{eq:ToyExEValueCondition} that a new eigenvalue emerges from $\lambda = M$ whenever
    \begin{equation}
      \label{eq:ToyExExactEV-1}
       t = t_n = M - \sqrt{ M^2 - \frac{n^2\pi^2}{4}}.
    \end{equation}
    
    Therefore, $T(t)$ has $n$ eigenvalues in $(M-t, M)$ if $t\in (t_{n-1}, t_n)$ for $n\in \N$ as long as the right hand side in the formula above is real. This shows that the maximal number of eigenvalues in $(M-t, M)$ is 
    $\lfloor \frac{2 M}{\pi} \rfloor + 1$.    
\end{enumerate}

\begin{remark}
Without the explicit calculations of Example~\ref{ex:Analytic} above, we can at least say the following, using only
Theorem~\ref{thm:teoremafavorito} and Corollary~\ref{cor:numberEV}.

Let $I = [-1,\, 1]$ and let $B_I B_I^* = - \frac{\rd^2}{\rd x^2}$ with Dirichlet boundary conditions.
Its eigenvalues are $\beta_{n,I} = \left( \frac{n\pi}{2} \right)^2$ for $n\in \N$. Note that $\Msup_{1,I}(t) = M-t$ is independent of the value of $\gamma$ and that $m_{2,I}(t) = M+\gamma t$.
   
   With the estimate \eqref{eq:upperboundInterval} from Theorem~\ref{thm:teoremafavorito} we obtain that, for $n\in \N$,
   \begin{equation}
       \label{eq:ToyExUpperbound}
       \lambda_n(t)
       \le 
       \Lambda_{\gamma, n}(t)
       \coloneqq \frac{1}{2} \left( -t(1+\gamma) + \sqrt{ \left[2M-t(1-\gamma)\right]^2 + (n\pi)^2 } \right).
   \end{equation}
   Since $\Lambda_{\gamma,n}(t)$ is decreasing in $t$, we can guarantee that $T(t)$ has at least $n$ eigenvalues in $[M-t,M)$ for all $t$ such that $\Lambda_{\gamma, n}(t)<M$. More precisely:

\begin{enumerate}[label={\upshape (\roman*)}]
   \item $\gamma =0$.
   As $t$ increases from $0$ to $2M$, the $n$th eigenvalue must have entered $[M-t, M)$ before
   \begin{equation*}
       t = \widetilde t_{n} = \frac{n^2\pi^2}{8M}.
   \end{equation*}
   Therefore, $T(t)$ has at least $n$ eigenvalues in $[M-t,\, M)$ if $t\in (\widetilde t_n,\, \widetilde t_{n+1})$ as long as $\widetilde t_n < 2M$.
      
   \item $\gamma = 1$.
   As $t$ increases from $0$ to $2M$, the $n$th eigenvalue must have entered $[M-t, M)$ before
   \begin{equation*}
       t = \widetilde t_n 
       = -M + \sqrt{ M^2 + \left( \frac{n\pi}{2} \right)^2 } \, .
   \end{equation*}
   Therefore, $T(t)$ has at least $n$ eigenvalues in $[M-t,\, M)$ if $t\in (\widetilde t_n,\, \widetilde t_{n+1})$ as long as $\widetilde t_n < 2M$.
   
   \item $\gamma = -1$.
   As $t$ increases from $0$ to $M$, the $n$th eigenvalue must have entered $[M-t, M)$ before
   \begin{equation*}
       t = \widetilde t_n 
       = M - \sqrt{ M^2 - \left( \frac{n\pi}{2} \right)^2 }\, .
   \end{equation*}
   Therefore, $T(t)$ has at least $n$ eigenvalues in $[M-t,\, M)$ if $t\in (\widetilde t_n,\, \widetilde t_{n+1})$ as long as $\widetilde t_n < M$.

\end{enumerate}
Equivalently, we can use Corollary~\ref{cor:numberEV} to estimate the number of eigenvalues of $T(t)$ in $[M-t, M)$ by
\begin{equation*}
  \dim\mL_{[M-t, M)}T(t) 
  \ge  \ceil{\frac{2}{\pi} \sqrt{ t\left(2M+\gamma t\right) }} -1 .
\end{equation*}
From here we find a lower bound for the maximum possible number of eigenvalues as
\begin{equation*}
     \lim_{t\to(2M)^-}\dim\mL_{[M-t, M)}T(t) 
     \ge 
     \begin{cases}
        \floor{\frac{4M}{\pi} },\quad
        &\text{if } \gamma = 0, \\[2ex]
        \floor{\frac{4\sqrt{2} M}{\pi} },\quad
        &\text{if } \gamma = 1,
     \end{cases}
\end{equation*}
and
\begin{equation*}
     \lim_{t\to M^-}\dim\mL_{[M-t, M)}T(t) 
     \ge \floor{\frac{2M}{\pi} },
     \quad \text{if } \gamma = -1.
\end{equation*}
Note that in all three cases $\widetilde t_{n} = t_{n}$ from 
\eqref{eq:ToyExExactEV0}, \eqref{eq:ToyExExactEV1}, \eqref{eq:ToyExExactEV-1}
and that, for any given $t$, the guaranteed number of eigenvalues provided by Theorem~\ref{thm:teoremafavorito} and Corollary~\ref{cor:numberEV} is one less than the actual number of eigenvalues.
\end{remark}

Figure~\ref{fig:analyticexample} shows the eigenvalue trajectories 
and their corresponding upper bounds given in \eqref{eq:specToyEx} and \eqref{eq:ToyExUpperbound} respectively, for $M=4$ and $\gamma\in\{0,1,-1\}$.
\end{example}

\begin{figure}[!hb]
\begin{subfigure}{.5\linewidth}
\centering
\includegraphics[width=\linewidth]{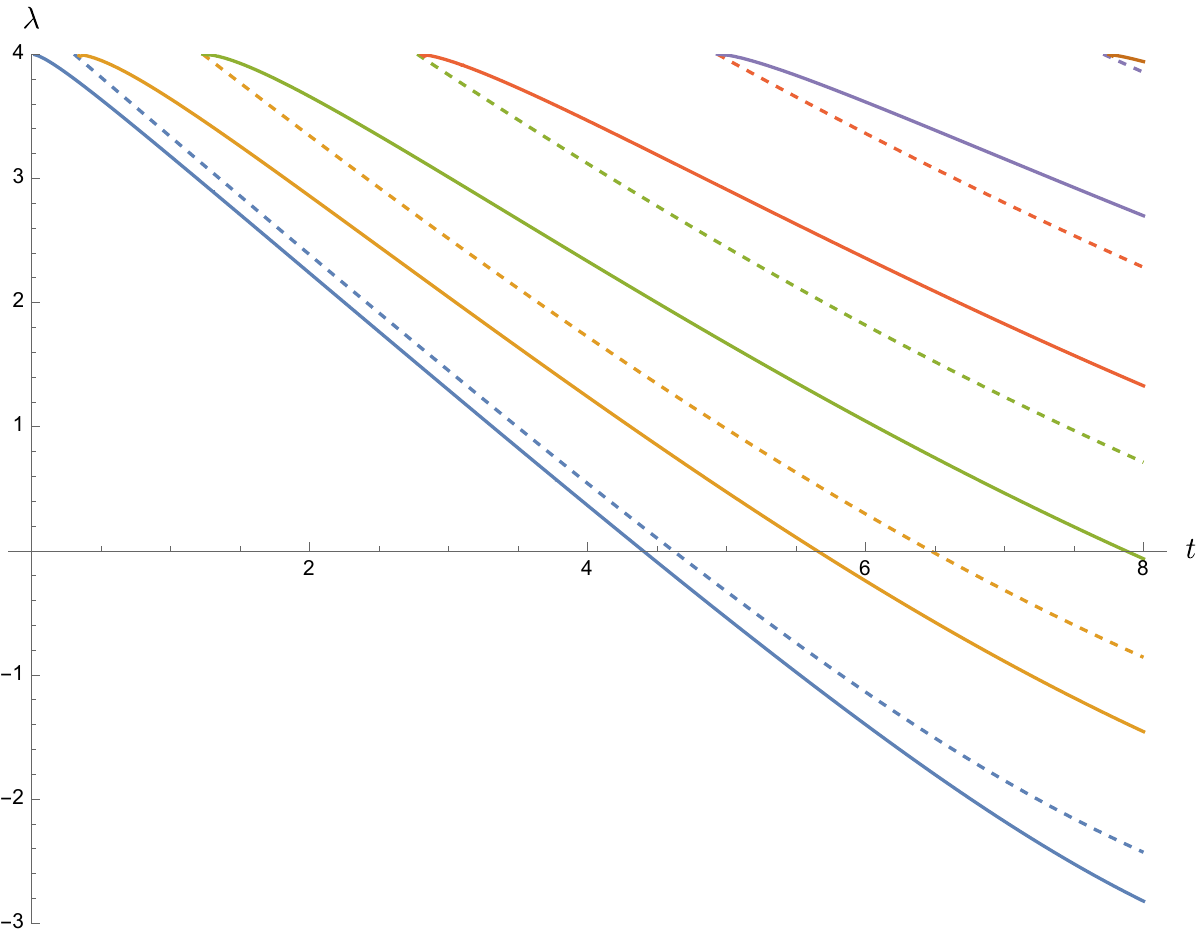}
\caption{$M=4$, $\gamma=0$.}
\end{subfigure}%
\begin{subfigure}{.5\linewidth}
\centering
\includegraphics[width=\linewidth]{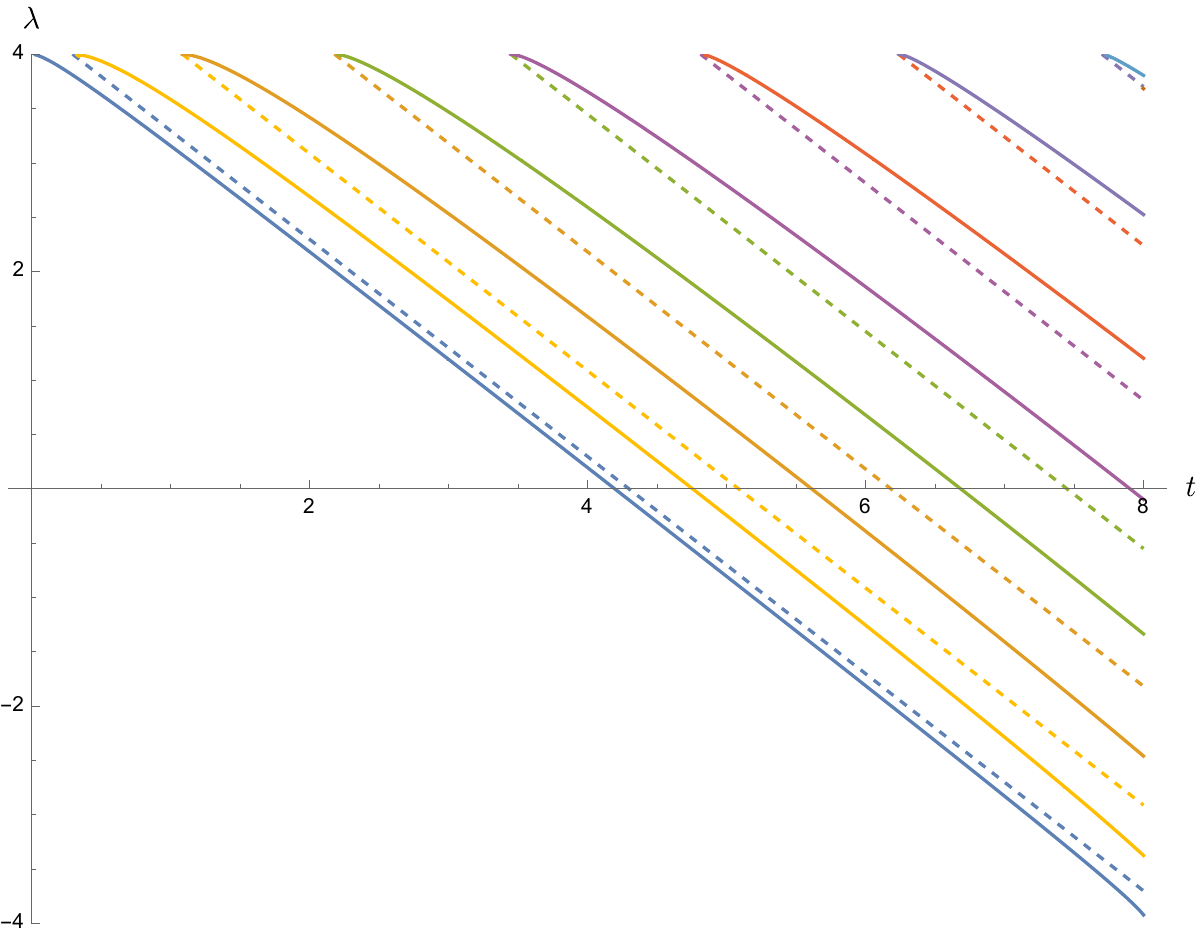}
\caption{$M=4$, $\gamma=1$.}
\end{subfigure}\\[1ex]
\begin{subfigure}{\linewidth}
\centering
\includegraphics[width=0.5\linewidth]{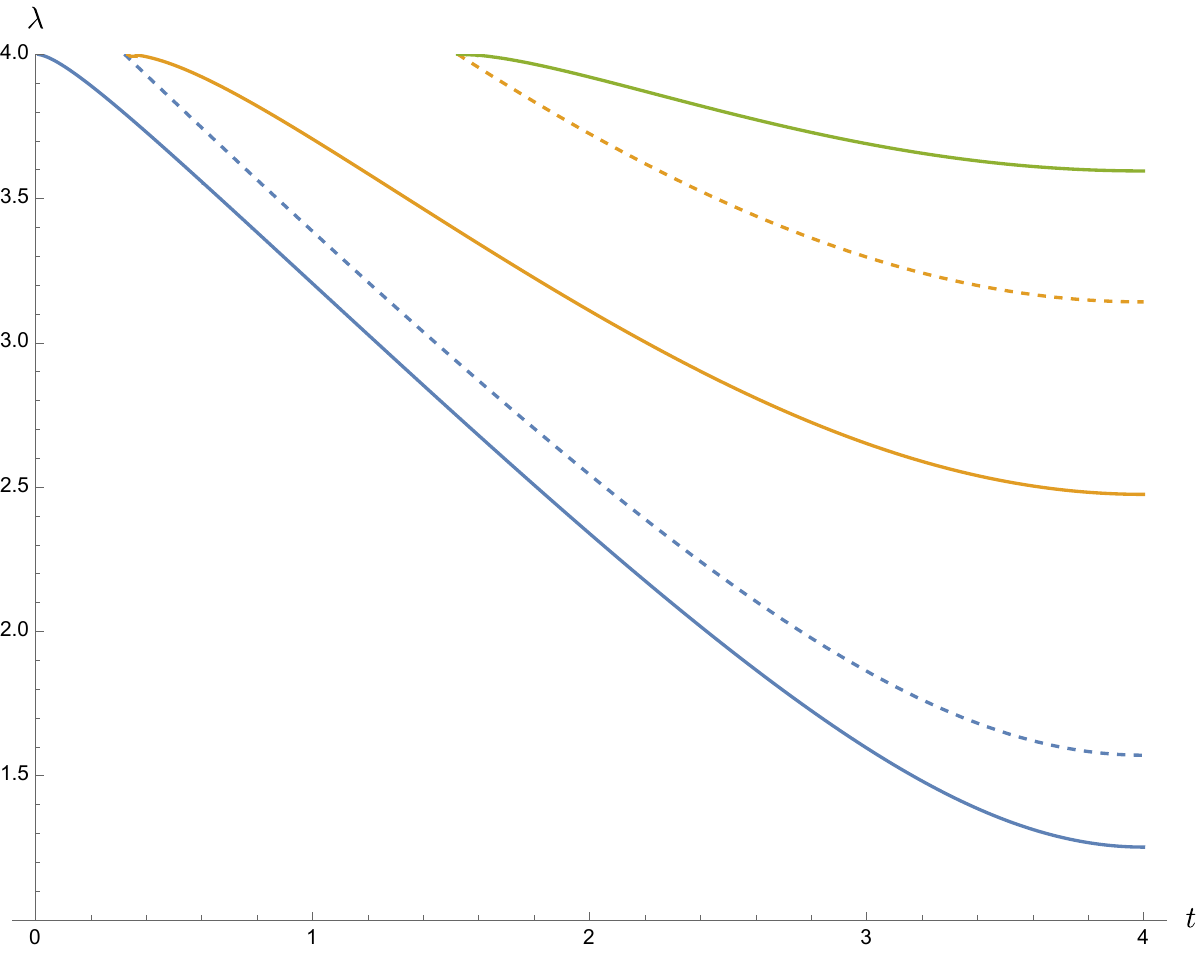}
\caption{$M=4$, $\gamma=-1$.}
\end{subfigure}
\caption{Eigenvalues (continuous) and their corresponding upper bounds (dashed) from Example~\ref{ex:Analytic} for $M=4$ and $\gamma\in\{0,1,-1\}$.}
\label{fig:analyticexample}
\end{figure}

\clearpage

\begin{example}[Numerical example: one-dimensional hydrogenic Dirac operator]
\label{ex:Shadi}
We apply our results to the one-dimensional hydrogen-type Dirac operator studied in 
\cite{DasguptaKhuranaTahvildarZadeh2023}. 
The functions $M_j$ and $W$ are
\begin{equation*}
   W(x)=0,\quad 
   M_1(x,t)=M-\frac{t}{2}e^{-\abs{x}},\quad M_2(x,t)=M+\frac{t}{2}e^{-\abs{x}},
\end{equation*}
for some $M>0$ and $t\in(0,4M)$. 
To the best of our knowledge, the eigenvalues cannot be computed analytically.
In \cite{DasguptaKhuranaTahvildarZadeh2023}, the authors compute the eigenvalues numerically through an associated dynamical system. 
\smallskip

Here we use the variational principle presented in Theorem~\ref{thm:varprinciple:abstract} to calculate them numerically.
Note that in this example we have $\lambda_{e+}(t)=m_2(t)=M$ and $m_1(t)=M-\frac{t}{2}$. Theorem~\ref{thm:IntegrabilityCond} guarantees the existence of at least one eigenvalue in $[m_1(t),\lambda_{e+}(t))$ and Theorem~\ref{thm:MonotonOfEigenvalues} tells us that those eigenvalues are decreasing in $t$. In Figure~\ref{fig:Shadi} we show the eigenvalue trajectories for $M=1$. 
Our plot reproduces the upper-left corner of \cite[Fig. 5]{DasguptaKhuranaTahvildarZadeh2023}, however we cannot go further than $t=4$ because of the condition \ref{item:A2}. 
\end{example}

\begin{figure}[!hb]
\begin{subfigure}{.35\linewidth}
\centering
\includegraphics[width=\linewidth]{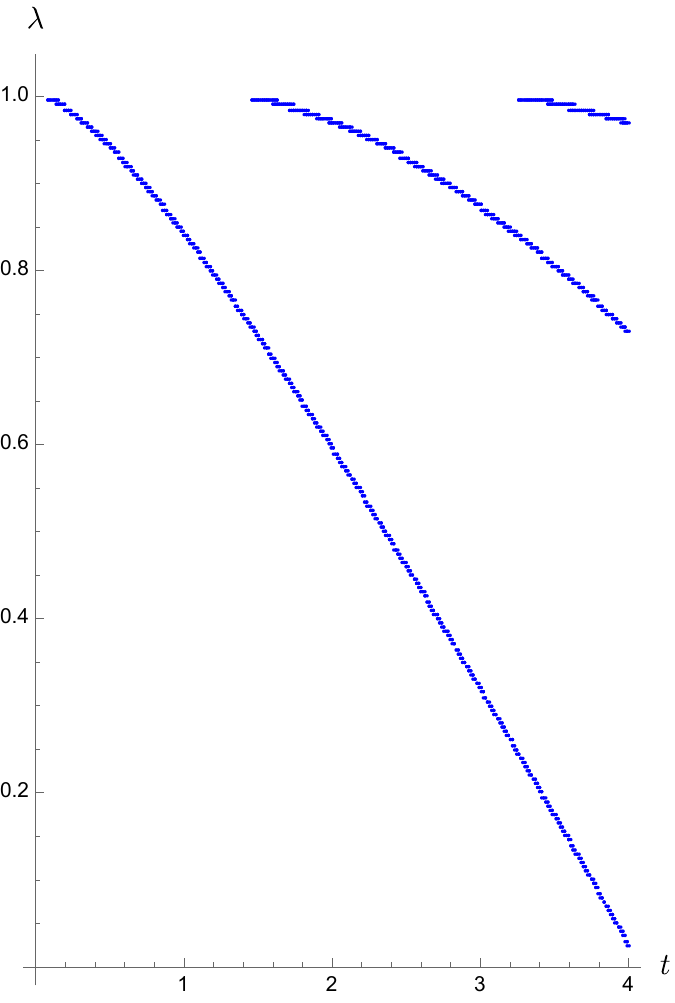}
\caption{Eigenvalues computed using the variational principle from Theorem~\ref{thm:varprinciple:abstract}.}
\end{subfigure}%
\begin{subfigure}{.65\linewidth}
\centering
\includegraphics[width=\linewidth]{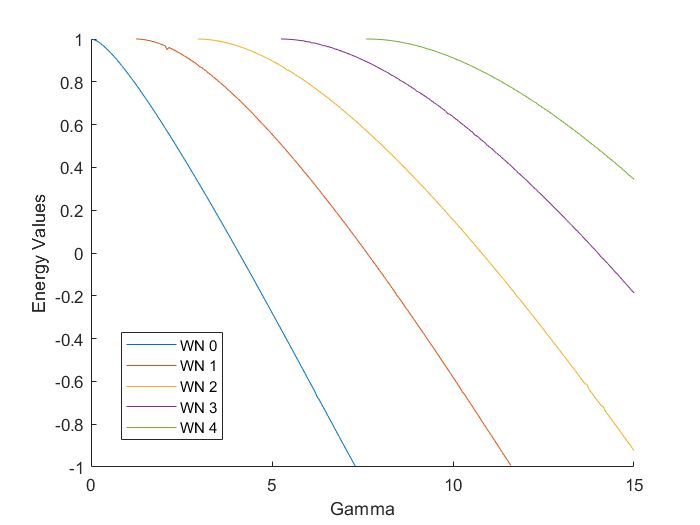}
\caption{Figure 5 of \cite{DasguptaKhuranaTahvildarZadeh2023}.}
\end{subfigure}%
\caption{Numerically computed eigenvalue trajectories from Example~\ref{ex:Shadi} for $M=1$. The parameter $t$ in (a) corresponds to Gamma in (b).}
\label{fig:Shadi}
\end{figure}



\section{The Dirac operator on the halfline}
\label{sec:DiracHalfLine}

Let us briefly discuss the Dirac operator on the halfline $\R_+$.
Recall from Proposition~\ref{prop:defT} that we need to specify a boundary condition at $x=0$ in order to obtain a selfadjoint realization of $\tau$ on the halfline. To simplify notation, we write $T_\alpha$ instead of $T_{\alpha+}$ since in this section we only work on $\R_+$.
Recall that the boundary value parameter is $\alpha\in [0,\pi)$.
Moreover, in this section we set $m_j \coloneqq \inf_{x\in\R_+} M_j(x)$.
\smallskip

For $\alpha\in\{0,\frac{\pi}{2}\}$, the domain of $T_\alpha$ is a direct sum given by
\begin{align*}
   \mD(T_{0}) 
   & = \left\{ 
   \Psi \in H^1(\R_{+})^2 \ :\
   \Psi(0) = c
   \begin{pmatrix}
      1 \\ 0
   \end{pmatrix}\text{ for some } c\in\C
   \right\}
   =
   H^1(\R_{+}) \oplus H_0^1(\R_{+}),\\
   \mD(T_{\frac{\pi}{2}}) 
   & = \left\{ 
   \Psi \in H^1(\R_{+})^2 \ :\
   \Psi(0) = c
   \begin{pmatrix}
      0 \\ 1
   \end{pmatrix}\text{ for some } c\in\C
   \right\}
   =
   H_0^1(\R_{+}) \oplus H^1(\R_{+}),
\end{align*}

and $T_\alpha$ can be written as the block matrix
\begin{equation*}
   T_\alpha =
   \begin{pmatrix}
       M_1 & B_\alpha \\  B_\alpha^* & -M_2
   \end{pmatrix},
\end{equation*}
where 
\begin{equation*}
    B_\alpha = \frac{\rd}{\rd x} + W,
    \qquad
    B_\alpha^* = -\frac{\rd}{\rd x} + W,
\end{equation*}
have domains
\begin{equation*}
   \mD(B_0) = \mD(B_{\frac{\pi}{2}}^*) = H^1_0(\R_+),
   \qquad
   \mD(B_{0}^*) = \mD(B_{\frac{\pi}{2}})  = H^1(\R_+).
\end{equation*}

For all other cases of $\alpha$ the domain of $T_\alpha$ cannot be written as the direct sum of subspaces of $L_2(\R_+)\oplus\{0\}$ and $\{0\}\oplus L_2(\R_+)$ and therefore the variational principle presented in Section~\ref{sec:varprinciple} cannot be applied.

\begin{remark}
    Even if the variational principle does not apply for $\alpha \notin \{0, \frac{\pi}{2}\}$, we can use it to obtain bounds for the eigenvalues. 
    It suffices to note that the eigenvalues, if they exist, are decreasing as a function of $\alpha\in[0,\pi)$ with
    $\frac{\rd \lambda}{\rd\alpha} = -\|\Psi_{\lambda,\alpha}\|^{-2}$
    where $\Psi_{\lambda,\alpha}$ is the eigenfunction for the eigenvalue $\lambda$ and boundary condition
    $\Psi_{\lambda, \alpha}(0) = \begin{pmatrix} \cos\alpha \\ \sin\alpha \end{pmatrix}$,
    see \cite[Sec. 15.6]{weidmannII}.
\end{remark}

Let us consider the case $\alpha=0$.
The case $\alpha=\frac{\pi}{2}$ can be dealt with similarly.
\smallskip

According to Proposition~\ref{prop:essspecT}, the  essential spectrum of $T_0$ is 
$\essspec(T_0) = (-\infty,\, \lambda_+^-]\cup [\lambda_+^+,\, \infty)$. Clearly, $T_0$ satisfies all the assumptions from Section~\ref{sec:varprinciple}, 
in particular, the form $\mfs_0(\lambda)$ is given by 
\begin{equation*}
    \mfs_0(\lambda)[\phi] = 
    \scalar{ (M_1-\lambda)\phi }{\phi}
    +  \scalar{ (M_2+\lambda)^{-1} B_0^* \phi }{ B_0^* \phi}
\end{equation*}
for all $\re \lambda > -m_2$ and $\phi\in\mD(\mfs_0) = H^1(\R_+)$.  Moreover, from Corollary~\ref{cor:resolventab} we obtain that $(-m_2,\, m_1)\subseteq \rho(T_0)$.
\smallskip

For the eigenvalues of $T_0$ in $[m_1,\, \lambda_+^+)$, 
Theorems~\ref{thm:teoremafavoritoglobal} and \ref{thm:teoremafavorito} and their corollaries hold with the obvious modifications. They can also be used for eigenvalues in $(\lambda_+^-,-m_2]$ together with \eqref{eq:Ttransformed} or \eqref{eq:SchurD}, however some care is needed with the domains since $\mD(B_0) \neq \mD(B_0^*)$. 
\smallskip

Theorem~\ref{thm:IntegrabilityCond} on the existence of an eigenvalue in $[m_1,\,\lambda_+^+)$ holds if integration over $\R$ is replaced by integration over $\R_+$. For the proof, the same functions $\phi_n$ restricted to $\R_+$ can be used. Also Theorem~\ref{thm:aEqualsEssspec} can be adapted. Note however, that the proofs are not valid for eigenvalues in $(\lambda_+^-,\, -m_2]$ because the restrictions of the functions $\phi_n$ to $\R_+$ do not belong to $\mD(B_0)$. 
\smallskip

We finish this section with an example that can be solved analytically.
\begin{example}[Analytic example]
\label{ex:AnalyticHalfline}
We consider the Example~\ref{ex:Analytic} but restricted to the halfline $\R_+$.

Let
\begin{equation*}
    W(x) = 0,\quad
    M_1(x,t) = M - t \chi_{[0,1]}(x), \quad
    M_2(x,t) = M + \gamma t \chi_{[0,1]}(x)
\end{equation*}
for some $M>0$ and $\gamma \in \{ 0, 1, -1\}$.
\smallskip

In order to satisfy \ref{item:A2}, we 
have to restrict the parameter $t$ to
$t\in [0, 2M)$ if $\gamma \in \{0,1\}$
and $t\in [0, M)$ if $\gamma = -1$. Then it follows from Proposition~\ref{prop:essspecT} that
\begin{equation*}
   \essspec(T_\alpha(t)) = (-\infty, -M]\cup [M, \infty),\qquad\alpha\in[0,\pi).
\end{equation*}
In order to solve the eigenvalue equation analytically, we 
rewrite $(T_\alpha(t) - \lambda)\Psi = 0$ as the system
\begin{equation*}
    (M_1 - \lambda) \psi_1 + \psi_2' = 0,
    \qquad
    -\psi_1' + (-M_2 - \lambda) \psi_2 = 0.
\end{equation*}
As before, we obtain for $\psi_1$ the differential equation $\psi_1''  =  (M_1 - \lambda)(M_2 + \lambda) \psi_1$, which we write as
\begin{equation*}
    \psi_1''  =  
    \begin{cases}
       \nuext^2(\lambda) \psi_1,\quad &\text{if}\ x > 1,\\
       - \nuint^2(\lambda, t, \gamma) \psi_1,\quad &\text{if}\ 0 < x < 1,
    \end{cases}
\end{equation*}
with
$\nuext(\lambda) = \sqrt{M^2-\lambda^2}$ and $\nuint(\lambda,t,\gamma) = \sqrt{(\lambda-M+t)(\lambda+M+\gamma t)}$. 
This time, however, we also have the boundary condition at $x=0$
\begin{equation*}
   \psi_1'(0) =  -(M+\gamma t + \lambda)\tan(\alpha) \psi_1(0).
\end{equation*}
We note that $\psi_1\in L_2(\R_+)$ immediately implies that any eigenvalue $\lambda$ of $T_\alpha(t)$ satisfies $\nuext(\lambda)>0$, or equivalently $\abs{\lambda}<M$.
\smallskip

\begin{remark}\label{rem:EigenvalueAlpha}
   The operator $T_\alpha(0)$ has no eigenvalue if $\alpha = 0$ or $\alpha \in [\frac{\pi}{2}, \pi)$.
   It has exactly one eigenvalue $\lambda(\alpha)$ for every $\alpha\in (0,\, \frac{\pi}{2})$; it is $\lambda(\alpha)=M\cos(2\alpha)$.
\end{remark}
\begin{proof}
If $t=0$, then the eigenvalue equation for $\psi_1$ is $\psi_1'' = (M^2-\lambda^2)\psi_1$ whose only square integrable solution is $\psi_1(x) = A e^{-\nuext(\lambda) x}$.
Now the boundary condition at $x=0$ reads
$$\nuext(\lambda) = (M+\lambda)\tan\alpha.$$
Since $\nuext(\lambda)$ and $M+\lambda$ are positive and finite, the previous equation has a solution only if $\alpha\in (0, \frac{\pi}{2})$. Solving for $\lambda$ gives the claim.
\end{proof}

From now on let us assume $t>0$. Solving for $\psi_1$ leads us to
\begin{equation*}
    \psi_1(x) = 
    \begin{cases}
        A e^{-\nuext(\lambda) x},\quad & \text{if}\ x > 1,\\
        B \sin(\nuint(\lambda, t, \gamma) x) 
        + C \cos(\nuint(\lambda, t, \gamma) x),\quad & \text{if}\ 0 < x < 1.
    \end{cases}
\end{equation*}
Continuity of $\psi_1,\, \psi_1' $ at $x=1$ and the boundary condition at $x=0$ give
\begin{align*}
   A e^{-\nuext(\lambda)}&= B \sin(\nuint(\lambda, t, \gamma) )
    + C \cos(\nuint(\lambda, t, \gamma)),
    \\
      -A \nuext(\lambda)e ^{-\nuext(\lambda)}&=\nuint(\lambda, t, \gamma)\left[B\cos(\nuint(\lambda, t, \gamma) )
    - C \sin(\nuint(\lambda, t, \gamma))\right],
    \\
    B \nuint(\lambda, t, \gamma) &= -C (M+\gamma t + \lambda) \tan\alpha,
\end{align*}
from which follows that $\lambda\in (-M,M)$ is an eigenvalue of $T_\alpha(t)$ if and only if $\nuint(\lambda, t, \gamma)>0$ and
\begin{equation}
\label{eq:BChalfline}
    \nuext(\lambda) 
     = 
    \nuint(\lambda, t, \gamma)
    \frac{ \tan(\nuint(\lambda, t, \gamma) )
    + (M+\gamma t + \lambda) \nuint(\lambda, t, \gamma)^{-1} \tan\alpha   }{
    1
    - \tan(\nuint(\lambda, t, \gamma))(M+\gamma t+\lambda) \nuint(\lambda, t, \gamma)^{-1} \tan\alpha}.
\end{equation}
We remark that for $\alpha = 0$ we get from \eqref{eq:BChalfline} the eigenvalues of \eqref{eq:specToyEx} with $p=1$,
and for $\alpha=\frac{\pi}{2}$ we get the eigenvalues of \eqref{eq:specToyEx} with $p=-1$.
\smallskip

To see when a new eigenvalue enters the essential spectral gap from $M$ we set $\lambda=M$ in \eqref{eq:BChalfline}.
Then the left hand side vanishes, so the numerator of the right hand side must also vanish
(by multiplying both numerator and denominator by $\cos(\nuint(\lambda,t,\gamma))\cos\alpha$ we can easily discard the possibility of the denominator being $\pm\infty$).
This means that a new eigenvalue enters the spectral gap from $M$ whenever $t$ satisfies (see Figure \ref{fig:neweigenvalue})
\begin{equation}
\label{eq:neweigenvalue}
  \tan \left(\sqrt{ t(2M+\gamma t)}\right) = 
  - \sqrt{\frac{ 2M + \gamma t }{ t }}\, \tan\alpha.
\end{equation}

\begin{figure}[!hb]
\begin{subfigure}{.48\linewidth} 
\begin{tikzpicture}[scale=.8, transform shape]
\begin{axis}[ymin=-20, ymax=35, xmin=-0.15, xmax=8.5,
axis x line=center,
axis y line=center,
xlabel={$t$},
xlabel style={right},
xtick={4,8},
ytick=\empty,
legend style={
   anchor=east,legend columns=3},
legend style={font=\tiny},
declare function={ 
rhs(\t,\MM,\GG,\AA) = -sqrt( 2*\MM/\t + \GG)*tan(\AA r);
lhs(\t,\MM,\GG) = tan( sqrt( \t*(2*\MM + \t*\GG) ) r );
}
]
\pgfplotsset{
    tick label style={ font=\footnotesize},
}

\addplot[blue!80!red, domain=0:8, samples=300]{rhs(x, 4,0,0)};
\addplot[blue!60!red, domain=0:8, samples=300]{rhs(x, 4,0, pi/4)};
\addplot[blue!20!red, domain=0:8, samples=300]{rhs(x, 4,0,3/8*pi)};
\addplot[blue!10!red, domain=0:8, samples=300]{rhs(x, 4,0, 5/12*pi)};
\addplot[red!90!green, domain=0:8, samples=300]{rhs(x, 4,0, pi/2+1/16*pi)};
\addplot[red!80!green, domain=0:8, samples=300]{rhs(x, 4,0, pi/2+1/8*pi)};
\addplot[red!40!green, domain=0:8, samples=300]{rhs(x, 4,0, pi/2+3/8*pi)};

\addplot[black, domain=0:1, samples=300, restrict y to domain=-50:50] {lhs(x, 4,0)};
\addplot[black, domain=1:3, samples=300, restrict y to domain=-50:50] {lhs(x, 4,0)};
\addplot[black, domain=3:8, samples=300, restrict y to domain=-50:50] {lhs(x, 4,0)};
\end{axis}

\end{tikzpicture}

\caption{$\gamma=0$.}
\end{subfigure}
\begin{subfigure}{.48\linewidth}  
\begin{tikzpicture}[scale=.8, transform shape]
\begin{axis}[ymin=-20, ymax=35, xmin=-0.15, xmax=8.5,
axis x line=center,
axis y line=center,
xlabel={$t$},
xlabel style={right},
xtick={4,8},
ytick=\empty,
legend style={
   anchor=east,legend columns=3},
legend style={font=\tiny},
declare function={ 
rhs(\t,\MM,\GG,\AA) = -sqrt( 2*\MM/\t + \GG)*tan(\AA r);
lhs(\t,\MM,\GG) = tan( sqrt( \t*(2*\MM + \t*\GG) ) r );
}
]
\pgfplotsset{
    tick label style={ font=\footnotesize},
}

\addplot[blue!80!red, domain=0:8, samples=300]{rhs(x, 4,1,0)};
\addplot[blue!60!red, domain=0:8, samples=300]{rhs(x, 4,1, pi/4)};
\addplot[blue!20!red, domain=0:8, samples=300]{rhs(x, 4,1,3/8*pi)};
\addplot[blue!10!red, domain=0:8, samples=300]{rhs(x, 4,1, 5/12*pi)};
\addplot[red!90!green, domain=0:8, samples=300]{rhs(x, 4,1, pi/2+1/16*pi)};
\addplot[red!80!green, domain=0:8, samples=300]{rhs(x, 4,1, pi/2+1/8*pi)};
\addplot[red!40!green, domain=0:8, samples=300]{rhs(x, 4,1, pi/2+3/8*pi)};

\addplot[black, domain=0:1, samples=300, restrict y to domain=-50:50] {lhs(x, 4,1)};
\addplot[black, domain=1:3, samples=300, restrict y to domain=-50:50] {lhs(x, 4,1)};
\addplot[black, domain=3:8, samples=300, restrict y to domain=-50:50] {lhs(x, 4,1)};
\end{axis}

\end{tikzpicture}

\caption{$\gamma=1$.}
\end{subfigure}

\begin{subfigure}{.48\linewidth}
\begin{tikzpicture}[scale=.8, transform shape]
\begin{axis}[ymin=-20, ymax=35, xmin=-.15, xmax=4.5,
axis x line=center,
axis y line=center,
xlabel={$t$},
xlabel style={right},
xtick={2,4},
ytick=\empty,
legend style={at={(1.4,0.6)},
   anchor=north west,legend columns=3},
legend style={font=\small},
declare function={ 
rhs(\t,\MM,\GG,\AA) = -sqrt( 2*\MM/\t + \GG)*tan(\AA r);
lhs(\t,\MM,\GG) = tan( sqrt( \t*(2*\MM + \t*\GG) ) r );
}
]
\pgfplotsset{
    tick label style={ font=\footnotesize},
}

\addplot[blue!80!red, domain=0:4, samples=300]{rhs(x, 4,-1,0)};
\addlegendentry{\footnotesize $\alpha=0$}
\addplot[blue!60!red, domain=0:4, samples=300]{rhs(x, 4,-1, pi/4)};
\addlegendentry{\footnotesize $\alpha=\frac{\pi}{4}$}
\addplot[blue!20!red, domain=0:4, samples=300]{rhs(x, 4,-1,3/8*pi)};
\addlegendentry{\footnotesize $\alpha=\frac{3\pi}{8}$}
\addplot[blue!10!red, domain=0:4, samples=300]{rhs(x, 4,-1, 5/12*pi)};
\addlegendentry{\footnotesize $\alpha=\frac{5\pi}{12}$}
\addplot[red!90!green, domain=0:4, samples=300]{rhs(x, 4,-1, pi/2+1/16*pi)};
\addlegendentry{\footnotesize $\alpha=\frac{9\pi}{16}$}
\addplot[red!80!green, domain=0:4, samples=300]{rhs(x, 4,-1, pi/2+1/8*pi)};
\addlegendentry{\footnotesize $\alpha=\frac{5\pi}{8}$}
\addplot[red!40!green, domain=0:4, samples=300]{rhs(x, 4,-1, pi/2+3/8*pi)};
\addlegendentry{\footnotesize $\alpha=\frac{7\pi}{8}$}

\addplot[black, domain=0:4, samples=300, restrict y to domain=-50:50] {lhs(x, 4,-1)};
\end{axis}

\end{tikzpicture}

\caption{$\gamma=-1$.}
\end{subfigure}

\caption{Plot of the left hand side (black) and right hand side (colors) of \eqref{eq:neweigenvalue} for various values of $\alpha$ and $\gamma\in\{0,1,-1\}$. 
At any $t$ where a curve for fixed $\alpha$ intersects the black curve, a new eigenvalue enters the spectral gap from $M$.
}
\label{fig:neweigenvalue}
\end{figure}
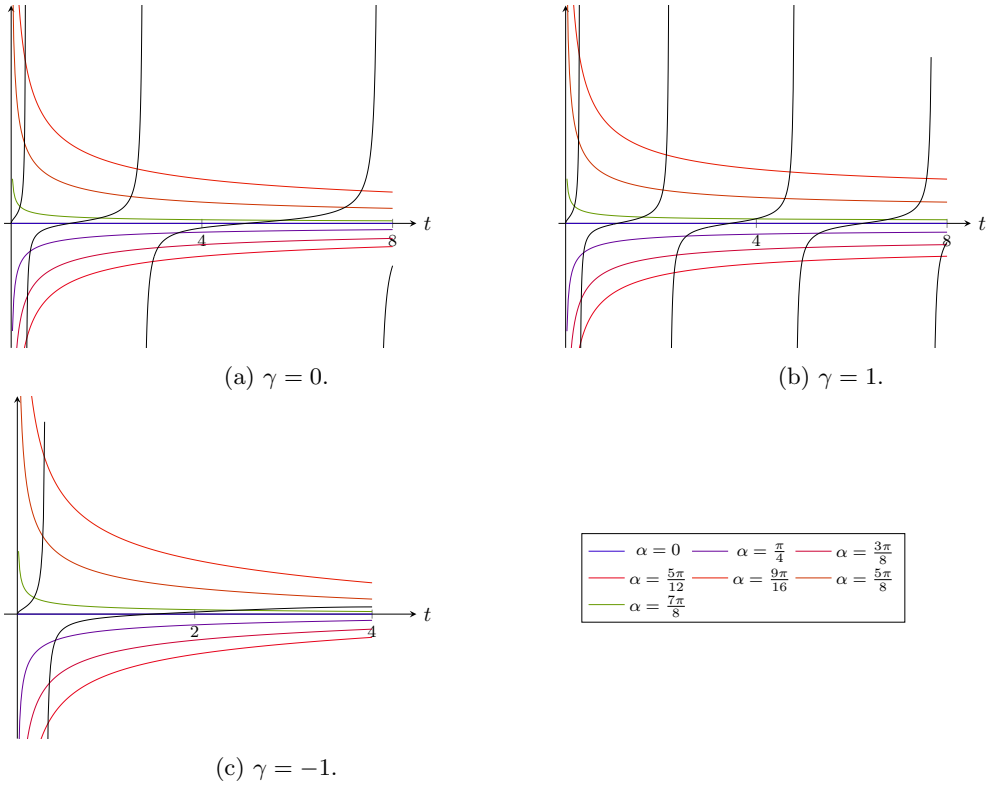

For $\gamma\in \{0, 1\}$, the condition $\nuint(\lambda,t,\gamma)>0$ implies that $T_\alpha(t)$ can only have eigenvalues in $(M-t,M)$. Moreover, Theorem~\ref{thm:MonotonOfEigenvalues} tells us that these eigenvalues are decreasing in $t$, so once they enter the gap from $M$, they decrease but they cannot touch $M-t$. If $\alpha = 0$, then a new eigenvalue enters the essential spectral gap from $M$ if $t(2M+\gamma t) \in \{ ( n\pi)^2: n\in\N_0\}$.
If $\alpha = \frac{\pi}{2}$, then a new eigenvalue enters the essential spectral gap from $M$ if $t(2M+\gamma t) \in \{ ( (n+\frac{1}{2})\pi )^2: n\in\N_0\}$.
These are the values from \eqref{eq:ToyExEValueCondition} for even respectively odd $n\in\N_0$.
When $\alpha$ increases from $0$ to $\pi$, the corresponding $t_n(\alpha)$ is mapped by the increasing function $t\mapsto t(2M+\gamma t)$ 
from $(n\pi)^2$ to $((n + 1)\pi)^2$
with $ t_n(\frac{\pi}{2}) \mapsto((n+ \frac{1}{2})\pi)^2$. This was to be expected since the eigenvalues are monotonically decreasing in the boundary condition parameter $\alpha$. Note also that for $\alpha \in (0, \frac{\pi}{2})$ the first eigenvalue does not enter from the essential spectrum, but is already present for $t=0$ at $\lambda(\alpha) = M\cos(2\alpha)$.
\smallskip

For $\gamma = -1$, a new eigenvalue enters the essential spectral gap from $-M$ when
\begin{equation*}
  \tan \left(\sqrt{ t(2M - t)}\right) = 
  \sqrt{\frac{ t }{ 2M - t }}\, \tan\alpha.
\end{equation*}
\end{example}

\clearpage

\bibliography{lit}
\bibliographystyle{alpha}

\end{document}